\newcommand{\NN}{{\mathbb N}} 
\newcommand{\sym}{\mathcal{S}} 
\newcommand{\diag}[2]{\left(\begin{smallmatrix}0&{#2}\\ {#1}&0\end{smallmatrix}\right)}
\newcommand{\di}{\text{\ensuremath{\diamond}}}
\newcommand{\eL}{\text{L}} 
\newcommand{\cF}{\mathcal{F}}
\newcommand{\cFk}{\mathcal{F}^{(k)}}
\newcommand{\cM}{\mathcal{M}}
\newcommand{\cMk}{\mathcal{M}^{(k)}}
\newcommand{\mcab}{M_{312}} 
\newcommand{\mbca}{M_{231}} 
\newcommand{\rev}{\overline} 
\newcommand{\cC}{\mathcal{C}} 
\newcommand{\new}{{\text{new}}}
\newcommand{\id}{\text{id}}   
\DeclareMathOperator{\st}{\mathrm{st}}
\theoremstyle{plain}
\newtheorem{theorem}{Theorem}[section]
\newtheorem*{theorem*}{Theorem}
\newtheorem*{corollary*}{Corollary}
\newtheorem{lemma}[theorem]{Lemma}
\newtheorem*{lemma*}{Lemma}
\newtheorem{proposition}[theorem]{Proposition}
\newtheorem*{proposition*}{Proposition}
\newtheorem*{conjecture*}{Conjecture}
\newtheorem{fact}[theorem]{Fact}
\theoremstyle{definition}
\newtheorem{definition}[theorem]{Definition}
\newtheorem*{definition*}{Definition}
\newtheorem*{example*}{Example}
\newtheorem*{problem*}{Problem}
\newtheorem{observation}[theorem]{Observation}
\theoremstyle{remark}
\newtheorem*{remark*}{Remark}
\title[Pattern avoidance in partial permutations]{Pattern avoidance in
  partial permutations}
\author[A. Claesson]{Anders Claesson} 
\address{The Mathematics Institute, School of Computer Science,
  Reykjavik University, Menntavegur 1, IS-101 Reykjavik, Iceland}
\email{anders@ru.is}
\thanks{A. Claesson, V. Jel\'{\i}nek and S. Kitaev were supported by the Icelandic
Research Fund, grant no. 090038011. E. Jel\'{\i}nkov\'a was supported by project
1M0021620838 of the Czech Ministry of Education. The research was conducted
while E. Jel\'{\i}nkov\'a was visiting ICE-TCS, Reykjavik University,
Iceland. }
\author[V. Jel\'{\i}nek]{V\'{\i}t Jel\'{\i}nek} 
\address{The Mathematics Institute, School of Computer Science,
  Reykjavik University, Menntavegur 1, IS-101 Reykjavik, Iceland}
\email{jelinek@kam.mff.cuni.cz}
\author[E. Jel\'{\i}nkov\'a]{Eva Jel\'{\i}nkov\'a} 
\address{Department of Applied Mathematics, Charles University in
  Prague, Malostransk\'e n\'am.  25, 118 00 Praha 1, Czech Republic}
\email{eva@kam.mff.cuni.cz}
\author[S. Kitaev]{Sergey Kitaev} 
\address{The Mathematics Institute, School of Computer Science,
  Reykjavik University, Menntavegur 1, IS-101 Reykjavik, Iceland}
\email{sergey@ru.is}
\begin{document}
\begin{abstract}
Motivated by the concept of partial words, we introduce an analogous
concept of partial permutations. A \emph{partial permutation of length
  $n$ with $k$ holes} is a sequence of symbols
$\pi=\pi_1\pi_2\dotsb\pi_n$ in which each of the symbols from the set
$\{1,2,\dotsc,n-k\}$ appears exactly once, while the remaining $k$
symbols of $\pi$ are ``holes''.

We introduce pattern-avoidance in partial permutations and prove that
most of the previous results on Wilf equivalence of permutation patterns can
be extended to partial permutations with an arbitrary number of holes. We
also show that Baxter permutations of a given length~$k$ correspond to
a Wilf-type equivalence class with respect to partial permutations
with $(k-2)$ holes. Lastly, we enumerate the partial permutations of
length $n$ with $k$ holes avoiding a given pattern of length at most
four, for each $n\ge k\ge 1$.

  \noindent {\bf Keywords:} partial permutation, pattern avoidance,
  Wilf-equivalence, bijection, generating function, Baxter permutation
  \medskip
  
  \noindent{\bf MSC~(2000):} 05A15
\end{abstract}

\maketitle
\thispagestyle{empty}

\section{Introduction}\label{intro}

Let $A$ be a nonempty set, which we call an \emph{alphabet}. A \emph{word}
over $A$ is a finite sequence of elements of $A$, and the \emph{length} of
the word is the number of elements in the sequence. Assume that $\di$
is a special symbol not belonging to $A$. The symbol $\di$ will be
called \emph{a hole}.  A \emph{partial word} over
$A$ is a word over the alphabet $A\cup \{\di\}$. In the study of
partial words, the holes are usually treated as gaps that may be
filled by an arbitrary letter of~$A$. The \emph{length} of a partial
word is the number of its symbols, including the holes.

The study of partial words was initiated by Berstel and
Boasson~\cite{BB}. Partial words appear in comparing genes~\cite{Leu}; alignment
of two sequences can be viewed as a construction of two partial words
that are compatible in the sense defined in~\cite{BB}. Combinatorial
aspects of partial words that have been studied include periods in
partial words~\cite{BB,Shur}, avoidability/unavoidability of sets of partial words
\cite{BBGR,BBKPW}, squares in partial words~\cite{HHK}, and
overlap-freeness~\cite{HHKS}. For more see the book by
Blanchet-Sadri~\cite{B}.

Let $V$ be a set of symbols not containing~$\di$. A \emph{partial
  permutation of $V$} is a partial word $\pi$ such that each symbol of $V$ 
appears in $\pi$ exactly once, and all the remaining symbols of $\pi$ are 
holes. Let $\sym_n^k$ denote the set of all partial permutations of the set 
$[n-k]=\{1,2,\dotsc,n-k\}$ that have exactly $k$ holes. For example, 
$\sym_3^1$ contains the six partial permutations $12\di$, $1\di2$, $21\di$, 
$2\di1$, $\di12$, and $\di21$.  Obviously, all elements of $\sym_n^k$ have 
length~$n$, and $|\sym_n^k|=\binom{n}{k}(n-k)!=n!/k!$. Note that $\sym_n^0$ 
is the familiar symmetric group $\sym_n$. For a set $H\subseteq [n]$ of size 
$k$, we let $\sym_n^H$ denote the set of partial permutations 
$\pi_1\dotsb\pi_n\in \sym_n^k$ such that $\pi_i=\di$ if and only if $i\in H$. 
We remark that our notion of partial permutations is somewhat reminiscent of 
the notion of insertion encoding of permutations, introduced by Albert et 
al.~\cite{albert}. However, the interpretation of holes in the two settings 
is different.


In this paper, we extend the classical notion of pattern-avoiding
permutations to the more general setting of partial permutations.  Let
us first recall some definitions related to pattern avoidance in
permutations. Let $V=\{v_1,\ldots,v_n\}$ with $v_1<\dots<v_n$ be any
finite subset of $\NN$. The \emph{standardization} of a permutation
$\pi$ on $V$ is the permutation $\st(\pi)$ on $[n]$ obtained from
$\pi$ by replacing the letter $v_i$ with the letter~$i$. As an
example, $\st(19452) = 15342$. Given $p\in\sym_k$ and $\pi\in\sym_n$,
an \emph{occurrence} of $p$ in $\pi$ is a subword
$\omega=\pi_{i(1)}\cdots\pi_{i(k)}$ of $\pi$ such that
$\st(\omega)=p$; in this context $p$ is called a \emph{pattern}. If
there are no occurrences of $p$ in $\pi$ we also say that $\pi$ {\em
  avoids}~$p$. Two patterns $p$ and $q$ are called
\emph{Wilf-equivalent} if for each $n$, the number of $p$-avoiding
permutations in $\sym_n$ is equal to the number of $q$-avoiding
permutations in~$\sym_n$.

Let $\pi\in\sym_n^k$ be a partial permutation and let
$i(1)<\dots<i(n-k)$ be the indices of the non-hole elements
of~$\pi$. A permutation $\sigma\in\sym_n$ is an {\em extension} of
$\pi$ if
\[
\st(\sigma_{i(1)}\dotsb\sigma_{i(n-k)}) = \pi_{i(1)}\dotsb\pi_{i(n-k)}.
\]
For example, the partial permutation $2\di1$ has three extensions,
namely $312$, $321$ and $231$. In general, the number of extensions of
$\pi\in \sym_n^k$ is $\binom{n}{k}k!=n!/(n-k)!$.

We say that $\pi\in\sym_n^k$ \emph{avoids the pattern} $p\in
\sym_{\ell}$ if each extension of $\pi$ avoids $p$. For example,
$\pi=32\di 154$ avoids 1234, but $\pi$ does not avoid 123: the
permutation $325164$ is an extension of $\pi$ and it contains two
occurrences of 123. Let $\sym_n^k(p)$ be the set of all the partial
permutations in $\sym_n^k$ that avoid~$p$, and let
$s_n^k(p)=|\sym_n^k(p)|$. Similarly, if $H\subseteq [n]$ is a set of
indices, then $\sym_n^H(p)$ is the set of $p$-avoiding
permutations in $\sym_n^H$, and $s_n^H(p)$ is its cardinality.

We say that two patterns $p$ and $q$ are {\em $k$-Wilf-equivalent} if
$s_n^k(p)=s_n^k(q)$ for all~$n$. Notice that 0-Wilf equivalence
coincides with the standard notion of Wilf equivalence.  We also say
that two patterns $p$ and $q$ are {\em $\star$-Wilf-equivalent} if $p$
and $q$ are $k$-Wilf-equivalent for all $k\geq 0$. Two patterns $p$
and $q$ are {\em strongly $k$-Wilf-equivalent} if $s_n^H(p)=s_n^H(q)$
for each $n$ and for each $k$-element subset $H\subseteq
[n]$. Finally, $p$ and $q$ are {\em strongly $\star$-Wilf-equivalent}
if they are strongly $k$-Wilf-equivalent for all $k\geq 0$.

We note that although strong $k$-Wilf equivalence implies
$k$-Wilf-equivalence, and strong $\star$-Wilf equivalence implies
$\star$-Wilf equivalence, the converse implications are not true.
Consider for example the patterns $p=1342$
and $q=2431$. A partial permutation avoids $p$ if and only if its
reverse avoids $q$, and thus $p$ and $q$ are
$\star$-Wilf-equivalent. However, $p$ and $q$ are not strongly
1-Wilf-equivalent, and hence not strongly $\star$-Wilf-equivalent
either. To see this, we fix $H=\{2\}$ and easily check that
$s_5^H(p)=13$ while $s_5^H(q)=14$.

\subsection{Our Results}
The main goal of this paper is to establish criteria for $k$-Wilf
equivalence and $\star$-Wilf equivalence of permutation patterns. We
are able to show that most pairs of Wilf-equivalent patterns that were
discovered so far are in fact $\star$-Wilf-equivalent. The only
exception is the pair of patterns $p=2413$ and $q=1342$. Although
these patterns are known to be Wilf-equivalent~\cite{stan}, they are
neither 1-Wilf-equivalent nor 2-Wilf equivalent (see
Section~\ref{sec:length4}).

Many of our arguments rely on properties of partial 01-fillings of Ferrers diagrams. These
fillings are introduced in Section~\ref{sec:pfillings}, where we also
establish the link between partial fillings and partial permutations.

Our first main result is Theorem~\ref{thm-diag} in
Section~\ref{sec:strong-star-1}, which states that a permutation
pattern of the form $123\dotsb \ell X$ is strongly
$\star$-Wilf-equivalent to the pattern $\ell(\ell-1)\dotsb321X$, where
$X=x_{\ell+1}x_{\ell+2}\dotsb x_m$ is any permutation of
$\{\ell+1,\dotsc,m\}$.  This theorem is a strengthening of a result of
Backelin, West and Xin~\cite{bwx}, who show that patterns of this form
are Wilf-equivalent. Our proof is based on a different argument than
the original proof of Backelin, West and Xin. The main ingredient of
our proof is an involution on a set of fillings of Ferrers diagrams,
discovered by Krattenthaler~\cite{krat}. We adapt this involution to
partial fillings and use it to obtain a bijective proof of our result.

Our next main result is Theorem~\ref{thm-312} in
Section~\ref{sec:strong-star-2}, which states that for any permutation
$X$ of the set $\{4,5,\dotsc,k\}$, the two patterns $312X$ and $231X$
are strongly $\star$-Wilf-equivalent. This is also a refinement of an
earlier result involving Wilf equivalence, due to Stankova and
West~\cite{sw}. As in the previous case, the refined version requires
a different proof than the weaker version.

In Section~\ref{sec-bax}, we study the $k$-Wilf equivalence of
patterns whose length is small in terms of $k$. It is not hard to see
that all patterns of length $\ell$ are $k$-Wilf equivalent whenever
$\ell\le k+1$, because $s_n^k(p) = 0$ for every such $p$ and every~$n\ge \ell$.
Thus, the shortest patterns that exhibit nontrivial
behaviour are the patterns of length~$k+2$. For these patterns, we
show that $k$-Wilf equivalence yields a new characterization of Baxter
permutations: a pattern $p$ of length $k+2$ is a Baxter permutation if
and only if $s_n^k(p)=\binom{n}{k}$. For any non-Baxter permutation
$q$ of length $k+2$, $s_n^k(q)$ is strictly smaller than
$\binom{n}{k}$ and is in fact a polynomial in~$n$ of degree at
most~$k-1$.

In Section~\ref{sec:length4}, we focus on explicit enumeration of
$s_n^k(p)$ for small patterns~$p$. We obtain explicit closed-form
formulas for $s_n^k(p)$ for every $p$ of length at most four and every
$k\ge 1$.

\subsection{A note on monotone patterns}\label{ssec:mono}

Before we present our main results, let us illustrate the above definitions
on the example of the monotone pattern $12\dotsb\ell$.  Let
$\pi\in\sym_n^k$, and let $\pi'\in\sym_{n-k}$ be the permutation
obtained from $\pi$ by deleting all the holes. Note that $\pi$ avoids the
pattern $12\dotsb\ell$ if and only if $\pi'$ avoids
$12\dotsb(\ell-k)$. Thus,
\begin{equation}
s_n^k(12\dotsb \ell)=\binom{n}{k}s_n^0(12\dotsb(\ell-k)),\label{eq:monotone}
\end{equation}
where $\binom{n}{k}$ counts the possibilities of placing $k$ holes.
For instance, if $\ell=k+3$ then $s_n^k(12\dotsb
\ell)=\binom{n}{k}s_n^0(123)$, and it is well known that
$s_n^0(123)=C_n$, the $n$-th Catalan number. For general $\ell$,
Regev~\cite{Reg} found an asymptotic formula for $s_n^0(12\dotsb
\ell)$, which can be used to obtain an asymptotic formula for $s_n^k(12\dotsb
\ell)$ as $n$ tends to infinity.

\section{Partial fillings}\label{sec:pfillings}

In this section, we introduce the necessary definitions related to
partial fillings of Ferrers diagrams. These notions will later be useful
in our proofs of $\star$-Wilf equivalence of patterns.

Let $\lambda=(\lambda_1\ge\lambda_2\ge\dotsb\ge\lambda_k)$ be a
non-increasing sequence of $k$ nonnegative integers. A \emph{Ferrers
  diagram with shape $\lambda$} is a bottom-justified array $D$ of
cells arranged into $k$ columns, such that the $j$-th column from the
left has exactly $\lambda_j$ cells. Note that our definition of
Ferrers diagram is slightly more general than usual, in that we allow
columns with no cells.  If each column of $D$ has at least one cell,
then we call $D$ a \emph{proper Ferrers diagram}. Note that every
row of a Ferrers diagram $D$ has nonzero length (while we allow
columns of zero height). If all the columns of $D$ have zero
height---in other words, $D$ has no rows---then $D$ is called
\emph{degenerate}.

For the sake of consistency, we assume throughout this paper that the
rows of each diagram and each matrix are numbered from bottom to top,
with the bottom row having number 1. Similarly, the columns are
numbered from left to right, with column 1 being the leftmost column.

By \emph{cell $(i,j)$} of a Ferrers diagram $D$ we mean the cell of $D$ that 
is the intersection of $i$-th row and $j$-th column of the diagram. We assume 
that the cell $(i,j)$ is a unit square whose corners are lattice points with 
coordinates $(i-1,j-1)$, $(i,j-1)$, $(i-1,j)$ and $(i,j)$. The point $(0,0)$ 
is the bottom-left corner of the whole diagram. We say a diagram $D$ 
\emph{contains} a lattice point $(i,j)$ if either $j=0$ and the first column 
of $D$ has height at least $i$, or $j>0$ and the $j$-th column of $D$ has 
height at least~$i$. A point $(i,j)$ is a \emph{boundary point} of the 
diagram $D$ if $D$ contains the point $(i,j)$ but does not contain the cell 
$(i+1,j+1)$ (see Figure~\ref{fig-diag}). Note that a Ferrers diagram with $r$ 
rows and $c$ columns has $r+c+1$ boundary points.

\begin{figure}[ht]
\includegraphics{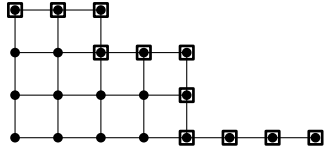}
\caption{A Ferrers diagram with shape $(3,3,2,2,0,0,0)$. The black dots represent the
  points. The black dots in squares are the boundary
  points.}\label{fig-diag}
\end{figure}


A \emph{01-filling} of a Ferrers diagram assigns to each cell the value 0 or 
1. A 01-filling is called a \emph{transversal filling} (or just a 
\emph{transversal}) if each row and each column has exactly one 1-cell.  A 
01-filling is \emph{sparse} if each row and each column has at most one 
1-cell.  A permutation $p=p_1p_2\dotsb p_\ell\in\sym_\ell$ can be represented 
by a \emph{permutation matrix} which is a 01-matrix of size $\ell\times 
\ell$, whose cell $(i,j)$ is equal to 1 if and only if $p_j=i$. If there is 
no risk of confusion, we abuse terminology by identifying a permutation 
pattern $p$ with the corresponding permutation matrix. Note that a 
permutation matrix is a transversal of a diagram with square shape.

Let $P$ be permutation matrix of size $n\times n$, and let $F$ be a sparse
filling of a Ferrers diagram. We say that $F$
\emph{contains} $P$ if $F$ has a (not necessarily contiguous) square
submatrix of size $n\times n$ which induces in $F$ a subfilling equal
to~$P$. This notion of containment generalizes usual permutation
containment.

We now extend the notion of partial permutations to partial fillings
of diagrams. 
Let $D$ be a Ferrers diagram with $k$ columns. Let $H$ be a subset of
the set of columns of~$D$. Let $\phi$ be a function that assigns to every cell of
$D$ one of the three symbols $0$, $1$ and \di, such that every cell in
a column belonging to $H$ is filled with \di, while every cell in a
column not belonging to $H$ is filled with $0$ or $1$.  The pair
$F=(\phi,H)$, will be referred to as \emph{a partial 01-filling (or
a partial filling) of the diagram $D$}. See Figure~\ref{fig-pfill}. The
columns from the set $H$ will be called \emph{the \di-columns} of $F$,
while the remaining columns will be called \emph{the standard
  columns}. Observe that if the diagram $D$ has columns of height zero, then
$\phi$ itself is not sufficient to determine the filling $F$, because
it does not allow us to determine whether the zero-height columns are
\di-columns or standard columns. For our purposes, it is necessary to
distinguish between partial fillings that differ only by the status of
their zero-height columns.

\begin{figure}[ht]
\includegraphics{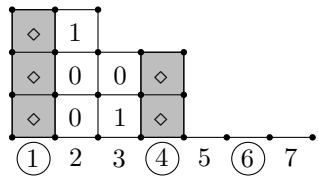}
\caption{A partial filling with \di-columns $1$, $4$ and $6$.}\label{fig-pfill}
\end{figure}

We say that a partial 01-filling is a \emph{partial transversal filling} (or 
simply a \emph{partial transversal}) if every row and every standard column 
has exactly one 1-cell. We say that a partial 01-filling is \emph{sparse} if 
every row and every standard column has at most one 1-cell.  A \emph{partial 
01-matrix} is a partial filling of a (possibly degenerate) rectangular 
diagram. In this paper, we only deal with transversal and sparse partial fillings.

There is a natural correspondence between partial permutations and
transversal partial 01-matrices. Let $\pi\in \sym_n^k$ be a partial
permutation. A \emph{partial permutation matrix representing $\pi$} is
a partial 01-matrix $P$ with $n-k$ rows and $n$ columns, with the
following properties:
\begin{itemize}
\item If $\pi_j=\di$, then the $j$-th column of
  $P$ is a \di-column.
\item If $\pi_j$ is equal to a number $i$, then the $j$-th
  column is a standard column. Also, the cell in column $j$ and row $i$
  is filled with $1$, and the remaining cells in column $j$ are filled
  with $0$'s.
\end{itemize}  
If there is no risk of confusion, we will make no distinction between a 
partial permutation and the corresponding partial permutation matrix.

To define pattern-avoidance for partial fillings, we first introduce the
concept of substitution into a \di-column, which is analogous to
substituting a number for a hole in a partial permutation. The idea is
to insert a new row with a 1-cell in the \di-column; this increases the height
of the diagram by one. Let us now describe the substitution formally.

Let $F$ be a partial filling of a Ferrers diagram with $m$ columns. Assume
that the $j$-th column of $F$ is a \di-column. Let $h$ be the height
of the $j$-th column. A \emph{substitution} into the $j$-th column is
an operation consisting of the following steps:
\begin{enumerate}
\item Choose a number $i$, with $1\le i\le h+1$.
\item Insert a new row into the diagram, between rows $i-1$ and
  $i$. The newly inserted row must not be longer than the $(i-1)$-th
  row, and it must not be shorter than the $i$-th row, so that the new
  diagram is still a Ferrers diagram. If $i=1$, we also assume that the new row
  has length $m$, so that the number of columns does not
  increase during the substitution.
\item Fill all the cells in column $j$ with the symbol $0$, except for
  the cell in the newly inserted row, which is filled with $1$. Remove
  column $j$ from the set of \di-columns.
\item Fill all the remaining cells of the new row with $0$ if they
  belong to a standard column, and with \di{} if they belong to a
  \di-column.
\end{enumerate}

Figure~\ref{fig-subst} illustrates an example of substitution.

\begin{figure}[ht]
\includegraphics{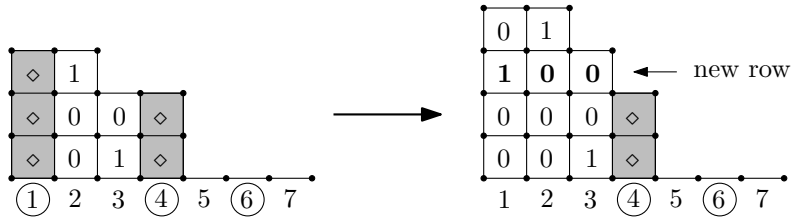}
\caption{A substitution into the first column of a partial filling, involving
  an insertion of a new row between the second and third rows of the
  original partial filling.}\label{fig-subst}
\end{figure}

Note that a substitution into a partial filling increases the number of rows
by~1. A substitution into a transversal (resp. sparse) partial filling
produces a new transversal (resp. sparse) partial filling. A partial filling $F$
with $k$ \di-columns can be transformed into a (non-partial)
filling $F'$ by a sequence of $k$ substitutions; we then say
that $F'$ is an \emph{extension} of~$F$.

Let $P$ be a permutation matrix. We say that a partial filling $F$ of a
Ferrers diagram \emph{avoids} $P$ if every extension of $F$
avoids~$P$. Note that a partial permutation $\pi\in S^n_k$ avoids a
permutation $p$, if and only if the partial permutation matrix
representing $\pi$ avoids the permutation matrix representing~$p$.

\section{A generalization of a Wilf-equivalence by Babson and West}

We say that two permutation matrices $P$ and $Q$ are 
\emph{shape-$\star$-Wilf-equivalent}, if for every Ferrers diagram $D$ there 
is a bijection between $P$-avoiding and $Q$-avoiding partial transversals of 
$D$ that preserves the set of \di-columns. Observe that if two permutations 
are shape-$\star$-Wilf-equivalent, then they are also strongly 
$\star$-Wilf-equivalent, because a partial permutation matrix is a special 
case of a partial filling of a Ferrers diagram.

The notion of shape-$\star$-Wilf-equivalence is motivated by the following
proposition, which extends an analogous result of Babson and West~\cite{BW}
for shape-Wilf-equivalence of non-partial permutations.

\begin{proposition}\label{prop-shape} 
  Let $P$ and $Q$ be shape-$\star$-Wilf-equivalent permutations, let $X$ be
  an arbitrary permutation. Then the two permutations $\diag{P}{X}$
  and $\diag{Q}{X}$ are strongly $\star$-Wilf-equivalent.
\end{proposition}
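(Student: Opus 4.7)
The plan is to extend the classical Babson--West argument on shape-Wilf equivalence to the partial setting, with the shape-$\star$-Wilf equivalence hypothesized on $P$ and $Q$ playing the role of shape-Wilf equivalence. Fix $n$ and $H \subseteq [n]$; we will construct a bijection from $\sym_n^H(\diag{P}{X})$ to $\sym_n^H(\diag{Q}{X})$.

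To each partial permutation matrix $M$ representing some $\pi \in \sym_n^H$ we attach a Ferrers diagram $D_M$ together with an induced partial transversal of $D_M$. The diagram $D_M$ is designed to extract the ``$P$-region'' of $M$, i.e.\ the portion of $M$ inside which the $P$-subpattern of any $\diag{P}{X}$-occurrence in any extension of $\pi$ must lie. Following Babson--West, this is done by first identifying a canonical collection of cells and $\di$-columns of $M$ whose role is to support the possible $X$-occurrences in the upper-right corner, and then taking the complementary region as $D_M$. With this calibration, the column heights of $D_M$ are weakly decreasing in the column index (because shifting rightward strictly shrinks the room available for an $X$-occurrence), so $D_M$ is a bona fide Ferrers diagram. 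The $\di$-columns of $M$ meeting $D_M$ become the $\di$-columns of the induced filling of $D_M$, and one verifies that the $1$-cells of $M$ lying inside $D_M$ contribute exactly one $1$ to each row and to each standard column of $D_M$, so the induced filling is a partial transversal.

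The key equivalence to establish is: $\pi$ avoids $\diag{P}{X}$ if and only if the induced partial transversal of $D_M$ avoids $P$. The easy direction combines a hypothetical $P$-occurrence in the induced transversal with the canonical $X$-occurrence in the upper-right region (guaranteed by the defining property of $D_M$) to produce a $\diag{P}{X}$-occurrence in a suitable extension of $\pi$. The harder direction requires showing that the $P$-part of any $\diag{P}{X}$-occurrence, in any extension of $\pi$, is forced to lie inside $D_M$, so that it projects to a genuine $P$-occurrence of the partial transversal. Both directions must be handled with care because $\di$-columns may contribute to either the $P$-part or the $X$-part depending on the extension.

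With the equivalence proved, the shape-$\star$-Wilf equivalence hypothesis supplies, for the specific Ferrers shape $D_M$, a bijection $\Phi$ between $P$-avoiding and $Q$-avoiding partial transversals of $D_M$ that preserves the set of $\di$-columns. We construct the image $M'$ by overwriting the cells of $M$ inside $D_M$ with the result of $\Phi$ applied to the induced transversal, leaving all cells outside $D_M$ untouched. Preservation of $\di$-columns guarantees $M' \in \sym_n^H$; since the data outside $D_M$ already determines $D_M$ by construction, we also get $D_{M'}=D_M$, and applying the equivalence in the reverse direction shows that $M'$ avoids $\diag{Q}{X}$. Invertibility of $\Phi$ yields the bijection. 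The main obstacle is the correct definition of $D_M$: it must simultaneously yield a valid Ferrers shape, induce a partial transversal, encode exactly the $P$-region of all extensions of $\pi$, and be stable under the rearrangement of cells inside $D_M$. This calibration is the heart of the Babson--West machinery, now complicated by the presence of $\di$-columns that may support either the $P$-part or the $X$-part of an occurrence.
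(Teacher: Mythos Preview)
Your plan follows the same Babson--West template as the paper, but one assertion is false and needs repair: the induced filling of $D_M$ is in general \emph{not} a partial transversal, only a \emph{sparse} partial filling. Concretely, take $X$ to be the $1\times 1$ identity and let $M$ be the permutation matrix of $312$ (no holes). The cells dominated by $X$ form the $1\times 2$ strip $\{(1,1),(1,2)\}$, and the induced filling is $(0\;1)$: the standard column~$1$ has no $1$-cell. More generally, a standard column $j$ of $D_M$ will have no $1$-cell whenever the unique $1$-cell of column $j$ in $M$ lies above the region dominated by~$X$; likewise a row of $D_M$ can be all-zero. So your sentence ``one verifies that the $1$-cells of $M$ lying inside $D_M$ contribute exactly one $1$ to each row and to each standard column of $D_M$'' does not hold, and you cannot apply the shape-$\star$-Wilf-equivalence bijection directly.

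The paper fixes this as follows: from the sparse filling $F=M(X)$, delete every all-zero row and every all-zero standard column to obtain a genuine partial transversal $F^-$ of a smaller Ferrers diagram; apply the hypothesized bijection to $F^-$ to get a $Q$-avoiding partial transversal $G^-$ of the same shape and with the same $\di$-columns; reinsert the deleted zero rows and zero standard columns to obtain a sparse $Q$-avoiding filling $G$ of the original shape $D_M$; finally overwrite $M(X)$ by $G$ to produce $N$. (Removing or inserting all-zero rows and all-zero standard columns does not affect containment of a permutation pattern, so $P$- and $Q$-avoidance transfer between $F,F^-$ and $G,G^-$.) Your stability claim $D_{M'}=D_M$ is correct, but ``the data outside $D_M$ determines $D_M$'' deserves the short verification the paper gives: if $(i,j)\notin M(X)$ then $M(>\!i,>\!j)$ lies entirely outside $M(X)$, hence equals $N(>\!i,>\!j)$; and for each boundary point $(i,j)$ of $M(X)$ the submatrix $M(>\!i,>\!j)$ again lies outside $M(X)$, so $(i,j)$ remains dominated by $X$ in~$N$.
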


Our proof of Proposition~\ref{prop-shape} is based on the same idea as the original argument of Babson and West~\cite{BW}. 
Before we state the proof, we need some
preparation. Let $M$ be a partial matrix with $r$ rows and $c$
columns. Let $i$ and $j$ be numbers satisfying $0\le i\le r$ and $0\le
j\le c$. Let $M(>\!i,>\!j)$ be the submatrix of $M$ formed by the cells
$(i',j')$ satisfying $i'>i$ and $j'>j$. In other words, $M(>\!i,>\!j)$
is formed by the cells to the right and above the
point $(i,j)$. The matrix $M(>\!r,>\!j)$ is assumed to be the
degenerate matrix with $0$ rows and $c-j$ columns, while $M(>\!i,>\!c)$ is 
assumed to be the empty matrix for any value of~$i$.
When the matrix $M(>\!i,>\!j)$ intersects a $\di$-column of $M$,
we assume that the column is also a $\di$-column of~$M(>\!i,>\!j)$, 
and similarly for standard columns.
                            
We will also use the analogous notation $M(\le\!i, \le\!j)$ to denote the submatrix of 
$M$ formed by the cells to the left and below the point~$(i,j)$.

Note that if $M$ is a partial permutation matrix, then $M(>\!i,>\!j)$ and 
$M(\le\!i, \le\!j)$ are sparse partial matrices. 

Let $X$ be any nonempty permutation matrix, and $M$ be a partial
permutation matrix. We say that a point $(i,j)$ of $M$ is
\emph{dominated by $X$ in $M$} if the partial matrix $M(>\!i,>\!j)$
contains~$X$. Similarly, we say that a cell of $M$ is dominated by
$X$, if the top-right corner of the cell is dominated by~$X$. Note
that if a point $(i,j)$ is dominated by $X$ in $M$, then all the cells
and points in $M(\le\!i,\le\!j)$ are dominated by $X$ as well. In particular, the
points dominated by $X$ form a (not necessarily proper) Ferrers diagram.

Let $k\equiv k(M)\ge 0$ be the largest integer such that the point $(0,k)$ is dominated by~$X$. If no such integer exists, set $k=0$.
Observe that all the cells of $M$ dominated by $X$ appear in the
leftmost $k$ columns of~$M$. Let $M(X)$ be the partial subfilling of
$M$ induced by the points dominated by $X$; formally $M(X)$ is defined
as follows:
\begin{itemize}
\item $M(X)$ has $k$ columns, some of which might have height zero,
\item the cells of $M(X)$ are exactly the cells of $M$ dominated by $X$,
\item a column $j$ of $M(X)$ is a \di-column, if and only if $j$
  is a \di-column of $M$.
\end{itemize}

Our proof of Proposition~\ref{prop-shape} is based on the next lemma.

\begin{lemma}\label{lem-shape} 
  Let $M$ be a partial permutation matrix, and let $P$ and $X$ be
  permutation matrices. Then $M$ contains $\diag{P}{X}$ if and only if
  $M(X)$ contains~$P$.
\end{lemma}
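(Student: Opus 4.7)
The proof is a direct verification of both implications, resting on the structural fact that the cells of $M(X)$ are precisely those cells $(i,j)$ of $M$ whose top-right corner, viewed as the lattice point $(i,j)$, is dominated by $X$ in $M$, together with the fact that these dominated points form a Ferrers diagram and are in particular closed under coordinate-wise decrease.

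For the direction $(\Leftarrow)$, suppose $M(X)$ contains $P$ via rows $r_1<\cdots<r_m$ and columns $c_1<\cdots<c_m$. Since each $c_j$ must carry a $1$-cell of the copy of $P$, the columns $c_1,\ldots,c_m$ are necessarily standard columns of $M$. The top-right cell $(r_m,c_m)$ of the copy lies in the Ferrers shape of $M(X)$, so the point $(r_m,c_m)$ is dominated by $X$, and hence $M(>\!r_m,>\!c_m)$ contains a copy of $X$ supported on some rows $r'_1<\cdots<r'_n$ all greater than $r_m$ and some standard columns $c'_1<\cdots<c'_n$ all greater than $c_m$. I then check that the combined row selection $r_1,\ldots,r_m,r'_1,\ldots,r'_n$ and column selection $c_1,\ldots,c_m,c'_1,\ldots,c'_n$ induces in $M$ a submatrix equal to $\diag{P}{X}$: the two diagonal blocks are the $P$- and $X$-copies by construction, and sparsity of $M$ forces the two off-diagonal blocks to be all-zero, because in each $P$-row the unique $1$ of $M$ already lies in a $P$-column (and analogously for $X$).

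For the direction $(\Rightarrow)$, suppose $M$ contains $\diag{P}{X}$ via rows $r_1<\cdots<r_{m+n}$ and columns $c_1<\cdots<c_{m+n}$. The $X$-block exhibits a copy of $X$ strictly above row $r_m$ and strictly to the right of column $c_m$, so the point $(r_m,c_m)$ is dominated by $X$; by closure of the dominated region under coordinate-wise decrease, every point $(r_i,c_j)$ with $i,j\le m$ is dominated as well, and therefore every cell $(r_i,c_j)$ with $i,j\le m$ is a cell of $M(X)$. The columns $c_1,\ldots,c_m$ are standard in $M$ (they carry the $1$-cells of the $P$-block), so the restriction of $M(X)$ to rows $r_1,\ldots,r_m$ and columns $c_1,\ldots,c_m$ agrees with the $P$-block of $\diag{P}{X}$ inside $M$, which is exactly $P$.

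The main subtlety in either direction is to see that the top-right corner $(r_m,c_m)$ of the $P$-block's bounding box is dominated by $X$: in $(\Leftarrow)$ this is extracted from the Ferrers shape of $M(X)$, and in $(\Rightarrow)$ it is exhibited directly by the $X$-block of $\diag{P}{X}$. Once this corner is in place, the rest is routine bookkeeping about sparsity of $M$ and about the distinction between standard and \di-columns.
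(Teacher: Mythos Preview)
Your argument treats containment in a partial filling as if it were direct selection of rows and columns in $M$ itself, but the paper defines a partial filling $F$ to \emph{contain} a permutation matrix precisely when some \emph{extension} of $F$ contains it. The sentence ``Since each $c_j$ must carry a $1$-cell of the copy of $P$, the columns $c_1,\ldots,c_m$ are necessarily standard columns of $M$'' is false: an occurrence of $P$ in an extension of $M(X)$ may well use former $\di$-columns (after substitution), and likewise for an occurrence of $\diag{P}{X}$ in an extension of~$M$. Concretely, take $M=(\di,1)$, a $1\times 2$ partial permutation matrix whose first column is a $\di$-column, and $P=X=(1)$, so $\diag{P}{X}=I_2$. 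Here $M(X)$ is the degenerate diagram with a single zero-height $\di$-column, and the only way it contains $P$ is through the extension; no standard column of $M(X)$ is available. Consequently your sparsity argument for the off-diagonal blocks, and the claim that the submatrix of $M$ on rows $r_1,\dots,r_m$ and columns $c_1,\dots,c_m$ already equals $P$, both break down once $\di$-columns are involved. The same confusion appears in the forward direction: the row indices $r_i$ live in the extension $M'$, not in $M$, so ``the point $(r_m,c_m)$ is dominated by $X$'' is not a statement about $M$ without further translation.

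The paper's proof sidesteps all of this by never unpacking the occurrence into explicit rows and columns. It works entirely at the level of partial fillings: from $M$ containing $\diag{P}{X}$ one extracts a point $(i,j)$ of $M$ with $M(>\!i,>\!j)$ containing $X$ and $M(\le\!i,\le\!j)$ containing $P$ (both in the partial sense), and then notes that $M(\le\!i,\le\!j)$ sits inside $M(X)$ as a sub-partial-filling, so containment of $P$ transfers. To salvage your approach you would have to pass to an extension $M'$, locate the splitting point there, and then argue that the relevant sub-partial-matrices of $M$ inherit the containments --- which is exactly what the paper's abstract formulation accomplishes in one line.
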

\begin{proof}
  Assume that $M$ contains $\diag{P}{X}$. It is easy to see that $M$
  must then contain a point $(i,j)$ such that the matrix $M(>\!i,>\!j)$
  contains $X$ while the matrix $M(\le\!i, \le\!j)$ contains $P$. By
  definition, the point $(i,j)$ is dominated by $X$ in $M$, and hence
  all the points of $M(\le\!i, \le\!j)$ are dominated by $X$ as
  well. Thus, $M(\le\!i,\le\!j)$ is a (possibly degenerate) submatrix of
  $M(X)$, which implies that $M(X)$ contains~$P$.
  
  The converse implication is proved by an analogous argument.
\end{proof}

We are now ready to prove Proposition~\ref{prop-shape}.

\begin{proof}[Proof of Proposition~\ref{prop-shape}]
  Let $P$ and $Q$ be two shape-$\star$-Wilf-equivalent matrices, and let $f$
  be the bijection that maps $P$-avoiding partial transversals to
  $Q$-avoiding partial transversals of the same diagram and with the
  same \di-columns.  Let $M$ be a partial permutation matrix
  avoiding $\diag{P}{X}$.

  By Lemma~\ref{lem-shape}, $M(X)$ is a sparse partial filling avoiding~$P$.
  Let $F$ denote the partial filling $M(X)$. Consider the transversal
  partial filling $F^-$ obtained from $F$ by removing all the rows and all
  the standard columns that contain no 1-cell. Clearly $F^-$ is a $P$-avoiding partial transversal. 
  Use the bijection $f$
  to map the partial filling $F^-$ to a $Q$-avoiding partial transversal $G^-$ of the
  same shape as~$F^-$. By reinserting the zero rows and zero standard
  columns into $G^-$, we obtain a sparse $Q$-avoiding filling $G$ of
  the same shape as $F$. Let us transform the partial matrix $M$ into
  a partial matrix $N$ by replacing the cells of $M(X)$ with the cells
  of $G$, while the values of all the remaining cells of $M$ remain the
  same.

  We claim that the matrix $N$ avoids $\diag{Q}{X}$. By
  Lemma~\ref{lem-shape}, this is equivalent to claiming that $N(X)$
  avoids $Q$. We will in fact show that $N(X)$ is exactly the filling~$G$. To show this, it is enough to show, for any point $(i,j)$, that
  $M(X)$ contains $(i,j)$ if and only if $N(X)$ contains~$(i,j)$.
  This will imply that $M(X)$ and $N(X)$ have the same shape, and hence $G=N(X)$. 

  Let $(i,j)$ be a point of $M$
  not belonging to~$M(X)$. Since $(i,j)$ is not in $M(X)$, we see
  that $M(>\!i,>\!j)$ is the same matrix as $N(>\!i,>\!j)$, and this
  means that $(i,j)$ is not dominated by $X$ in~$N$, hence $(i,j)$ is not in $N(X)$. 

  Now assume that
  $(i,j)$ is a point of $M(X)$. Let $(i',j')$ be a boundary point of $M(X)$ such that $i'\ge i$ and $j'\ge j$.    
  Then the
  matrix $M(>\!i',>\!j')$ is equal to the matrix $N(>\!i',>\!j')$,
  showing that $(i',j')$ belongs to $N(X)$, and 
  hence $(i,j)$ belongs to $N(X)$ as well. We conclude
  that $N(X)$ and $M(X)$ have the same
  shape. This means that $N(X)$ avoids $Q$, and hence $N$ avoids $\diag{Q}{X}$.

  Since we have shown that $M(X)$ and $N(X)$ have the same shape, it
  is also easy to see that the above-described transformation
  $M\mapsto N$ can be inverted, showing that the transformation is a
  bijection between partial permutation matrices avoiding
  $\diag{P}{X}$ and those avoiding $\diag{Q}{X}$. The bijection clearly preserves the position of $\di$-columns, and shows that $\diag{P}{X}$ and $\diag{Q}{X}$ are strongly $\star$-Wilf equivalent.
\end{proof}

\section{Strong $\star$-Wilf-equivalence of 
  $12\cdots \ell X$ and $\ell(\ell-1)\cdots1X$}\label{sec:strong-star-1}

We will use Proposition~\ref{prop-shape} as the main tool to prove
strong $\star$-Wilf equivalence. To apply the proposition, we need to
find pairs of shape-$\star$-Wilf-equivalent patterns. A family of such
pairs is provided by the next proposition, which extends previous
results of Backelin, West and Xin~\cite{bwx}.

\begin{proposition}\label{prop-diag}
  Let $I_\ell=12\dotsb\ell$ be the identity permutation of order
  $\ell$, and let $J_\ell=\ell(\ell-1)\dotsb 21$ be the anti-identity
  permutation of order~$\ell$. The permutations $I_\ell$ and $J_\ell$
  are shape-$\star$-Wilf-equivalent.
\end{proposition}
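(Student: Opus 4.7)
My plan is to adapt Krattenthaler's involution from~\cite{krat} to the partial setting. Recall that for every Ferrers diagram $D$ and every $\ell$, Krattenthaler's construction provides an involution $\kappa_D$ on the set of (ordinary) transversals of $D$ that restricts to a bijection between $I_\ell$-avoiding and $J_\ell$-avoiding transversals of $D$. This settles the proposition in the non-partial case, and the task is to extend the bijection to partial transversals so that the $\diamond$-column set is also preserved.

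The first step is to reduce the partial setting to the non-partial one. Given a partial transversal $F$ of $D$ with $\diamond$-column set $H$, I would form the Ferrers diagram $D^-$ by deleting the columns of $H$ from $D$, and consider the ordinary transversal $F^-$ of $D^-$ induced by $F$. This reduction is legitimate: deleting columns from a Ferrers diagram yields another Ferrers diagram, and the partial-transversal condition---every row contains a 1-cell in some standard column---ensures that no row vanishes in passing from $D$ to $D^-$, so $F^-$ is genuinely a transversal of $D^-$. I then define $\Phi(F)$ to be the partial transversal of $D$ whose $\diamond$-columns coincide (as columns of $D$) with those of $F$ and whose standard-column cells are obtained from $\kappa_{D^-}(F^-)$ by reinserting the $\diamond$-columns at their original positions in $D$. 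By construction, $\Phi$ is an involution on partial transversals of $D$ preserving both the shape and the $\diamond$-column set.

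The crucial step is to show that $F$ avoids $I_\ell$ as a partial transversal if and only if $\Phi(F)$ avoids $J_\ell$. For this I would prove a characterization of partial avoidance that avoids explicit reference to extensions. Concretely, the aim is to show that $F$ contains $I_\ell$ if and only if there exist an NE chain of length $a$ in $F^-$ together with a selection of $\ell - a$ $\diamond$-columns of $F$ that can be interleaved compatibly with the chain; \emph{compatibly} meaning that the selected $\diamond$-column indices fall into the appropriate gaps between the chain columns, and the height of each selected $\diamond$-column allows a ``potential'' 1-cell in a row consistent with the chain. An analogous characterization should hold for $J_\ell$ and SE chains. Because $\kappa_{D^-}$ preserves the shape of $D^-$ and hence all the relative column positions and heights, the admissible $\diamond$-column interleavings for NE chains in $F^-$ correspond exactly to those for SE chains in $\kappa_{D^-}(F^-)$, giving the desired equivalence.

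The main technical obstacle will be formulating and verifying the characterization above. The delicacy lies in the interaction of substitutions into different $\diamond$-columns: inserting a new row for one $\diamond$-column can change the effective heights available to other columns, so the ``reach'' of each $\diamond$-column needs to be defined intrinsically from $D$ and $H$ rather than via a specific sequence of extensions. I would therefore define the reach of each $\diamond$-column as an interval of rows of $D^-$ determined by its original height and the heights of surrounding columns, and then verify that the compatibility conditions for NE chains and SE chains interact with the reach symmetrically. Once that symmetry is established, the shape-preservation of $\kappa_{D^-}$ carries the characterization through, completing the proof.
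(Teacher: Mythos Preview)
Your plan is on the right track—the paper also builds the bijection from Krattenthaler's map—but you are making the reduction harder than it needs to be, and in doing so you lose sight of the precise property of $\kappa$ that is actually required.

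The paper does not delete the $\diamond$-columns. Instead it replaces every $\diamond$ by $0$, obtaining a \emph{sparse} (not transversal) filling $F_0$ of the \emph{original} diagram $D$, and applies Krattenthaler's involution $\kappa$ directly to $F_0$. The point is that $\kappa$ is defined on all sparse fillings and preserves the set of $1$-cells in every column, so the former $\diamond$-columns stay all-zero and can be turned back into $\diamond$-columns afterwards. Working on $D$ rather than on your $D^-$ keeps the boundary points fixed, and this matters because the property of $\kappa$ one needs is not merely that it exchanges $I_\ell$- and $J_\ell$-avoidance globally, but that it swaps the maximal NE- and SE-chain lengths $I(F_0,i,j)$ and $J(\kappa(F_0),i,j)$ at \emph{every} boundary point $(i,j)$ of $D$.

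With that stronger statement in hand, the ``reach'' obstacle you anticipate evaporates. Because $D$ is a Ferrers shape, at any boundary point $(i,j)$ every column of index at most $j$—in particular every $\diamond$-column to its left—has height at least $i$. A substitution into such a $\diamond$-column may therefore insert its $1$-cell between any two of the first $i$ rows, so it always extends any increasing (or decreasing) chain inside $F_0(\le i,\le j)$ by one. This gives the clean identity
\[
I(F,i,j)=h(F,j)+I(F_0,i,j),\qquad J(F,i,j)=h(F,j)+J(F_0,i,j),
\]
where $h(F,j)$ counts the $\diamond$-columns among the first $j$. The proposition then follows in one line: $h$ is unchanged by the construction, and $\kappa$ swaps $I(F_0,\cdot,\cdot)$ with $J(\kappa(F_0),\cdot,\cdot)$ at every boundary point.

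Your concluding step—that ``admissible $\diamond$-column interleavings for NE chains in $F^-$ correspond exactly to those for SE chains in $\kappa_{D^-}(F^-)$''—does not follow from shape preservation alone: $\kappa$ furnishes no correspondence between individual chains, only between the maximal chain lengths in each corner rectangle. Once you invoke the correct pointwise property of $\kappa$ together with the Ferrers observation above, the whole interleaving/compatibility analysis you flag as the main obstacle becomes unnecessary.
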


Before stating the proof, we introduce some notation and
terminology.
Let $F$ be a sparse partial filling of a Ferrers diagram, and let $(i,j)$ be
a boundary point of $F$. Let $h(F,j)$ denote the number of
\di-columns among the first $j$ columns of~$F$. Let $I(F,i,j)$ denote
the largest integer $\ell$ such that the partial matrix $F(\le\!i,
\le\!j)$ contains $I_\ell$. Similarly, let $J(F,i,j)$ denote the largest
$\ell$ such that $F(\le\!i, \le\!j)$ contains~$J_\ell$.

We let $F_0$ denote the (non-partial) sparse filling obtained by
replacing all the symbols \di{} in $F$ by zeros.

Let us state without proof the following simple observation.

\begin{observation}\label{obs-ij} Let $F$ be a sparse partial filling.
  \begin{enumerate}
  \item $F$ contains a permutation matrix $P$ if and only if $F$ has a
    boundary point $(i,j)$ such that $F(\le i, \le j)$ contains $P$.
  \item For any boundary point $(i,j)$, we have
    $I(F,i,j)=h(F,j)+I(F_0,i,j)$ and $J(F,i,j)=h(F,j)+J(F_0,i,j)$.
\end{enumerate}
\end{observation}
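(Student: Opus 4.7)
My plan for Observation~\ref{obs-ij} has two parts, the first of which I expect to be the main obstacle. For Part~(1), the $(\Leftarrow)$ direction is immediate: if $F(\le i, \le j)$ contains $P$, I take an extension of $F(\le i, \le j)$ witnessing this containment and complete it to an extension of $F$, substituting arbitrarily into the remaining $\di$-columns and setting the remaining standard cells to~$0$. Since $F(\le i, \le j)$ is a sub-partial-filling of $F$ with $\di$-column structure inherited from $F$, the occurrence of $P$ is preserved, so $F$ contains $P$.

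For $(\Rightarrow)$, I start from an extension $F^*$ of $F$ that contains $P$, identify the rightmost column $c_n$ and topmost row $r_n$ used by the occurrence (where $n$ is the order of $P$), translate $(r_n, c_n)$ back to a point of $F$ by subtracting the number of rows inserted at positions $\le r_n$, and then enlarge this point rightward and upward along the staircase of boundary points of $F$ to reach a boundary point $(i, j)$ with $i$ at least the translated row and $j \ge c_n$. The content is that $F(\le i, \le j)$ still admits an extension realizing the original occurrence: every standard $1$-cell of the occurrence lies in $F(\le i, \le j)$, and each $\di$-column used by the occurrence (all of which satisfy $c_k \le j$) remains a $\di$-column of $F(\le i, \le j)$ into which I may substitute to reproduce its $1$-cell at the required row. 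The main difficulty here is the careful bookkeeping of how row indices shift between $F$ and $F^*$ under the sequence of substitutions defining the extension.

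For Part~(2), I separate an occurrence of $I_\ell$ in some extension of $F$ into cells lying in standard columns of $F$ versus cells coming from substitutions into $\di$-columns among the first $j$ columns. The latter type contributes at most $h(F, j)$ cells, while the former gives an increasing chain of $1$-cells lying entirely inside $F_0(\le i, \le j)$, so $\ell \le h(F, j) + I(F_0, i, j)$. For the matching lower bound, I fix a longest increasing chain in $F_0(\le i, \le j)$ of length $I(F_0, i, j)$ and process the $\di$-columns of $F$ in positions $\le j$ from left to right, performing a substitution in each that places its new $1$-cell strictly above all previously selected $1$-cells. Since a substitution may insert its new row at any height up to one above the current column height, this greedy interleaving always succeeds and yields an increasing chain of length $h(F, j) + I(F_0, i, j)$. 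The argument for $J_\ell$ is symmetric, with \emph{above} replaced by \emph{below} throughout; in both cases the non-trivial direction is the upper bound, and once Part~(1) is in hand it is a clean counting argument.
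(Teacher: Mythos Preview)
The paper states this observation without proof, so there is no argument to compare against; your outline is a reasonable way to supply the omitted details, and Part~(1) and the upper bound in Part~(2) go through essentially as you describe.

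There is, however, a genuine slip in your lower-bound construction for Part~(2). Read literally, you process only the $\di$-columns from left to right and place each new $1$-cell ``strictly above all previously selected $1$-cells''. But if the leftmost $\di$-column $d_1$ lies to the left of some original chain cell, say $(a_m,b_m)$ with $b_m>d_1$, then putting the new $1$-cell in column $d_1$ at a row above $a_m$ produces a cell that is above and to the \emph{left} of $(a_m,b_m)$, and the combined set is no longer an occurrence of $I_{h+m}$. Concretely, with $i=2$, $j=3$, a single $\di$-column at position $1$, and the original chain $(1,2),(2,3)$, your rule inserts the new cell at row $3$ in column $1$, and $(3,1),(1,2),(2,3)$ is not increasing.

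What you actually want, and what ``greedy interleaving'' should mean here, is: for each $\di$-column $d_k$, insert its new row immediately above the highest chain element lying in a column strictly to the \emph{left} of $d_k$ (taking row $0$ if there is none). This automatically shifts every chain element in a column to the right of $d_k$ up by one, so the new cell sits strictly between its left and right neighbours in the merged column order. An easy induction on the merged left-to-right sequence then shows the resulting $h(F,j)+I(F_0,i,j)$ cells form an increasing chain. The argument for $J$ is symmetric, inserting each new row immediately \emph{below} the lowest chain element to the left.
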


The key to the proof of Proposition~\ref{prop-diag} is the following
result, which is a direct consequence of more general results of
Krattenthaler~\cite[Theorems 1--3]{krat} obtained using the theory of growth
diagrams.

\begin{fact}\label{fac-krat} 
  Let $D$ be a Ferrers diagram. There is a bijective mapping $\kappa$
  from the set of all (non-partial) sparse fillings of $D$ onto itself, with the
  following properties.
\begin{enumerate}
\item For any boundary point $(i,j)$ of $D$, and for any sparse
  filling $F$, we have $I(F,i,j)=J(\kappa(F),i,j)$ and
  $J(F,i,j)=I(\kappa(F),i,j)$.
\item The mapping $\kappa$ preserves the number of 1-cells in each row
  and column. In other words, if a row (or column) of a sparse filling
  $F$ has no 1-cell, then the same row (or column) of
  $\kappa(F)$ has no 1-cell either.
\end{enumerate}
\end{fact}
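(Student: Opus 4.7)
The plan is to derive the shape-$\star$-Wilf-equivalence of $I_\ell$ and $J_\ell$ directly from Krattenthaler's bijection $\kappa$ of Fact~\ref{fac-krat}, by erasing the $\di$-columns, applying $\kappa$ to the underlying sparse filling, and then restoring the $\di$-columns in exactly the same positions. Concretely, fix a Ferrers diagram $D$ and let $F$ be a partial transversal of $D$ with $\di$-column set $H$. Form $F_0$ by replacing every $\di$ in $F$ by $0$; this is a (non-partial) sparse filling of $D$ in which every row and every column outside $H$ contains exactly one $1$-cell, while every column in $H$ contains no $1$-cell at all. Set $G_0:=\kappa(F_0)$. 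By Fact~\ref{fac-krat}(2) the row and column 1-counts are preserved, so in particular the columns of $H$ are still devoid of $1$-cells in $G_0$. Define $\Phi(F)$ to be the partial filling obtained from $G_0$ by overwriting the cells in each column of $H$ with the symbol $\di$. Then $\Phi(F)$ is a partial transversal of $D$ whose $\di$-column set is exactly~$H$.

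The heart of the argument is the avoidance transfer. By Observation~\ref{obs-ij}(1), $F$ avoids $I_\ell$ if and only if $I(F,i,j)\le\ell-1$ at every boundary point $(i,j)$ of $D$; Observation~\ref{obs-ij}(2) rewrites this as $h(F,j)+I(F_0,i,j)\le\ell-1$ at every such point. Fact~\ref{fac-krat}(1) gives $I(F_0,i,j)=J(G_0,i,j)$. Since $\Phi$ was defined to fix the $\di$-column set and coincides with $\kappa$ on the underlying sparse filling, we have $h(\Phi(F),j)=h(F,j)$ and $(\Phi(F))_0=G_0$. Substituting and applying Observation~\ref{obs-ij} once more in the other direction yields $J(\Phi(F),i,j)\le\ell-1$ at every boundary point, i.e., $\Phi(F)$ avoids $J_\ell$.

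Bijectivity of $\Phi$ is immediate: the same recipe with $\kappa^{-1}$ in place of $\kappa$ sends $J_\ell$-avoiding partial transversals of $D$ (with $\di$-column set $H$) back to $I_\ell$-avoiders, and the two maps are mutually inverse because $\kappa$ is a bijection on the sparse fillings of $D$ and the erase/restore steps on $\di$-columns cancel (the set $H$ is preserved throughout). Since $\Phi$ preserves the set of $\di$-columns, it witnesses the shape-$\star$-Wilf-equivalence of $I_\ell$ and $J_\ell$.

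The only real obstacle is bookkeeping: Krattenthaler's involution is stated for fillings without $\di$-columns, so one must check that the ``erase $\di$ / apply $\kappa$ / restore $\di$'' pipeline produces valid partial transversals and that the containment statistics transform correctly. Observation~\ref{obs-ij}(2), by cleanly splitting $I(F,i,j)$ into the $\di$-contribution $h(F,j)$ (depending only on $H$) and the sparse-filling contribution $I(F_0,i,j)$ (controlled by $\kappa$), is precisely what makes the pipeline go through. A minor care point is that $D$ may have zero-height columns in $H$, but these contribute no cells to $F_0$ or $G_0$ and their $\di$-status is maintained by construction, so they require no separate treatment.
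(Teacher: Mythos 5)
There is a fundamental mismatch between what you were asked to prove and what you actually proved. The statement in question is Fact~\ref{fac-krat} itself: the \emph{existence} of a bijection $\kappa$ on the (non-partial) sparse fillings of a Ferrers diagram $D$ that swaps the statistics $I(\cdot,i,j)$ and $J(\cdot,i,j)$ at every boundary point while preserving the row and column $1$-counts. Your proposal never constructs or justifies $\kappa$; it takes $\kappa$ with properties (1) and (2) as given in its very first sentence and then builds the ``erase $\di$ / apply $\kappa$ / restore $\di$'' pipeline on top of it. In other words, you have assumed precisely the statement you were supposed to establish, so as a proof of Fact~\ref{fac-krat} the proposal is vacuous. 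An actual proof would have to engage with the construction behind $\kappa$ --- in the paper's treatment this is Krattenthaler's theory of growth diagrams (a local RSK-type correspondence propagated over the cells of the diagram, which is where the boundary-point statistics $I$ and $J$ come from, via the shapes recorded along the border), together with the observation that the result, stated by Krattenthaler for proper Ferrers diagrams, extends to diagrams with zero-height columns because appending empty columns does not affect pattern containment in a non-partial filling. Your proposal contains none of this.

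What you have written is, in substance, a correct proof of a \emph{different} statement, namely Proposition~\ref{prop-diag} (the shape-$\star$-Wilf-equivalence of $I_\ell$ and $J_\ell$), and it matches the paper's own proof of that proposition almost step for step: pass from $F$ to $F_0$, apply $\kappa$, restore the $\di$-columns using Fact~\ref{fac-krat}(2) to guarantee they carry no $1$-cells, and transfer avoidance via the decomposition $I(F,i,j)=h(F,j)+I(F_0,i,j)$ of Observation~\ref{obs-ij}. That argument is fine where it belongs, but it sits one level downstream of the statement under review. To repair the submission you would need either a self-contained account of the growth-diagram involution with a verification of properties (1) and (2), or at minimum an explicit reduction to Krattenthaler's Theorems 1--3 accompanied by the zero-height-column extension argument --- which is all the paper itself offers in lieu of a proof, since it records the result as a Fact imported from the literature.
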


In Krattenthaler's paper, the results are stated in terms of proper
Ferrers diagrams. However, the bijection obviously extends to Ferrers
diagrams with zero-height columns as well. This is because adding
zero-height columns to a (non-partial) filling does not affect pattern
containment.

From the previous theorem, we easily obtain the proof of the main
proposition in this section.

\begin{proof}[Proof of Proposition~\ref{prop-diag}] 
  Let $D$ be a Ferrers diagram. Let $F$ be an
  $I_\ell$-avoiding partial transversal of $D$. Let $F_0$ be the sparse
  filling obtained by replacing all the \di{} symbols of $F$ by
  zeros. Define $G_0=\kappa(F_0)$, where $\kappa$ is the bijection
  from Fact~\ref{fac-krat}. Note that all the \di-columns of
  $F$ are filled with zeros both in $F_0$ and $G_0$. Let $G$ be the
  sparse partial filling obtained from $G_0$ by replacing zeros with \di\ in
  all such columns. Then $G$ is a sparse partial filling with the same set of
  \di-columns as $F$.

  We see that for any boundary point $(i,j)$ of the diagram $D$,
  $h(F,j)=h(G,j)$. By the properties of $\kappa$, we further
  obtain $I(F_0,i,j)=J(G_0,i,j)$. In view of Observation~\ref{obs-ij},
  this implies that $G$ is a $J_\ell$-avoiding filling. It is clear that
  this construction can be inverted, thus giving the required
  bijection between $I_\ell$-avoiding and $J_\ell$-avoiding transversal
  partial fillings of~$D$.
\end{proof}

Combining Proposition~\ref{prop-shape} with
Proposition~\ref{prop-diag}, we get directly the main result of this
section.

\begin{theorem}\label{thm-diag}
  For any $\ell\leq m$, and for any permutation
  $X$ of $\{\ell+1,\dotsc,m\}$,
  the permutation pattern $123\dotsb(\ell-1)\ell X$ is
  strongly $\star$-Wilf-equivalent to the pattern
  $\ell(\ell-1)\dotsb 21X$.
\end{theorem}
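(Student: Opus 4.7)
The plan is essentially a two-step composition of the two propositions already proved in this section. First I would translate the patterns in the statement into block-matrix form: under the convention that the permutation matrix of $p=p_1\dotsb p_m$ has a $1$ in cell $(p_j,j)$, placing $I_\ell$ in the lower-left $\ell\times\ell$ block and the matrix of $X$ in the upper-right $(m-\ell)\times(m-\ell)$ block produces exactly the permutation whose one-line form is $12\dotsb\ell X$, since the $X$-block sits in rows $\ell+1,\dotsc,m$ and thus contributes entries in $\{\ell+1,\dotsc,m\}$ to positions $\ell+1,\dotsc,m$. The same verification with $J_\ell$ in place of $I_\ell$ shows that $\ell(\ell-1)\dotsb 21X$ is the one-line form of $\diag{J_\ell}{X}$. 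In short, the two patterns under consideration are precisely $\diag{I_\ell}{X}$ and $\diag{J_\ell}{X}$.

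Next I would invoke Proposition~\ref{prop-diag}, which asserts the shape-$\star$-Wilf-equivalence of $I_\ell$ and $J_\ell$, and feed this into Proposition~\ref{prop-shape} taking $P=I_\ell$, $Q=J_\ell$, and the given permutation $X$. Proposition~\ref{prop-shape} then delivers the strong $\star$-Wilf-equivalence of $\diag{I_\ell}{X}$ and $\diag{J_\ell}{X}$, which by the previous paragraph is exactly the claim of the theorem.

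There is no substantive obstacle here: the two non-trivial ingredients, namely the Krattenthaler-type involution $\kappa$ (extended to partial fillings via the treatment of $\di$-columns as filled by zeros that are restored after applying $\kappa$) and the Babson--West-style reduction through the dominated submatrix $M(X)$, have already been dispatched in Propositions~\ref{prop-diag} and~\ref{prop-shape}. The only small thing to be careful about is the orientation of the block-matrix convention in $\diag{\cdot}{\cdot}$, so that the off-diagonal placement of $I_\ell$ and $X$ really yields $12\dotsb\ell X$ rather than, say, $X\,12\dotsb\ell$ or a reversed version; but this is a one-line check against the definition of the $\diag{\cdot}{\cdot}$ macro.
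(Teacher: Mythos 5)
Your proposal is correct and coincides with the paper's own proof, which obtains Theorem~\ref{thm-diag} directly by combining Proposition~\ref{prop-diag} (shape-$\star$-Wilf-equivalence of $I_\ell$ and $J_\ell$) with Proposition~\ref{prop-shape} applied to $P=I_\ell$, $Q=J_\ell$; your block-matrix verification that the patterns are $\diag{I_\ell}{X}$ and $\diag{J_\ell}{X}$ is also the right (and, given the paper's bottom-to-top row numbering, correctly oriented) identification.
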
         

Notice that Theorem~\ref{thm-diag} implies, among other things, that all the 
patterns of size three are strongly $\star$-Wilf-equivalent.

\section{Strong $\star$-Wilf-equivalence of $312X$ and $231X$}
\label{sec:strong-star-2}

We will now focus on the two patterns 312 and 231. The main result of
this section is the following theorem.

\begin{theorem}\label{thm-312}
  The patterns 312 and 231 are shape-$\star$-Wilf-equivalent. By
  Proposition~\ref{prop-shape}, this means that for any permutation
  $X$ of the set $\{4,5,\dotsc,m\}$, the two permutations $312X$ and
  $231X$ are strongly $\star$-Wilf-equivalent.
\end{theorem}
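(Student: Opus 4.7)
The plan is to prove the shape-$\star$-Wilf equivalence of $312$ and $231$; the asserted strong $\star$-Wilf equivalence of $312X$ and $231X$ then follows immediately from Proposition~\ref{prop-shape}. Concretely, for each Ferrers diagram $D$ and each subset $H$ of its columns, I must produce a bijection between the $312$-avoiding and the $231$-avoiding partial transversals of $D$ whose $\di$-columns form exactly $H$.

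My strategy mirrors Section~\ref{sec:strong-star-1}. Given a partial transversal $F$, I would pass to the sparse non-partial filling $F_0$ obtained by replacing each $\di$ by $0$; in $F_0$, the $\di$-columns of $F$ are precisely the columns containing no $1$-cell. I would first establish an analogue of Observation~\ref{obs-ij}(2): a characterization of when $F$ avoids $312$ (respectively $231$) purely in terms of $F_0$ together with the indicator of the $\di$-columns. The intuition is that an extension of $F$ produces a $312$-pattern from a mix of original $1$-cells of $F_0$ and ``floating'' $1$-cells arising from substitutions into $\di$-columns, which can be positioned at any admissible row; the goal is to verify that this reduces to a purely combinatorial condition on $F_0$ enriched by the list of $\di$-columns. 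Next, in the spirit of Fact~\ref{fac-krat}, I would construct a bijection $\tau$ on sparse fillings of $D$ that preserves the number of $1$-cells in every row and every column (and in particular preserves empty columns) and that interchanges the $312$- and $231$-versions of this enriched condition. Given such a $\tau$, the desired bijection is $F\mapsto F_0\mapsto\tau(F_0)\mapsto G$, where $G$ restores $\di$ in the columns of $H$; column-weight preservation of $\tau$ then guarantees that $G$ has $\di$-columns exactly $H$.

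The main obstacle is the construction of $\tau$. Stankova and West's proof that $312$ and $231$ are shape-Wilf equivalent for ordinary transversals supplies the underlying idea, but it has to be both extended to sparse fillings and adapted so that the row- and column-weights are preserved. The Stankova--West bijection in its original recursive form does not obviously possess this preservation property, so the cleanest route is likely a growth-diagram reformulation tracking an appropriate statistic at each boundary point, in the style of Krattenthaler~\cite{krat}, which automatically preserves row and column weights under the swap. Verifying that the resulting $\tau$ correctly exchanges the $312$- and $231$-versions of the enriched avoidance condition is the technical heart of the argument.
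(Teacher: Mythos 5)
Your outer frame does match the paper's: reduce to shape-$\star$-Wilf equivalence via Proposition~\ref{prop-shape}, characterize when a partial transversal avoids $312$ (resp.\ $231$) purely in terms of the hole-free filling $F_0$ together with the set $H$ of $\di$-columns, then biject at the non-partial level. The characterization you postulate does exist --- it is the paper's Observations~\ref{obs-312} and~\ref{obs-231}: at most two $\di$-columns, at most one of nonzero height, a forbidden mixed configuration (C3), and then the left part avoiding $312$, the right part avoiding $12$, and the bottom-left part avoiding $21$ (with $231$, $21$, $12$ in the other case). But there is a genuine gap at exactly the step you defer: the construction of $\tau$. A Krattenthaler-style growth-diagram involution is not available for this pair. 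Fact~\ref{fac-krat} works for $I_\ell$ versus $J_\ell$ precisely because growth diagrams record the chain statistics $I(F,i,j)$ and $J(F,i,j)$ at boundary points, and because holes interact with those statistics additively, $I(F,i,j)=h(F,j)+I(F_0,i,j)$ (Observation~\ref{obs-ij}). Neither feature has an analogue here: $312$-avoidance is not determined by boundary-point chain statistics, and the enriched condition above is a global structural decomposition depending on the position of the leftmost $\di$-column, not a pointwise statistic that an involution could swap. This is exactly why the paper states that the Stankova--West argument does not generalize and takes a different route.

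What the paper actually does is exploit the rigidity the holes impose: by Lemma~\ref{lem-leftist} (leftist/rightist rows), any partial transversal satisfying (C1)--(C3) splits into $F_L$ on $D_L$ and $F_R$ on $D_R$, the piece $F_R$ is the \emph{unique} $12$-avoiding (resp.\ $21$-avoiding) transversal of $D_R$ by Fact~\ref{fac-12}, and everything reduces to the Key Lemma~\ref{lem-key312}: $|\cFk(D,312,21)|=|\cFk(D,231,12)|$ for ordinary transversals, where $312$-avoidance of the whole filling is \emph{coupled} with $21$-avoidance of the bottom $k$ rows and must be exchanged simultaneously with the $231$/$12$ pair --- no weight-preserving swap of $312$ and $231$ alone would suffice. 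That lemma is then proved not by fillings machinery at all, but by translating to matchings via de Mier's correspondence $\mu$ and composing Jel\'{\i}nek's bijection $\psi$ between $\mcab$-avoiding and $\cC$-avoiding matchings (Facts~\ref{fac-jel} and~\ref{fac-psi}) with an add-an-edge step that encodes the $k$-nesting/$k$-crossing boundary condition, plus two reversals. If you push your plan honestly --- delete the empty rows and columns of the sparse fillings --- you land precisely at this coupled statement, for which your proposed growth-diagram construction of $\tau$ supplies no proof. So the proposal is a correct framing with the decisive ingredient missing, and the specific method suggested for producing it would fail.
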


Theorem~\ref{thm-312} generalizes a result of Stankova and
West~\cite{sw}, who have shown that 312 and 231 are shape-Wilf
equivalent. The original proof of Stankova and West~\cite{sw} is
rather complicated, and does not seem to admit a straightforward
generalization to the setting of shape-$\star$-Wilf-equivalence. Our proof
of Theorem~\ref{thm-312} is different from the argument of Stankova
and West, and it is based on a bijection of Jel\'\i nek~\cite{jel},
obtained in the context of pattern-avoiding ordered matchings.

Let us begin by giving a description of 312-avoiding and 231-avoiding
partial transversals. We first introduce some
terminology. Let $D$ be a Ferrers diagram with a prescribed set of
\di-columns. If $j$ is the index of the leftmost \di-column
of $D$, we say that the columns $1,2,\dotsc,j-1$ form \emph{the left
  part of $D$}, and the columns to the right of column $j$ form
\emph{the right part of $D$}. We also say that the rows that intersect
column $j$ form the \emph{bottom part of $D$} and the remaining rows
form \emph{the top part of~$D$}. See Figure~\ref{fig-leftright}.

If $D$ has no \di-column, then the left part and the top part is
the whole diagram $D$, while the right part and the bottom part are
empty.

The intersection of the left part and the top part of $D$ will be
referred to as \emph{the top-left part of~$D$}. The top-right,
bottom-left and bottom-right parts are defined analogously. Note that
the top-right part contains no cells of~$D$, the top-left and bottom-right
parts form a Ferrers subdiagram of $D$, and the bottom-left part is a
rectangle.

\begin{figure}
\includegraphics{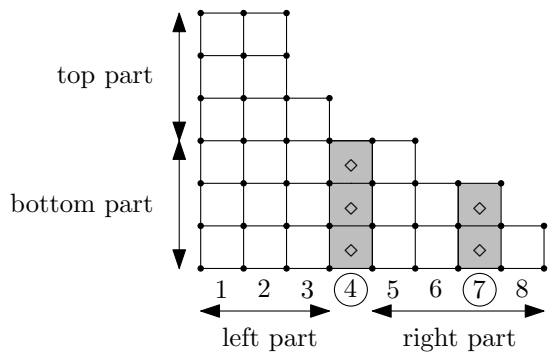}
\caption{An example of a Ferrers diagram with two \di-columns. The
  left, right, top, and bottom parts are shown.}\label{fig-leftright}
\end{figure}

\begin{observation}\label{obs-312} A partial transversal $F$ of a Ferrers diagram avoids
the pattern 312 if and only if it satisfies the following conditions:
\begin{enumerate}
\item[(C1)] $F$ has at most two \di-columns.
\item[(C2)] If $F$ has at least three columns, then at most one
  \di-column of $F$ has nonzero height.
\item[(C3)] Let $i<i'$ be a pair of rows, let $j<j'$ be a pair of
  columns. If the row $i'$ intersects column $j'$ inside $F$, and if
  the $2\times 2$ submatrix of $F$ induced by rows $i,i'$ and columns
  $j,j'$ is equal to the matrix
  $\left(\begin{smallmatrix}1&0\\0&1\end{smallmatrix}\right)$, then
    either the two columns $j,j'$ both belong to the left part, or
    they both belong to the right part (in other words, the
    configuration depicted in Figure~\ref{fig-c3} is forbidden).
\item[(C4)] The subfilling induced by the left part of $F$ avoids $312$.
\item[(C5)] The subfilling induced by the right part of $F$ avoids $12$.
\item[(C6)] The subfilling induced by bottom-left part of $F$ avoids $21$.
\end{enumerate}
\end{observation}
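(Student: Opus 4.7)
My plan is to prove the observation by establishing both directions of the equivalence.

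\emph{Necessity.} I argue the contrapositive, producing an explicit extension of $F$ containing 312 whenever any of (C1)--(C6) fails. For (C1), three $\di$-columns can be substituted at a high, a low, and an intermediate row, respectively, so that the new 1-cells form a 312. For (C2), two $\di$-columns of positive height in a diagram with at least one further column provide enough flexibility in the Ferrers shape to produce 312 after two substitutions. For (C3), a 21-crossing at cells $(i', j), (i, j')$ with $j$ in the left part and $j'$ in the right part becomes a 312 by substituting the leftmost $\di$-column $j^*$ (where $j < j^* < j'$) at position $1$: the heights in column order $(j, j^*, j')$ are then $(i'+1, 1, i+1)$. (C4) is immediate since a 312 in the left part of $F$ is already a 312 in $F$. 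For (C5), a 12 at $(r_1, c_1), (r_2, c_2)$ in the right part becomes a 312 by substituting $j^*$ at row $p = r_2 + 1$; this is valid because the Ferrers shape forces $h^* \ge h_{c_1} \ge r_2$, and the new heights in column order $(j^*, c_1, c_2)$ are $(r_2+1, r_1, r_2)$. For (C6), a 21 at $(r_1, c_1), (r_2, c_2)$ in the bottom-left becomes a 312 by substituting $j^*$ at any $p \in (r_2, r_1]$, valid since $r_1 \le h^*$, yielding heights $(r_1+1, r_2, p)$ in column order $(c_1, c_2, j^*)$.

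\emph{Sufficiency.} Assume (C1)--(C6) hold and suppose for contradiction that some extension $F'$ of $F$ contains a 312 pattern at cells $(a_1, b_1), (a_2, b_2), (a_3, b_3)$ with $b_1 < b_2 < b_3$ and $a_1 > a_3 > a_2$. Each substitution inserts a single row and shifts higher rows uniformly upward, so the relative row-order of every set of original 1-cells is preserved in $F'$. By (C1), at most two of $b_1, b_2, b_3$ are $\di$-columns of $F$; I case-split on this number. If none are, the whole 312 already lies in $F$, and splitting into the four sub-cases according to how $b_1, b_2, b_3$ distribute around the leftmost $\di$-column $j^*$ produces: all three $b_i$ in the left part (violating (C4)), at least two in the right part (producing a 12 in the right part, violating (C5)), or only $b_3$ in the right part (producing a 21-crossing at $(a_1, b_1), (a_3, b_3)$, violating (C3)). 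If exactly one $b_i$ is a $\di$-column, I track the single row-shift induced by its substitution and use the valid range $[1, h^*+1]$ for the substitution position together with the Ferrers inequalities $h^* \ge h_c$ for right-part columns $c$ to force the two surviving original 1-cells into a bottom-left 21, a right-part 12, or a crossing 21 in $F$, contradicting (C6), (C5), or (C3) respectively. The final case of two substituted $\di$-columns is constrained to a narrow range of configurations by (C1) and (C2), and a short direct check rules out 312 in each remaining subcase.

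The principal technical obstacle is the bookkeeping in the ``exactly one substituted $\di$-column'' case: one must compare the substitution row $p$ carefully to the rows of the two surviving original 1-cells and invoke the Ferrers inequalities to pin the surviving pair into a forbidden region. This interplay between the constrained $p$, the row shifts, and the column heights is precisely what translates a hypothetical 312 in $F'$ back into a violation of one of (C3), (C5), or (C6) in $F$ itself, and is where the proof expends most of its effort.
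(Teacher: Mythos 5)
Your proposal is correct, but in the sufficiency direction it takes a genuinely different route from the paper's. The paper never fixes an extension and tracks row shifts: it considers an occurrence of 312 in $F$ itself (with \di-columns allowed to supply pattern cells) chosen \emph{extremally}, so that as many of its three columns as possible are \di-columns. That maximality choice does the work your bookkeeping does. In the all-standard case, if some \di-column intersected all three rows of the occurrence it could be swapped in, contradicting maximality; otherwise the occurrence lies entirely in the left part, so only (C4) can fail there. The mixed cases then fall to (C1), (C2), (C3), (C5) or (C6) simply by inspecting which of the three columns are \di-columns and whether the \di-column involved is the leftmost one. Your version instead case-splits on the number of \di-columns among $b_1,b_2,b_3$ in an arbitrary extension and re-derives by hand, via the Ferrers height inequalities and the top-right-corner constraint, exactly the dichotomy the paper gets for free: for instance, your argument that a non-leftmost \di-column among the $b_i$ is forced (by (C2) and $h_{j}\ge h_{j'}$ for $j<j'$) to have height zero, and that a bottom-row insertion then cannot host the required pattern cell, is the unrolled form of the paper's ``otherwise (C2) fails.'' What your route buys is explicitness: fully constructed witnesses in the necessity direction, which the paper dismisses with one sentence, and a self-contained verification that containment in $F'$ pulls back to $F$ (substitution leaves the lengths of existing rows unchanged, so, e.g., ``row $a_1$ intersects column $b_3$'' transfers from the extension back to $F$, which is exactly what (C3) needs). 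What the paper's route buys is brevity: no row-shift bookkeeping and no two-\di-column endgame at all, since maximality together with (C1) and (C2) collapses those cases immediately.

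Two small caveats on your write-up, neither fatal. First, your (C2) necessity step (``enough flexibility'') is the one construction you do not actually exhibit; it goes through, but you need an anchor 1-cell --- take the 1-cell of row 1, which exists in any partial transversal and lies in a standard column, and split on its position relative to the two positive-height \di-columns. Second, in the two-\di-column check the essential point worth stating is that (C1) caps the total number of substitutions at two, so the rightmost pattern column (which, as you note, is forced to have height zero) ends with height at most $2$ in the full extension --- too small to contain the top-right corner of the $3\times3$ pattern submatrix; also, where you write ``heights'' in the necessity constructions you mean the rows of the relevant 1-cells, not column heights.
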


\begin{figure}
\includegraphics{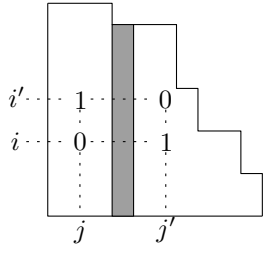}
\caption{The configuration forbidden by condition (C3) of
  Observation~\ref{obs-312}. The column $j$ is in the left part of the
  diagram, while $j'$ is in the right part.}\label{fig-c3}
\end{figure}

\begin{proof}
  It is easy to see that if any of the six conditions fails, then $F$ contains
  the pattern 312.

  To prove the converse, assume that $F$ has an occurrence of the pattern 312 
  that intersects three columns $j<j'<j''$. Choose the occurrence of 312 in 
  such a way that among the three columns $j,j'$ and $j''$, there are as 
  many \di-columns as possible.
 
  If all the three columns $j,j',j''$ are \di-columns, (C1)
  fails.  If two of the three columns are \di-columns, (C2)
  fails. If $j$ is a \di-column and $j'$ and $j''$ are standard,
  (C5) fails.  

Assume $j$ and $j''$ are standard columns and $j'$ is a
  \di-column. If $j'$ is the leftmost $\di$-column, (C3) fails, 
otherwise (C2) fails. Assume $j''$ is a \di-column and $j$ and $j'$ are
  standard. If $j''$ is the leftmost \di-column, (C6) fails,
  otherwise (C2) fails.

  Assume all the three columns are standard. Let $i<i'<i''$ be the three rows 
  that are intersected by the chosen occurrence of 312. If there is a 
  \di-column that intersects all the three rows $i,i',i''$, we may find an 
  occurrence of 312 that uses this \di-column, contradicting our choice of 
  $j,j'$ and $j''$.  On the other hand, if no \di-column intersects the three 
  rows, then the whole submatrix inducing 312 is in the left part and (C4) 
  fails.
\end{proof}

Next, we state a similar description of 231-avoiding partial transversals.

\begin{observation}\label{obs-231} 
  A partial transversal $F$ of a Ferrers diagram avoids the pattern 231 if and only
  if it satisfies the following conditions (the first three
  conditions are the same as the corresponding three conditions of
  Observation~\ref{obs-312}):
  \begin{enumerate}
  \item[(C1')] $F$ has at most two \di-columns.
  \item[(C2')] If $F$ has at least three columns, then at most one
    \di-column of $F$ has nonzero height.
  \item[(C3')] Let $i<i'$ be a pair of rows, let $j<j'$ be a pair of
    columns. If the row $i'$ intersects column $j'$ inside $F$, and if
    the $2\times 2$ submatrix of $F$ induced by rows $i,i'$ and
    columns $j,j'$ is equal to the matrix
    $\left(\begin{smallmatrix}1&0\\0&1\end{smallmatrix}\right)$, then
      either the two columns $j,j'$ both belong to the left part, or
      they both belong to the right part.
  \item[(C4')] The subfilling induced by the left part of $F$ avoids $231$.
  \item[(C5')] The subfilling induced by the right part of $F$ avoids $21$.
  \item[(C6')] The subfilling induced by bottom-left part of $F$ avoids $12$.
  \end{enumerate}
\end{observation}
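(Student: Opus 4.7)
My plan is to adapt the two-directional argument of Observation 5.1 to the pattern 231. Conditions (C1')--(C3') are identical to (C1)--(C3), so their analysis carries over verbatim; the substantive changes are in (C4')--(C6'), where $312$ is replaced by $231$ and the subpatterns $12$ and $21$ in the right and bottom-left parts are swapped. This swap precisely reflects the structural difference between $312$ (whose tail $12$ rises) and $231$ (whose tail $31$ falls), so that a $1$-cell introduced by substituting into the leftmost \di-column now plays the role of the "middle" value $2$ in $231$ together with a descending pair in the right part (rather than the role of the "top" value $3$ in $312$ together with an ascending pair), and symmetrically for the bottom-left part.

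For the forward direction I assume $F$ violates one of (C1')--(C6') and exhibit an occurrence of $231$ in some extension of $F$. Violations of (C1'), (C2'), (C3') are handled exactly as in Observation 5.1, producing $231$ rather than $312$ by choosing the substituted row heights in the new pattern. If (C5') fails, a $21$ in the right part formed at rows $i_1<i_2$ and columns $j'<j''$ combines with a new $1$-cell inserted in the leftmost \di-column $j^{*}<j'$ at a height strictly between $i_1$ and $i_2$ to form $231$ on columns $j^{*}<j'<j''$. If (C6') fails, a $12$ in the bottom-left part combines with a new $1$-cell inserted at the very top of the leftmost \di-column to supply the top value of a $231$. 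Violation of (C4') is immediate. For the converse direction, I pick an occurrence of $231$ whose three columns $j<j'<j''$ contain the maximum possible number of \di-columns, then case-split on that number and on which of the three positions the \di-column(s) occupy: three \di-columns violates (C1'); two \di-columns violates (C2') or (C3'); one \di-column at position $j''$ leaves a $12$ in the bottom-left part, violating (C6'); one at position $j$ leaves a $21$ in the right part, violating (C5'); one at the middle position $j'$ forces (C3') when $j'$ is the leftmost \di-column, and otherwise contradicts maximality by the Ferrers-heights argument below; and if no \di-column appears, the weakly-decreasing heights of Ferrers columns force all three columns into the left part, so (C4') fails.

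The main obstacle, just as in Observation 5.1, is the middle-column sub-case of the converse and the verification that the row-insertions in the forward direction are legitimate under the Ferrers-shape constraints. Both issues reduce to the fact that a Ferrers column to the left of a given column has height at least that of the given column: this implies that any \di-column lying strictly between $j$ and $j''$ has cells in all three rows $i_1,i_2,i_3$ of the $231$-occurrence, so one could substitute into that \di-column to produce an alternate occurrence with more \di-columns, contradicting maximality. The same heights bound shows that the new row needed in the forward direction fits between its neighbors, so the required extension always exists. Since this is the only nontrivial geometric input and it is already present in the proof of Observation 5.1, no new difficulties arise in the $231$ case.
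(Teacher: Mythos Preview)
Your approach is the right one and matches the paper's (which simply says the proof is analogous to Observation~5.1). However, there is a concrete error in your forward direction for (C6').

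You write that if (C6') fails, the $12$ in the bottom-left part combines with a new $1$-cell inserted ``at the very top of the leftmost \di-column to supply the top value of a $231$''. This is wrong. The $12$ sits in columns $j<j'$ of the left part, and the leftmost \di-column $j^*$ satisfies $j^*>j'$, so $j^*$ occupies the \emph{third} position of the would-be $231$. In $231$ the third column carries the \emph{smallest} value, not the largest. Inserting at the top of $j^*$ produces the pattern $123$, not $231$. The correct move is to insert the new row at the bottom (or at any height strictly below the lower row of the $12$); then columns $j<j'<j^*$ carry values middle, top, bottom, which is $231$. Since the $12$ lies in the bottom part, both of its rows are no higher than the height of $j^*$, so such an insertion is always available.

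A minor point on the converse: in the sub-cases ``one \di-column at position $j$'' and ``one \di-column at position $j''$'' you should, as the paper does, split further on whether that \di-column is the leftmost one. When it is not, the cleanest conclusion is that (C2') fails: the leftmost \di-column, being to the left of the given \di-column in a Ferrers shape, has at least the same (hence nonzero) height, so there are two \di-columns of nonzero height. Your ``contradicts maximality'' route can be made to work, but it requires choosing a different extension of $F$ and tracking row re-indexing across two substitutions; the (C2') argument avoids this bookkeeping entirely.
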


The proof of Observation~\ref{obs-231} is analogous to the proof of
Observation~\ref{obs-312}, and we omit it.

In the next part of our argument, we will look in more detail at
fillings satisfying some of the Conditions (C1) to (C6), or some of
the Conditions (C1') to (C6').

For later reference, we state explicitly the following easy facts
about transversal fillings of Ferrers diagrams that avoid permutation
matrices of size~2 (see, e.g.,~\cite{BW}).

\begin{fact}\label{fac-12} 
Assume that $D$ is a Ferrers diagram that has at least one (non-partial) 
transversal. The following holds.
\begin{itemize}
\item 
The diagram $D$ has exactly one 12-avoiding transversal. To construct this 
transversal, take the rows of $D$ in top-to-bottom order, and in each row $i$, 
insert a 1-cell into the leftmost column that has no 1-cell in any of the 
rows above row~$i$.
\item 
The diagram $D$ has exactly one 21-avoiding transversal. To construct this 
transversal, take the rows of $D$ in top-to-bottom order, and in each row $i$, 
insert a 1-cell into the rightmost column that has no 1-cell in any of the 
rows above row~$i$.
\end{itemize}
\end{fact}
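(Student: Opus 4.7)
The plan is to prove both assertions by induction on the common number $n$ of rows and columns of $D$ (these agree since $D$ admits a transversal). The base case $n=1$ is immediate: the single cell of $D$ must carry the lone $1$-cell, which vacuously avoids every two-element pattern and coincides with both greedy descriptions.

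I would first treat the $12$-avoiding claim. Let $F$ be an arbitrary $12$-avoiding transversal of $D$ and let $(n,c)$ denote the position of its $1$-cell in the top row. For every other $1$-cell $(i,j)$ of $F$ one has $i<n$, and the pair $\{(i,j),(n,c)\}$ is an occurrence of $12$ precisely when $j<c$. Avoidance of $12$ therefore forces every other $1$-cell into columns strictly greater than $c$; combined with the requirement that every column of the transversal carry a $1$-cell, this pins $c=1$, matching the greedy prescription for the top row. I would then delete row $n$ and column $1$ to obtain a diagram $D'$ with $n-1$ rows and columns, observing that the remaining column heights stay weakly decreasing because the removed column was the tallest (of height $n$). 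The restriction $F|_{D'}$ is a $12$-avoiding transversal of $D'$, the inductive hypothesis identifies it as the unique such transversal and matches it with the greedy on $D'$, and both properties propagate back to $F$ once the forced cell $(n,1)$ is re-attached.

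For the $21$-avoiding claim I would run the mirror argument: the pair $\{(i,j),(n,c)\}$ now constitutes an occurrence of $21$ precisely when $j>c$, so every other $1$-cell of a $21$-avoiding transversal must lie in a column strictly less than $c$. The transversal property then forces $c$ to be the rightmost column extending up to the top row, which matches the corresponding greedy, and deleting the top row together with this column reduces the problem to a smaller Ferrers diagram on which induction applies.

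The main technical check is ensuring that the diagram left after deletion is still Ferrers-shaped. For the $12$-case this is transparent, since the deleted column is leftmost and tallest and its removal simply drops the largest entry of the weakly-decreasing height sequence. For the $21$-case one uses that the deleted column has the maximal height $n$ (because it extends to the top row), so every column to its left also has height exactly $n$, and removing such a column preserves the weakly-decreasing sequence of column heights; existence of a transversal on the smaller diagram is then inherited from the restriction of~$F$.
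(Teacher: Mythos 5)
The paper offers no internal proof to compare against: it states this fact as a known easy result, citing Babson and West. Your inductive scheme is a sensible way to supply a proof, and the $12$-half is correct: for an \emph{increasing} pair of 1-cells in rows $i<i'$ and columns $j<j'$, the top-right cell of the bounding square is $(i',j')$, which is itself a 1-cell of the diagram, so the square submatrix required by the paper's definition of containment is automatically present; hence your forcing $c=1$ in the top row and the deletion of row $n$ and column $1$ go through (modulo the cosmetic point that deleting the top row also truncates any other full-height columns from $n$ to $n-1$, which still leaves a weakly decreasing height sequence).

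The $21$-half, however, contains a genuine error. Under the paper's definition, a descending pair of 1-cells in rows $i<i'$ and columns $j<j'$ is an occurrence of $21$ only if the cell $(i',j')$ — the top-right corner of the bounding square — lies in $D$. So your claim that $\{(i,j),(n,c)\}$ is an occurrence of $21$ \emph{precisely} when $j>c$ is false, and so is the deduction that every other 1-cell lies in a column strictly less than $c$. Concretely, take $D$ of shape $(2,1)$: its only transversal has 1-cells $(2,1)$ and $(1,2)$, and it \emph{avoids} $21$ because the cell $(2,2)$ is not in the diagram, even though the non-top 1-cell sits to the right of $c=1$. Followed literally, your forcing would give $c=n$, i.e., that no non-square Ferrers diagram admits a $21$-avoiding transversal, contradicting the statement itself. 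The conclusion you then assert — $c$ is the rightmost column extending to the top row — is the correct one, but it requires the corrected criterion: a 1-cell $(i,j)$ with $j>c$ forms a $21$ with $(n,c)$ if and only if column $j$ has full height $n$; hence all full-height columns to the right of $c$ must be empty of 1-cells, and transversality forces $c$ to be the rightmost full-height column. With this repair the deletion step works, though the same bounding-box bookkeeping is needed to check that $21$-occurrences of $F$ and of $F|_{D'}$ correspond (it does work, since columns to the right of the deleted one have height below $n$). One further small gap: as written your induction proves only uniqueness; for existence you should not obtain a transversal of $D'$ by restricting $F$ (that is circular), but rather restrict an arbitrary transversal of $D$ after swapping its top-row 1-cell into the forced column — legitimate since the columns involved have full height — and then extend the inductively obtained pattern-avoiding transversal of $D'$ by the forced top cell, verifying that this creates no occurrence of the pattern.
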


Our next goal is to give a more convenient description of the
partial fillings that satisfy Conditions (C1), (C2) and (C3) (which are equal
to (C1'), (C2') and (C3'), respectively). Let $D$ be a Ferrers diagram
with a prescribed set of \di-columns, and with $k$ rows in its
bottom part. We will distinguish two types of rows of $D$, which we
refer to as \emph{rightist rows} and \emph{leftist rows} (see
Figure~\ref{fig-rightist}). The rightist rows are defined inductively
as follows. None of the rows in the top part is rightist. The $k$-th
row (i.e., the highest row in the bottom part) is rightist if and only
if it has at least one cell in the right part of $D$. For any $i<k$,
the $i$-th row is rightist if and only if the number of cells in the
$i$-th row belonging to the right part of $D$ is greater than the
number of rightist rows that are above row~$i$. A row is leftist if it
is not rightist.

\begin{figure}
\includegraphics[scale=0.7]{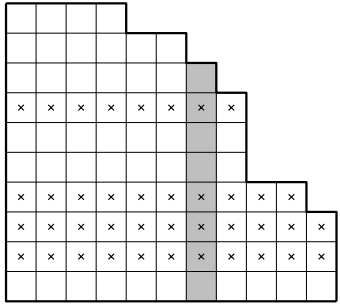}
\caption{A Ferrers diagram with one \di-column, indicated in
  gray. The rows with crosses are the rightist rows
  of~$D$.}\label{fig-rightist}
\end{figure}

The distinction between leftist and rightist rows is motivated by the
following lemma.

\begin{lemma}\label{lem-leftist} 
  Let $D$ be a Ferrers diagram, and let $F$ be a partial transversal of $D$ that
  satisfies (C1) and (C2). The following statements are equivalent.
  \begin{enumerate}
  \item[(a)] $F$ satisfies (C3).
  \item[(b)] All the 1-cells in the leftist rows appear in the left part of~$F$.
  \item[(c)] All the 1-cells in the rightist rows appear in the right part of~$F$.
  \end{enumerate}
\end{lemma}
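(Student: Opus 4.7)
The plan is to introduce the set $S$ of bottom rows whose unique $1$-cell lies in the right part of $F$, and to show that each of (a), (b), (c) is equivalent to the single set equality $S=\{\text{rightist rows of }D\}$. Under (C2), every standard right column has height at most the height $k$ of the leftmost $\di$-column, so each $1$-cell in the right part sits in a bottom row; hence $|S|$ equals the number $s$ of standard columns in the right part.

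The crux is then to show that the number of rightist rows is also $s$. Writing $c_j$ for the number of standard right columns of height at least $j$ and $R_j$ for the number of rightist rows of index at least $j$, the inductive definition gives $R_j=R_{j+1}+[c_j>R_{j+1}]$, from which a routine induction yields $R_j\le c_j$ and in particular $R_1\le s$. For the reverse inequality I would invoke the fact that $F$ itself is a partial transversal: restricting $F$ to the rows of $S$ and to the right sub-diagram exhibits a perfect matching of $s$ rows to $s$ columns in the bipartite graph with edge set $\{(i,m):\ i\le h_m\}$ (where $h_1\ge h_2\ge\dotsb\ge h_s$ are the heights of the standard right columns), so Hall's theorem forces $c_j\ge s-j+1$ for every $j\le s$. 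A downward induction then upgrades this to $R_j\ge s-j+1$ for $j\le s$; the only nontrivial case is $c_j\le R_{j+1}$, in which case the Hall inequality already passes the bound onto $R_{j+1}$. I expect this step---reconciling the recurrence for $R_j$ with the Hall bound for $c_j$---to be the main subtlety of the argument.

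Once the two cardinalities agree, the equivalences collapse onto the set equality $S=\{\text{rightist rows}\}$. Condition (b) says exactly $S\subseteq\{\text{rightist rows}\}$ and condition (c) says the reverse inclusion, and either containment together with the cardinality match forces equality. To tie in (a), I would prove two symmetric counting arguments. Assuming (a), a top-down induction on the bottom rows identifies $S$ with the rightist rows: at each row $i$, if $i$ is rightist then there are strictly more right columns of height $\ge i$ than rightist rows above $i$, so some such column's $1$-cell lies weakly below row $i$ and (C3) prevents it from being strictly below, forcing $i\in S$; and if $i$ is leftist, membership of $i$ in $S$ together with the $R_{i+1}$ rightist rows above $i$ would consume $R_{i+1}+1$ distinct right columns of height $\ge i$, contradicting $c_i\le R_{i+1}$. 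Conversely, assuming $S=\{\text{rightist rows}\}$, any putative (C3) violation with $i<i'\le k$, $i\in S$, $i'\notin S$, and column $\sigma(i)$ of height at least $i'$ would exhibit $R_{i'+1}+1$ right columns of height $\ge i'$, forcing $i'$ to be rightist and contradicting $i'\notin S$. Chaining these implications yields the full three-way equivalence.
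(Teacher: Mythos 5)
Your proposal is correct, and while its overall architecture matches the paper's (both pivot on the cardinality identity ``number of rightist rows $=$ number of non-degenerate right columns,'' use it to make (b) and (c) each force the set equality $S=\{\text{rightist rows}\}$, and then tie (a) to that set equality by counting), you prove the key counting lemma by a genuinely different method. The paper invokes Fact~\ref{fac-12}: it inspects the unique partial transversal $F_{21}$ whose standard columns avoid $21$, observes that there a row has its 1-cell in the right part precisely when it is rightist, and transfers the count to an arbitrary partial transversal by noting that all partial transversals of $D$ have the same number of 1-cells in the right part. You instead stay entirely with the given $F$: you extract the (trivial, necessity) half of Hall's condition, $c_j\ge s-j+1$, from the matching that $F$ itself induces between the rows of $S$ and the non-degenerate right columns, and reconcile it with the recurrence $R_j=R_{j+1}+[c_j>R_{j+1}]$ by a two-sided induction to get $R_1=s$. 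Your version is self-contained (no appeal to Fact~\ref{fac-12} or to a canonical filling), at the cost of a more delicate double induction; the paper buys brevity by outsourcing the count to $F_{21}$. Your treatment of (a) is essentially the paper's argument rearranged: your top-down induction identifying $S$ with the rightist rows is the inductive form of the paper's extremal choice of the highest rightist row with a left 1-cell, and your converse counting argument matches the paper's (c)$\Rightarrow$(a) step almost verbatim. Two small imprecisions to repair: $s$ must count only the \emph{non-degenerate} (nonzero-height) standard columns of the right part, since zero-height standard columns carry no 1-cell and would break both $|S|=s$ and the bound $c_1\ge s$; and the fact that every right column has height at most $k$ follows from the Ferrers shape (column heights are non-increasing), not from (C2) --- what (C2) actually supplies is that any $\di$-column in the right part is degenerate, so every 1-cell of the right part really does lie in a column counted by~$s$.
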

\begin{proof}
  Let us first argue that the statements (b) and (c) are
  equivalent. To see this, notice first that in all the partial transversals of
  $D$, the number of 1-cells in the right part is the same, since each non-degenerate
  column in the right part has exactly one 1-cell. Consequently, all
  the partial transversals of $D$ also have the same number of 1-cells in the
  bottom-left part, because the number of 1-cells in the bottom-left
  part is equal to the number of bottom rows minus the number of
  non-degenerate right columns.

  We claim that the number of rightist rows is equal to the number of non-degenerate
  columns in the right part. To see this, consider the (unique)
  partial transversal $F_{21}$ of $D$ in which no two standard columns contain
  the pattern~$21$. The characterization of Fact~\ref{fac-12} easily
  implies that in $F_{21}$, a row has a 1-cell in the right part, if
  and only if it is a rightist row. Thus, in the partial filling $F_{21}$,
  and hence in any other partial transversal of $D$, the number of rightist rows
  is equal to the number of 1-cells in the right part of~$D$, which is
  equal to the number of non-degenerate right columns.

  Thus, if in a partial transversal $F$ there is a leftist row that has a 1-cell
  in the right part of $D$, there must also be a rightist row with a
  1-cell in the left part of~$D$, and vice versa. In other words,
  conditions (b) and (c) are indeed equivalent for any partial transversal~$F$.

  Assume now that $F$ is a partial transversal that satisfies (a). We claim that
  $F$ satisfies (c) as well. For contradiction, assume that there is a
  rightist row $i$ that contains a 1-cell in the left part of~$F$. Choose
  $i$ as large as possible. Let $j$ be the column
  containing the 1-cell in row~$i$. See Figure~\ref{fig-lem10}.

\begin{figure}
\includegraphics[scale=0.7]{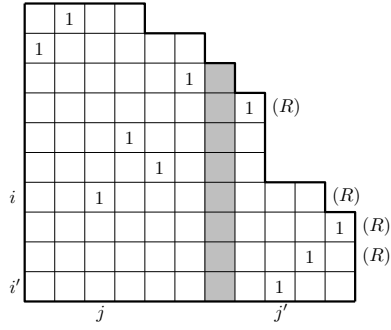}
\caption{An example of a partial transversal violating condition (c) of 
Lemma~\ref{lem-leftist}. Rightist rows are marked by $(R)$.}\label{fig-lem10}
\end{figure}

  Since $i$ is a rightist row, it follows that the number of cells in
  the right part of $i$ is greater than the number of rightist rows
  above~$i$. We may thus find a column $j'$ in the right part of $D$
  that intersects row $i$ and whose 1-cell does not belong to any of
  the rightist rows above row~$i$. Let $i'$ be the row that contains
  the 1-cell in column~$j'$. If $i'<i$, then the two rows $i,i'$ and
  the two columns $j,j'$ induce the pattern that was forbidden by
  (C3), which contradicts statement~(a).

  Thus, we see that $i'>i$. By the choice of $j'$, this implies that
  $i'$ is a leftist row. Furthermore, by the choice of $i$, we know
  that all the rightist rows above $i$, and hence all the rightist
  rows above $i'$, have a 1-cell in the right part. Since row $i'$ has
  a 1-cell in the right part as well, it means that the number of
  cells in the right part of row $i'$ is greater than the number of
  rightist rows above row~$i'$. This contradicts the fact that $i'$ is
  a leftist row. This contradiction proves that (a) implies~(c).

  It remains to show that statement (c) implies statement~(a). Assume $F$ is 
  a partial transversal that satisfies (c), and hence also~(b). For 
  contradiction, assume that $F$ contains the pattern forbidden by 
  statement~(a). Assume that the forbidden pattern is induced by a pair of 
  rows $i<i'$ and a pair of columns $j<j'$, where the column $j'$ is in the 
  right part and the column $j$ in the left part, and the two cells $(i',j)$ 
  and $(i,j')$ are 1-cells, as in Figure~\ref{fig-c3}. 

  By statement (c), the row $i'$ must be
  leftist, since it has a 1-cell in the left part. However, the number
  of cells in the right part of row $i'$ must be greater than the
  number of rightist rows above row $i'$, because all the rightist
  rows above row $i'$ have 1-cells in distinct right columns
  intersecting row $i'$, and all these columns must be different from
  column $j'$, whose 1-cell is in row $i$ below row~$i'$. This
  contradicts the fact that $i'$ is a leftist row, and completes the
  proof of the lemma.
\end{proof}

Lemma~\ref{lem-leftist}, together with Observations~\ref{obs-312}
and~\ref{obs-231}, shows that in any partial transversal avoiding 312 or 231,
each 1-cell is either in the intersection of a rightist row with a
right column, or the intersection of a leftist row and a left column.

The next lemma provides the main ingredient of our proof of
Theorem~\ref{thm-312}.

\begin{lemma}[Key Lemma]\label{lem-key312} 
  Let $k\ge 1$ be an integer, and let $D$ be a proper Ferrers diagram
  with the property that the bottom $k$ rows of $D$ all have the same
  length. Let $\cFk(D,312,21)$ be the set of all (non-partial) transversals
  of $D$ that avoid 312 and have the additional property that their
  bottom $k$ rows avoid 21. Let $\cFk(D,231,12)$ be the set of all (non-partial)
  transversals of $D$ that avoid 231 and have the additional
  property that their bottom $k$ rows avoid 12. Then
  $|\cFk(D,312,21)|=|\cFk(D,231,12)|$.
\end{lemma}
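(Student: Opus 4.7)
I would aim to prove the Key Lemma by producing an explicit bijection between $\cFk(D,312,21)$ and $\cFk(D,231,12)$, modelled on Jel\'\i nek's shape-Wilf bijection~\cite{jel} between $312$- and $231$-avoiding sparse fillings. The proof would combine a decomposition of each side according to the bottom-block configuration with a bijection on the residual ``top part'' consisting of rows above row $k$.

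First, I would exploit that the bottom $k$ rows of $D$ all have the same length $c$ (the number of columns of $D$), so each contains exactly one $1$-cell in any transversal. The $21$-avoidance condition on $\cFk(D,312,21)$ then forces these bottom $1$-cells to lie at $(i,c_i)$ for some $c_1<c_2<\cdots<c_k$; equivalently, the bottom block is completely determined by the $k$-element subset $S=\{c_1,\ldots,c_k\}\subseteq\{1,\ldots,c\}$. A symmetric analysis on $\cFk(D,231,12)$ shows that its bottom block is determined by the same data $S$, but with the row order reversed. Consequently both sides fiber over $k$-element column subsets $S$, and it suffices to biject the fibers over each fixed $S$.

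Next I would identify, via case analysis, the cross-boundary pattern-occurrences forced by the bottom block. On the $312$ side, two obstructions arise: a top $1$-cell strictly to the left of two bottom $1$-cells, and a descending pair of top $1$-cells straddling a bottom column. On the $231$ side, one finds the quite different obstructions: a top $1$-cell strictly between two bottom $1$-cells, and an ascending pair of top $1$-cells strictly to the left of a bottom column. Thus each fiber becomes a set of sparse fillings of the top part of $D$ that avoid the relevant length-three pattern and additionally satisfy explicit column-window conditions determined by $S$.

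The principal obstacle is that the $312$- and $231$-side constraints have genuinely different shapes: the $312$ constraints push all top $1$-cells into the two rightmost column windows determined by $S$ (those lying in columns $>c_{k-1}$), while the $231$ constraints split them between the leftmost and rightmost windows (columns $<c_1$ or $>c_k$). A direct application of Jel\'\i nek's bijection is therefore not immediately available. My plan is to proceed by induction on $k$: the base case $k=1$ is the shape-Wilf-equivalence of $312$ and $231$ established in~\cite{jel}, since the $21$- and $12$-avoidance conditions on a single row are vacuous. For the inductive step, I would peel off the topmost bottom row (row $k$, whose $1$-cell sits in column $c_k$ on the $312$ side and in column $c_1$ on the $231$ side), reducing to a transversal in a smaller Ferrers diagram with $k-1$ equal-length bottom rows. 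The technical crux will be to set up this peel-off so that the column-window obstructions are preserved and the inductive hypothesis can be applied; this is where I expect the bulk of the work to lie.
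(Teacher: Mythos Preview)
Your fiberwise strategy breaks down at the very first step: it is \emph{not} true that the fibers over a fixed column set $S$ have the same size on the two sides, so the reduction ``it suffices to biject the fibers over each fixed $S$'' is invalid. Take $D$ to be the $3\times 3$ square diagram and $k=2$. On the $312$-side, $\cFk(D,312,21)$ consists of the permutation matrices of $123$ and $132$; the positions of the values $1$ and $2$ give $S=\{1,2\}$ and $S=\{1,3\}$, respectively. On the $231$-side, $\cFk(D,231,12)$ consists of $213$ and $321$, with $S=\{1,2\}$ and $S=\{2,3\}$. Thus the fiber over $S=\{1,3\}$ has size $1$ on the $312$-side and size $0$ on the $231$-side, while $S=\{2,3\}$ behaves the other way around. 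Any bijection proving the Key Lemma must therefore move mass between different bottom configurations $S$, and the induction you outline---which keeps $S$ fixed throughout---cannot get off the ground.

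The paper avoids this obstacle by not decomposing by $S$ at all. Instead it passes to de~Mier's matching model, where transversals of $D$ correspond to matchings with a prescribed set of left-vertices, and the bottom-row conditions become ``the edges at the $k$ rightmost vertices form a $k$-nesting'' (for the $312$-side) or ``a $k$-crossing'' (for the $231$-side). The bijection is then a six-step composite: apply Jel\'\i nek's map $\psi$ to pass from $\mcab$-avoiding to $\cC$-avoiding matchings, insert a single new edge covering the $k$ rightmost vertices (this step absorbs the nesting condition into a purely local edge), reverse the matching, apply $\psi^{-1}$, delete the corresponding edge, and reverse again. The crucial point is that the added edge converts the global $k$-nesting constraint into the presence of one specific edge, a condition that is symmetric under reversal; this is what allows the bijection to change the bottom configuration while remaining invertible. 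Your inductive peel-off does not have an analogue of this symmetrisation step, and that is precisely where the argument fails.
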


Before we prove the Key Lemma, let us explain how it implies
Theorem~\ref{thm-312}.

\begin{proof}[Proof of Theorem~\ref{thm-312} from Lemma~\ref{lem-key312}]
  Let $D$ be a Ferrers diagram with a prescribed set of
  \di-columns. Assume that $D$ has at least one partial transversal. Our goal is
  to show that the number of 312-avoiding partial transversals of $D$ is equal to
  the number of its 231-avoiding partial transversals.

  Assume that $D$ satisfies conditions (C1) and (C2), otherwise it has
  no 312-avoiding or 231-avoiding partial transversal. Let $k$ be the number of
  leftist rows in the bottom part of~$D$. Let $D_L$ be the subdiagram
  of $D$ formed by the cells that are intersections of leftist rows
  and left columns of $D$, and let $D_R$ be the subdiagram formed by
  the intersections of rightist rows and right columns. Notice that
  neither $D_L$ nor $D_R$ have any \di-columns, and the $k$
  bottom rows of $D_L$ have the same length.

  By Lemma~\ref{lem-leftist}, in any partial transversal $F$ of $D$ that
  satisfies (C3), each 1-cell of $F$ is either in $D_L$ or
  in~$D_R$. Thus, $F$ can be decomposed uniquely into
  two transversals $F_L$ and $F_R$, induced by $D_L$ and
  $D_R$, respectively. Conversely, if $F_L$ and $F_R$ are any
  transversals of $D_L$ and $D_R$, then the two fillings give
  rise to a unique partial transversal $F$ of $D$ satisfying (C3).

  Let $F$ be a partial transversal of $D$ that satisfies condition (C3).  Note
  that $F$ satisfies condition (C4) of Observation~\ref{obs-312} if
  and only if $F_L$ avoids 312, and $F$ satisfies (C6) if and only if
  $F_L$ avoids 21 in its bottom $k$ rows. Thus, $F$ satisfies (C4) and
  (C6) if and only if $F_L\in \cFk(D_L,312,21)$. Observe also that $F$
  satisfies (C5) if and only if $F_R$ avoids 12. By Fact~\ref{fac-12},
  this determines $F_R$ uniquely.

  By combining the above remarks, we conclude that a partial transversal $F$ of
  the diagram $D$ avoids 312 if and only if $F_L$ belongs
  to the set $\cFk(D_L,312,21)$ and $F_R$ is the unique
  transversal filling of $D_R$ that avoids~12. By analogous reasoning,
  a partial transversal $F'$ of $D$ avoids 231, if and only if its subfilling
  $F'_L$ induced by $D_L$ belongs to $\cFk(D_L,231,12)$ and
  the subfilling $F'_R$ induced by $D_R$ is the unique transversal
  of $D_R$ avoiding~21.

  The Key Lemma asserts that $\cFk(D_L,312,21)$ and $\cFk(D_L,231,12)$
  have the same cardinality, which implies that the number of
  312-avoiding partial transversals of $D$ is equal to the number of its
  231-avoiding partial transversals.
\end{proof}

The rest of this section is devoted to the proof of the Key Lemma.

Although the proof of the Key Lemma could in principle be presented in
the language of fillings and diagrams, it is more convenient and
intuitive to state the proof in the (equivalent) language of 
matchings. This will allow us to apply previously known results on
pattern-avoiding matchings in our proof.

Let us now introduce the relevant terminology.  A
\emph{matching of order $n$} is a graph $M=(V,E)$ on the vertex set
$V=\{1,2,\dotsc,2n\}$, with the property that every vertex is incident
to exactly one edge. We will assume that the vertices of matchings
are represented as points on a horizontal line, ordered from left to
right in increasing order, and that edges are represented as circular
arcs connecting the two corresponding endpoints and drawn above the
line containing the vertices. If $e$ is an edge connecting vertices
$i$ and $j$, with $i<j$, we say that $i$ is the \emph{left-vertex} and
$j$ is the \emph{right-vertex} of~$e$. Clearly, a matching of order
$n$ has $n$ left-vertices and $n$ right-vertices. Let $\eL(M)$ denote
the set of left-vertices of a matching~$M$.

If $M$ is a matching of order $n$, we define the \emph{reversal} of
$M$, denoted by $\rev M$, to be the matching on the same vertex set as
$M$, such that $\{i,j\}$ is an edge of $\rev M$ if and only if
$\{2n-j+1,2n-i+1\}$ is an edge of~$M$. Intuitively, reversing
corresponds to flipping the matching along a vertical axis.

Let $e=ij$ and $e'=i'j'$ be two edges of a matching $M$, with $i<j$
and $i'<j'$. If $i<i'<j<j'$ we say that \emph{$e$ crosses $e'$ from
  the left} and \emph{$e'$ crosses $e$ from the right}. If
$i<i'<j'<j$, we say that \emph{$e'$ is nested below $e$}. Moreover, if
$k$ is a vertex such that $i<k<j$, we say that $k$ is \emph{nested
  below} the edge $e=ij$, or that $e=ij$ \emph{covers} the vertex~$k$.

A set of $k$ edges of a matching is said to form a \emph{$k$-crossing}
if each two edges in the set cross each other, and it is said to form
a \emph{$k$-nesting} if each two of its edges are nested.

If $M=(V,E)$ is a matching of order $n$ and $M'=(V',E')$ a matching of
order $n'$, we say that \emph{$M$ contains $M'$} if there is an
edge-preserving increasing injection from $V'$ to $V$. In other words,
$M$ contains $M'$ if there is a function $f\colon V'\to V$ such that
for each $u,v\in V'$, if $u<v$ then $f(u)<f(v)$ and if $uv$ is an edge
of $M'$ then $f(u)f(v)$ is an edge of~$M$. If $M$ does not contain
$M'$, we say that \emph{$M$ avoids $M'$}. More generally, if $\cF$ is
a set of matchings, we say that $M$ avoids $\cF$ if $M$ avoids all the
matchings in~$\cF$.

Let $\cM_n$ denote the set of all matchings of order $n$. For a set of
matchings $\cF$ and for a set of integers $X\subseteq[2n]$, define the
following sets of matchings:
\begin{align*}
  \cM_n(X)&=\{M\in\cM_n;\ \eL(M)=X\}\\
  \cM_n(X,\cF)&=\{M\in\cM_n(X);\ M\text{ avoids }\cF\}
\end{align*}

If the set $\cF$ contains a single matching $F$, we will write
$\cM_n(X,F)$ instead of $\cM_n(X,\{F\})$.
                                                   
\begin{figure}                                                
\includegraphics[scale=0.5]{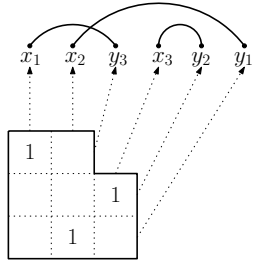}
\caption{The bijection $\mu$ between transversals of Ferrers diagrams and  
matchings. The dotted arrows show the correspondence between rows and columns 
of the diagram and vertices of the matching.}\label{fig-match}
\end{figure}   

De Mier~\cite{adm} has pointed out a one-to-one correspondence between
transversals of (proper) Ferrers diagrams with $n$ rows and $n$ columns
and matchings of order~$n$. This correspondence allows to translate
results on pattern-avoiding transversals of Ferrers diagrams
to equivalent results on pattern-avoiding matchings. We describe the
correspondence here, and state its main properties.

Let $F$ be a transversal of a proper Ferrers diagram $D$.  Let
$n$ be the number of rows (and hence also the number of columns)
of~$D$.  We encode $F$ into a matching $\mu(F)\in \cM_n$ defined as
follows. First, we partition the vertex set $[2n]$ into two disjoint
sets $X(D)=\{x_1<x_2<\dotsb<x_n\}$ and $Y(D)=\{y_1>y_2>\dotsb>y_n\}$,
with the property that $x_j<y_i$ if and only if the $j$-th column of
$D$ intersects the $i$-th row of $D$ (note that the elements of $Y$
are indexed in decreasing order). The diagram $D$ determines $X(D)$
and $Y(D)$ uniquely. Let $\mu(F)$ be the matching whose edge-set is
the set
\[
E=\{x_jy_i;\ F \text{ has a 1-cell in column $j$ and row $i$}\}.
\]                                                  
Figure~\ref{fig-match} shows an example of this correspondence.

We state, without proof, several basic properties of~$\mu$ (see \cite{adm}).

\begin{fact}
  The mapping $\mu$ has the following properties.
  \begin{itemize}
  \item The mapping $\mu$ is a bijection
    between transversals of Ferrers diagrams and matchings, with
    fillings of the same diagram corresponding to matchings with the
    same left-vertices.
    If $F$ is a transversal of a proper Ferrers diagram $D$,
    then $\mu(F)$ is a matching whose left-vertices are precisely the
    vertices from the set~$X(D)$. Conversely, for any matching $M$
    there is a unique proper Ferrers diagram $D$ such that $X(D)$ is
    the set of left-vertices of $M$, and a unique transversal
    $F$ of $D$ satisfying $\mu(F)=M$.
  \item $F$ is a permutation matrix of order $n$ (i.e., a filling of
    an $n\times n$ square diagram) if and only if $\mu(F)$ is a matching
    with $\eL(M)=\{1,2,\dotsc,n\}$.
  \item Assume that $F'$ is a permutation matrix. A filling $F$ avoids the
    pattern $F'$ if and only if the matching $\mu(F)$ avoids the
    matching~$\mu(F')$.
  \item $D$ is a proper Ferrers diagram whose $k$ bottom rows have
    the same length, if and only if $Y(D)$ contains the $k$ numbers
    $\{2n,2n-1,\dotsc,2n-k+1\}$.  In such case, in any matching
    representing a transversal of $D$, all the $k$ rightmost vertices are
    right-vertices.
  \end{itemize}
\end{fact}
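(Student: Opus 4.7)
The plan is to verify the four bullets in turn, in each case by unpacking the definition of $\mu$ and tracking how the cell-to-edge correspondence transports each structural feature.

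\textbf{The bijection.} I would first observe that any partition $[2n] = X \sqcup Y$ with $|X|=|Y|=n$ already determines a Ferrers shape via $\lambda_j := |\{i : y_i > x_j\}|$; this count is weakly decreasing in $j$, so the shape is valid, and it is proper precisely when $1 \in X$ and $2n \in Y$. For a transversal $F$ of $D$, each row and each column contributes exactly one 1-cell, so the edge set $E$ is a perfect matching on $[2n]$ in which every edge $x_j y_i$ satisfies $x_j < y_i$; thus $\eL(\mu(F)) = X(D)$. To invert, given $M \in \cM_n$ set $X := \eL(M)$ and $Y := [2n] \setminus X$; the standard facts that $1 \in \eL(M)$ and $2n \notin \eL(M)$ (the minimum vertex cannot be a right endpoint and the maximum cannot be a left endpoint) combine with the preceding observation to produce the unique proper Ferrers diagram $D$ with $X(D) = X$, after which reading each edge of $M$ as a 1-cell yields the unique transversal $F$ with $\mu(F) = M$.

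\textbf{The square and bottom-row characterizations.} Both reduce to direct computation. $D$ is the $n \times n$ square iff every column has height $n$, iff $x_j < y_i$ for all $i, j \in [n]$, iff $X = \{1, \dots, n\}$; equivalently $\eL(\mu(F)) = \{1, \dots, n\}$. For the bottom-row claim, the bottom $k$ rows of a proper Ferrers diagram have equal length iff every column has height at least $k$, iff $y_k > x_j$ for all $j \in [n]$, iff the $k$ largest elements of $Y$ exceed $\max X$; since $X \cup Y = [2n]$, this is equivalent to $\{2n-k+1, \dots, 2n\} \subseteq Y$, and these indices then lie in $Y = [2n] \setminus \eL(\mu(F))$, i.e., they are right-vertices of $\mu(F)$.

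\textbf{Pattern avoidance.} This is the subtle point and, I expect, the main obstacle. Let $F'$ be a permutation matrix of order $m$, so $\mu(F')$ has vertex set $[2m]$ with $\eL(\mu(F')) = \{1, \dots, m\}$. If $F$ contains $F'$ via rows $i_1 < \cdots < i_m$ and columns $j_1 < \cdots < j_m$, then every cell $(i_a, j_b)$ lies in $D$, which forces $x_{j_b} < y_{i_a}$ for all $a, b$. Letting $\phi : [2m] \to [2n]$ be the unique order-preserving map with image $\{x_{j_1}, \dots, x_{j_m}\} \cup \{y_{i_1}, \dots, y_{i_m}\}$, these inequalities force $\phi(\{1, \dots, m\}) = \{x_{j_1}, \dots, x_{j_m}\} \subseteq X(D)$ and $\phi(\{m+1, \dots, 2m\}) = \{y_{i_m}, \dots, y_{i_1}\} \subseteq Y(D)$; the matrix identity ``submatrix of $F$ on rows $I$ and columns $J$ equals $F'$'' then translates cell-by-cell into an edge-preserving embedding of $\mu(F')$ into $\mu(F)$. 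For the converse, given any order-preserving edge-preserving injection $\phi$ of $\mu(F')$ into $\mu(F)$, each $\phi(l)$ with $l \le m$ is the smaller endpoint of its edge (because every edge of $\mu(F')$ joins $\{1, \dots, m\}$ to $\{m+1, \dots, 2m\}$), so $\phi(\{1, \dots, m\}) \subseteq X(D)$ and $\phi(\{m+1, \dots, 2m\}) \subseteq Y(D)$; decoding these as columns and rows of $F$ yields a square subpattern equal to $F'$, squareness being automatic from the $X$/$Y$-split. The crux is recognizing that this $X$/$Y$-splitting of $\phi$ is automatic, which is exactly what identifies ``square submatrix pattern in $F$'' with ``order-preserving matching embedding into $\mu(F)$''.
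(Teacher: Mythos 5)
Your proof is correct. The paper itself states this fact \emph{without proof}, referring to de Mier~\cite{adm}, so there is no internal argument to compare against; what you supply is the natural direct verification, and it covers all four bullets, including the genuinely delicate one: that for an increasing edge-preserving injection of $\mu(F')$ into $\mu(F)$, the images of $\{1,\dotsc,m\}$ and of $\{m+1,\dotsc,2m\}$ automatically land in $X(D)$ and $Y(D)$ respectively (because each $l\le m$ is the smaller endpoint of its edge), which simultaneously yields the required squareness of the selected submatrix ($x_{j_m}=\phi(m)<\phi(m+1)=y_{i_m}$ forces every selected cell to lie in $D$) and rules out spurious embeddings. Two small glosses, neither a gap: (i) your parenthetical ``proper precisely when $1\in X$ and $2n\in Y$'' conflates two conditions --- in the paper's terminology properness (no zero-height columns) corresponds to $2n\in Y$ alone, while $1\in X$ is what guarantees the diagram has $n$ rows, which is what a transversal needs; both hold automatically when $X=\eL(M)$, which is why your inversion goes through. (ii) In the converse of the pattern-avoidance bullet, the decoded submatrix must equal $F'$ exactly, zeros included, not merely contain its 1-cells; this is immediate but worth saying: $F$ is a transversal, so each selected row $i_a$ has exactly one 1-cell, and the edge-preserving map already places it in the selected column $j_{b(a)}$ prescribed by $F'$, so no stray 1-cells can occur in the chosen square.
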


\begin{figure}
  \includegraphics[scale=0.5]{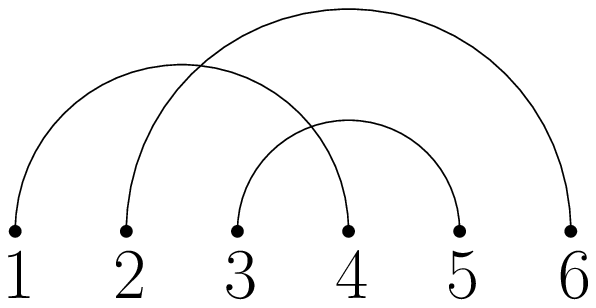}\hfil \includegraphics[scale=0.5]{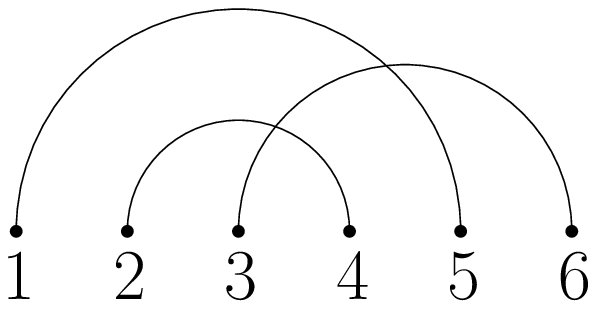}
  \caption{The matching $\mcab$, corresponding to the permutation
    pattern 312 (left), and the matching $\mbca$, corresponding to the
    permutation pattern 231 (right).}\label{fig-m312}
\end{figure}

In the rest of this section, we will say that a matching $M$
\emph{corresponds to} a filling $F$, if $M=\mu(F)$. We will also say
that $M$ corresponds to a permutation $p$ if it corresponds to the
permutation matrix of~$p$.  Specifically, we let $\mcab$ be the
matching corresponding to the permutation 312, and let $\mbca$ be the
matching corresponding to the permutation 231 (see
Fig.~\ref{fig-m312}).

Let $D$ be a proper Ferrers diagram with $n$ rows and $n$ columns, whose
bottom $k$ rows have the same length. To prove the Key Lemma, we need
a bijection between the sets of fillings $\cFk(D,312,21)$ and
$\cFk(D,231,12)$. Let $\cMk(D,312,21)$ be the set of matchings that
correspond to the fillings from the set $\cFk(D,312,21)$, and
similarly let $\cMk(D,231,12)$ be the set of matchings corresponding
to the fillings from $\cFk(D,231,12)$.

By definition, a matching $M$ belongs to $\cMk(D,312,21)$ if and only
if $\eL(M)=X(D)$, $M$ avoids $\mcab$, and the $k$ edges incident to
the rightmost $k$ vertices of $M$ form a $k$-nesting. (Notice that all
the rightmost $k$ vertices of $M$ are right-vertices, since the bottom
$k$ rows of $D$ are assumed to have the same length.) 
Similarly, a matching $M$ belongs to $\cMk(D,231,12)$
if and only if $\eL(M)=X(D)$, $M$ avoids $\mbca$, and the edges
incident to the rightmost $k$ vertices form a $k$-crossing.

Let $M$ be a matching. A sequence of edges $(e_1,e_2,\dotsc,e_p)$ is
called \emph{a chain of order $p$ from $e_1$ to $e_p$}, if for each
$i< p$, the edge $e_i$ crosses the edge $e_{i+1}$ from the left. A
chain is \emph{proper} if each of its edges only crosses its neighbors
in the chain. It is not difficult to see that every chain from $e_1$
to $e_p$ contains, as a subsequence, a proper chain from $e_1$
to~$e_p$.

\begin{figure}[ht]
\includegraphics[width=0.6\textwidth]{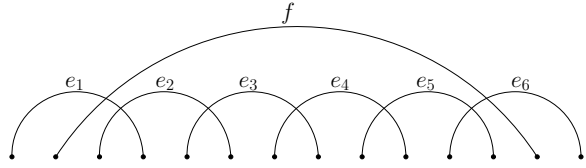}
\caption{The cyclic chain of order 7.}\label{fig-cyclic}
\end{figure}   

A \emph{cyclic chain of order $p+1$} is a $(p+1)$-tuple of edges
$(f,e_1,\dotsc, e_p)$, with the following properties.
\begin{itemize}
\item The sequence $(e_1,\dotsc,e_p)$ is a proper chain.
\item The edge $f$ crosses $e_1$ from the right and $e_p$ from the
  left. Furthermore, for each $i\in\{2,3,\dotsc,p-1\}$, the edge $e_i$
  is nested below~$f$.
\end{itemize}
Figure~\ref{fig-cyclic} shows an example of a cyclic chain of order~7.
The matching of order $p+1$ whose edges form a cyclic chain will be
denoted by~$C_{p+1}$. The smallest cyclic chain is $C_3$, whose three
edges form a 3-crossing. Let $\cC$ denote the infinite set
$\{C_q\colon q\ge 3\}$.

As shown in~\cite{jel}, there is a bijection $\psi$ which maps the set
of $\mcab$-avoiding matchings to the set of $\cC$-avoiding matchings,
with the additional property that each $\mcab$-avoiding matching $M$
is mapped to a $\cC$-avoiding matching $\psi(M)$ with the same order
and the same set of left-vertices. Since the reversal of a
$\mcab$-avoiding matching is an $\mbca$-avoiding matching, while the
reversal of a $\cC$-avoiding matching is again $\cC$-avoiding, it is
easy to see that the mapping $M\mapsto \rev{\psi(\rev M)}$ is a
bijection that maps an $\mbca$-avoiding matching $M$ to a
$\cC$-avoiding matching with the same set of left-vertices.

We will use the bijection $\psi$ as a building block of our bijection
between the sets $\cMk(D,312,21)$ and $\cMk(D,231,12)$. However, before we do
so, we need to describe the bijection~$\psi$, which requires more
terminology.

Let $M$ be a matching on the vertex set $[2n]$. For an integer
$r\in[2n]$, we let $M[r]$ denote the subgraph of $M$ induced by the
leftmost $r$ vertices of~$M$. We will call $M[r]$ \emph{the $r$-th
  prefix of $M$}. The graph $M[r]$ is a union of disjoint edges and
isolated vertices. The isolated vertices of $M[r]$ will be called
\emph{the stubs} of $M[r]$.

If $x$ and $x'$ are two stubs of $M[r]$, with $x<x'$, we say that $x$
and $x'$ are \emph{equivalent in $M[r]$}, if $M[r]$ contains a chain
$(e_1,\dotsc,e_p)$ (possibly containing a single edge) such that $x$
is nested below $e_1$ and $x'$ is nested below~$e_p$. We will also
assume that each stub is equivalent to itself. As shown in~\cite{jel},
this relation is indeed an equivalence relation on the set of
stubs. The blocks of this equivalence relation
will be simply called \emph{the blocks of $M[r]$}. It is easy to see
that if $x$ and $x'$ are stubs belonging to the same block, and $x''$
is a stub satisfying $x<x''<x'$, then $x''$ belongs to the same block
as $x$ and~$x'$. Figure~\ref{fig-stubs} shows an example.
                                            
\begin{figure}[ht]
\includegraphics[width=0.8\textwidth]{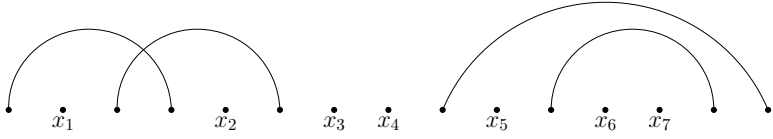}
\caption{A prefix of a matching with seven stubs, forming four equivalence classes 
$\{x_1,x_2\}$, $\{x_3\}$, $\{x_4\}$, and $\{x_5,x_6,x_7\}$.}\label{fig-stubs}
\end{figure}

For a matching $M\in\cM_n$, the sequence of prefixes $M[1],
M[2],\dotsc, M[2n]=M$ will be called \emph{the generating sequence of
  $M$}. We will interpret this sequence as a sequence of steps of an
algorithm that generates the matching $M$ by adding vertices
one-by-one, from left to right, starting with the graph $M[1]$ that
consists of a single isolated vertex.

Each prefix in the generating sequence defines an equivalence on the
set of its stubs. To describe the bijection $\psi$, we first need to
point how the blocks of these equivalences change when we pass from
one prefix in the sequence to the next one.

Clearly, $M[1]$ consists of a single stub, so its equivalence has a
single block $\{1\}$. Let us now show how the equivalence defined by
$M[r]$ differs from the equivalence defined by $M[r-1]$. If a vertex
$r>1$ is a left-vertex of $M$, then the graph $M[r]$ is obtained from
$M[r-1]$ by adding a new stub~$r$. In such case, we say that $M[r]$ is
obtained from $M[r-1]$ by \emph{an L-step}. It is obvious that each
block of $M[r-1]$ is also a block of $M[r]$, and apart from that
$M[r]$ also has the singleton block~$\{r\}$.

Assume now that $r>1$ is a right-vertex of $M$. In this situation, we
say that $M[r]$ is obtained from $M[r-1]$ by \emph{an
  R-step}. Clearly, $M[r]$ is obtained from $M[r-1]$ by adding the
vertex $r$ and connecting it by an edge to a stub $s$ of~$M[r-1]$.  We
say that the stub $s$ is \emph{selected in step $r$}. In such case,
$s$ is no longer a stub in $M[r]$. Let $B_1, B_2,\dotsc, B_b$ be the
blocks of $M[r-1]$ ordered left to right, and assume that $s$ belongs
to a block~$B_j$. Then $B_1, B_2,\dotsc, B_{j-1}$ are also blocks in
$M[r]$. The set $(B_j\setminus\{s\})\cup B_{j+1}\cup\dotsb\cup B_b$ is
either empty or forms a block of~$M[r]$. Notice that the sizes of the
blocks of $M[r]$ only depend on the value of $j$ and on the sizes of
the blocks of~$M[r-1]$. We define two special
types of R-steps: a \emph{maximalist} R-step is an R-step in which the
selected stub is the rightmost stub of its block (i.e., $s=\max B_j$),
while a \emph{minimalist} R-step is an R-step in which the selected
stub is the leftmost stub in its block.

To connect our terminology with the results from~\cite{jel}, we need a
simple lemma.
\begin{lemma}\label{lem-equiv} 
  Let $M\in \cM_n$ be an $\mcab$-avoiding matching, let $r\in [2n]$ be
  an integer. Let $s$ and $s'$ be two distinct stubs of $M[r]$. The
  two stubs $s$ and $s'$ belong to the same block, if and only if
  $M[r]$ has an edge $e$ that covers both $s$ and~$s'$.
\end{lemma}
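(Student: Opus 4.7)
The first direction is the easy one: if some edge $e$ of $M[r]$ covers both $s$ and $s'$, then the length-one chain $(e)$ witnesses that $s$ and $s'$ are equivalent, placing them in the same block.

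For the converse, given that $s$ and $s'$ lie in the same block, I pick a chain in $M[r]$ whose first edge $e_1$ covers $s$ and whose last edge $e_p$ covers $s'$. Invoking the remark in the paper that every chain contains a proper subchain with the same endpoints, I may assume $(e_1,\ldots,e_p)$ is proper. Writing $e_i=a_ib_i$, properness together with the chain condition gives $a_1<a_2<\cdots<a_p$ and $b_1<b_2<\cdots<b_p$, and $b_i<a_j$ whenever $j-i\ge 2$. The case $p=1$ is immediate, so assume $p\ge 2$; then $a_1<s<b_1$ and $a_p<s'<b_p$. Now introduce $I=\{i\in[p]:a_i<s\}$ and $J=\{j\in[p]:s'<b_j\}$. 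Since the $a_i$ are strictly increasing, $I$ is a nonempty initial segment of $[p]$; since the $b_j$ are strictly increasing, $J$ is a nonempty final segment. If $I\cap J\ne\emptyset$, any $k$ in the intersection satisfies $a_k<s<b_k$ together with $a_k\le a_p<s'<b_k$, so the single edge $e_k$ covers both $s$ and $s'$ and we are done.

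The heart of the argument is to rule out the case $I\cap J=\emptyset$, which is where $\mcab$-avoidance enters. Suppose for contradiction that $I\cap J=\emptyset$; then $\max I<\min J$, and I set $i=\max I$ and $k=i+1\le\min J$. Since $s$ is a stub of $M[r]$, it is a left-vertex of $M$ whose partner $w_s$ satisfies $w_s>r\ge b_k$. From $i\in I$, $k\notin I$, and the fact that $s$ as a stub is distinct from every edge endpoint of $M[r]$, I get $a_i<s<a_k$; and from $k-1=i\notin J$ combined with the fact that $s'$, being a stub, is distinct from the right-vertex $b_i$, I get $s'>b_i$. The six endpoints of the three edges $e_i$, $e_k$, and $e_s:=sw_s$ then lie on the line in the order $a_i<s<a_k<b_i<b_k<w_s$, and the induced sub-matching has edges at positions $\{1,4\}$, $\{3,5\}$, and $\{2,6\}$, which is precisely $\mcab$. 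This contradicts the hypothesis that $M$ avoids $\mcab$, so $I\cap J$ is nonempty and the proof is complete. The main obstacle is the careful bookkeeping of the strict inequalities coming from stubness and the proper-chain conditions; once those are nailed down, the forbidden $\mcab$-configuration is handed to us on the vertices $a_i, s, a_k, b_i, b_k, w_s$.
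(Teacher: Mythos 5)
Your proof is correct and follows essentially the same route as the paper's: both arguments hinge on the identical forbidden configuration, namely two consecutive crossing chain edges together with the $M$-edge at the stub $s$ (whose right endpoint exceeds $r$, since $s$ is a stub of $M[r]$), which sit in the relative order $a_i<s<a_k<b_i<b_k<w_s$ and hence form a copy of $\mcab$. The only difference is the extremal device used to locate that configuration---the paper takes a shortest chain and applies the argument to $e_1,e_2$, whereas you take $i=\max I$ on an arbitrary proper chain---and a couple of your steps (the reduction to a proper chain, the disjointness $b_i<a_j$ for $j-i\ge 2$, and the deduction $s'>b_i$) are never actually used.
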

\begin{proof} 
  By definition, if two stubs are covered by a single edge of $M[r]$,
  they are equivalent and hence belong to the same block. To prove the
  converse, assume that $s< s'$ are stubs of $M[r]$ that belong to the
  same block. Let $C=(e_1,\dotsc,e_p)$ be a chain in $M[r]$, such that
  $e_1$ covers $s$ and $e_p$ covers~$s'$. Choose $C$ to be as short as
  possible. If $C$ consists of a single edge, then $s$ and $s'$ are
  both covered by this edge and we are done. For contradiction, assume
  that $C$ has at least two edges. The edge $e_2$ does not cover $s$,
  because if it did, the chain $(e_2,\dotsc, e_p)$ would contradict
  the minimality of~$C$. Let $f$ be the edge of $M$ incident to the
  vertex~$s$. Necessarily, the right endpoint of $f$ is greater than
  $r$, otherwise $s$ would not be a stub in~$M[r]$. In particular, in
  the matching $M$, $f$ intersects $e_1$ from the right, and $e_2$ is
  nested below~$f$. Thus, the three edges $e_1, e_2$ and $f$ form in
  $M$ a copy of $\mcab$, contradicting the assumption that $M$ is
  $\mcab$-avoiding.
\end{proof}

Combining Lemma~\ref{lem-equiv} with \cite[Lemma 3]{jel}, we get the
following result that gives characterizations of $\mcab$-avoiding and
$\cC$-avoiding matchings.

\begin{fact}\label{fac-jel} 
  A matching $M\in\cM_n$ avoids the pattern $\mcab$ if and only if,
  for every right-vertex $r>1$ of $M$, $M[r]$ is obtained from
  $M[r-1]$ by a minimalist R-step. A matching $M\in\cM_n$ avoids the
  set of patterns~$\cC$ if and only if, for every right-vertex $r>1$
  of $M$, $M[r]$ is obtained from $M[r-1]$ by a maximalist R-step.
\end{fact}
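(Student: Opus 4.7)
The plan is to derive Fact~\ref{fac-jel} by combining Lemma~\ref{lem-equiv} with \cite[Lemma 3]{jel}. Presumably \cite[Lemma 3]{jel} already characterizes $\mcab$- and $\cC$-avoiding matchings in terms of which stubs may be selected at each R-step, phrased via the block equivalence on prefixes. Lemma~\ref{lem-equiv} identifies those blocks, in the $\mcab$-avoiding case, with the natural ``covered by a common edge'' relation, so the characterization can be restated as a minimalist/maximalist condition on blocks as defined here. For each of the two parts I will separately establish the ``only if'' and ``if'' directions.

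For the ``only if'' direction of part~(1), I would argue as follows. Suppose $M$ avoids $\mcab$ and some R-step at vertex $r>1$ selects a stub $s$ that is not the leftmost in its block $B$ of $M[r-1]$. Pick $s' \in B$ with $s' < s$. By Lemma~\ref{lem-equiv} there is an edge $e=(a,b)$ of $M[r-1]$ covering both $s$ and $s'$, so $a<s'<s<b<r$. Since $s'$ remains a stub at step $r-1$ and $s$ (not $s'$) is selected at $r$, the edge of $M$ incident to $s'$ has its right endpoint at some $t>r$. Then the three edges $(a,b)$, $(s',t)$, $(s,r)$ have endpoints in the order $a<s'<s<b<r<t$, which is exactly the endpoint layout of $\mcab$ (edges at positions $(1,4)$, $(2,6)$, $(3,5)$), contradicting avoidance. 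For the converse, I would take the contrapositive and induct on $n$: assuming the statement holds for smaller matchings, if every R-step in $M$ is minimalist then each prefix $M[r-1]$ is $\mcab$-avoiding, so Lemma~\ref{lem-equiv} applies to it, and any putative $\mcab$-occurrence in $M$ with edges ordered $i_1<i_2<i_3<j_1<j_3<j_2$ would force the R-step at $r=j_1$ to select $i_1$ from a block containing a stub strictly smaller than $i_1$ (supplied, after a short case analysis using the nested edges $e_2,e_3$ and any covering edge, by Lemma~\ref{lem-equiv}), contradicting minimality.

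Part~(2) follows the same template with ``leftmost'' and ``rightmost'' interchanged. The second half of \cite[Lemma 3]{jel} supplies the analogue of Lemma~\ref{lem-equiv} in the $\cC$-avoiding setting, where selecting a non-rightmost stub from a block of size $q-1$ at an R-step witnesses the formation of a cyclic chain of order $q$, i.e.\ a copy of $C_q$. The main obstacle I anticipate is precisely this second part: Lemma~\ref{lem-equiv} is stated only for $\mcab$-avoiding matchings, so the correct block-level characterization for $\cC$-avoidance must either be imported verbatim from \cite{jel} or reproved by a dual argument in which chains of crossings, rather than single covering edges, supply the equivalences; and the cyclic-chain obstruction built from a non-maximalist R-step needs to be exhibited explicitly, which is less immediate than the three-edge construction that sufficed for part~(1).
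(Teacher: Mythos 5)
Your top-level plan coincides exactly with the paper's treatment: the paper gives no standalone proof of this Fact, but obtains it precisely by combining Lemma~\ref{lem-equiv} with \cite[Lemma 3]{jel}, so deferring the block-level characterizations to the citation is legitimate, and your self-contained ``only if'' argument for the $\mcab$ half is correct (the endpoint layout $a<s'<s<b<r<t$ with edges $(a,b),(s,r),(s',t)$ is indeed a copy of $\mcab$, whose edges are $\{1,4\},\{3,5\},\{2,6\}$).

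However, your converse for part~(1) misidentifies the violating step, and the claim as stated is false. Take $M=\mcab$ itself: with your labels $i_1<i_2<i_3<j_1<j_3<j_2 = 1<2<3<4<5<6$, the prefix $M[3]$ consists of three stubs forming three singleton blocks, so the R-step at $r=j_1=4$ selects $i_1=1$ as the leftmost (indeed only) stub of its block --- that step \emph{is} minimalist, and no case analysis will extract a stub smaller than $i_1$ from its block. The non-minimalist step is at $r=j_3$: the edge $(i_1,j_1)$ is already complete in $M[j_3-1]$ and covers both stubs $i_2<i_3$, so they lie in a common block \emph{by the definition} of the equivalence (a chain of one edge); selecting $i_3$ at step $j_3$ then violates minimality. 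Note this fix needs only the trivial direction of the block equivalence, so your appeals to Lemma~\ref{lem-equiv} and to induction on $n$ in this direction are both unnecessary and problematic: Lemma~\ref{lem-equiv} assumes $M$ avoids $\mcab$, which fails in the contrapositive, and the prefixes $M[r-1]$ are not matchings, so an inductive hypothesis about smaller matchings does not apply to them. For part~(2) you candidly leave the maximalist/$\cC$ half unproved; that matches the paper, which also imports it from \cite[Lemma 3]{jel}, but your sketch that a non-maximalist R-step ``witnesses'' a copy of some $C_q$ would still require the minimal-proper-chain argument guaranteeing that the intermediate edges of the resulting cyclic chain are nested below the new edge $(s,r)$, which you do not supply.
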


We are now ready to state the following key result from~\cite{jel},
which describes the properties of the bijection~$\psi$.

\begin{fact}\label{fac-psi} 
  There is a bijection $\psi$ between $\mcab$-avoiding and
  $\cC$-avoiding matchings. If $M$ is an $\mcab$-avoiding matching of
  order $n$, and $N=\psi(M)$ its corresponding $\cC$-avoiding
  matching, then the following holds.
  \begin{itemize}
  \item $M$ and $N$ have the same set of left-vertices (and hence the
    same size).
  \item For any vertex $r\in[2n]$, the prefix $M[r]$ has the
    same number of blocks as the prefix $N[r]$. Moreover, if
    $B_1,\dotsc, B_b$ are the blocks of $M[r]$ in left-to-right order,
    and $B'_1,\dotsc, B'_b$ are the blocks $N[r]$ in left-to-right
    order, then $|B_i|=|B'_i|$ for each~$i\le b$.
  \item Assume that $r+1$ is a right-vertex of $M$ (and hence also of
    $N$), and that $B_1,\dotsc, B_b$ and $B'_1,\dotsc,B'_b$ are blocks
    of $M[r]$ and $N[r]$, as above. If $M[r+1]$ is obtained from
    $M[r]$ by selecting a stub $s$ from a block $B_j$, then $N[r+1]$
    is obtained from $N[r]$ by selecting a stub $s'$ from the
    corresponding block~$B'_j$. In view of Fact~\ref{fac-jel}, we must
    then have $s=\min B_j$ and $s'=\max B'_j$.
  \end{itemize}
\end{fact}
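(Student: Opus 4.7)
The plan is to construct $\psi$ inductively by processing the vertices of $M$ from left to right, building $N=\psi(M)$ in lockstep so that the left-to-right sequences of block sizes of $M[r]$ and $N[r]$ agree for every $r$. This invariant is exactly what the three bulleted conclusions demand, so once it is established for every prefix, the conclusions follow immediately.

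The recursive rule is as follows. Set $N[1]$ to be the single-vertex matching (same as $M[1]$). At an L-step (when $r$ is a left-vertex of $M$), declare $r$ to be a left-vertex of $N$ as well; in both prefixes a new singleton block is appended on the right, so the invariant is preserved. At an R-step (when $r$ is a right-vertex of $M$), Fact~\ref{fac-jel} forces the chosen stub in $M$ to be minimalist, namely $s=\min B_j$ for some uniquely determined block index $j$ of $M[r-1]$. By the invariant, $N[r-1]$ has a block $B'_j$ in the same position and of the same size, and I define the R-step of $N$ at $r$ to select the rightmost stub $s'=\max B'_j$; this is a maximalist R-step, so Fact~\ref{fac-jel} will certify that the resulting full matching $N$ is $\cC$-avoiding.

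The crucial point to check is that the invariant survives each R-step. The key observation is that, as recalled in the excerpt, the blocks of the new prefix are $B_1,\dotsc,B_{j-1}$ together with the (possibly empty) merged block $(B_j\setminus\{s\})\cup B_{j+1}\cup\dotsb\cup B_b$, whose size is $|B_j|-1+|B_{j+1}|+\dotsb+|B_b|$. This depends only on $j$ and on the old list of block sizes, not on which particular stub of $B_j$ is removed. Since the $M$-side removes $\min B_j$ and the $N$-side removes $\max B'_j$ from blocks with the same index and the same size, the new left-to-right lists of block sizes coincide. An L-step trivially preserves the invariant by appending a singleton to each list.

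Bijectivity is then read off by running the construction backwards: given a $\cC$-avoiding $N$, every R-step is forced to be maximalist by Fact~\ref{fac-jel}, so reading the sequence of block indices $j$ attached to the R-steps uniquely determines a sequence of minimalist R-steps on the $M$-side, and the same invariant shows this reverse procedure lands in the $\mcab$-avoiding world. The main technical obstacle is the invariant-preservation argument in the previous paragraph, whose content really comes from Lemma~\ref{lem-equiv}: that lemma guarantees, for $\mcab$-avoiding matchings, the concrete description of blocks as sets of stubs jointly covered by a chain, which is what underlies the merging rule used above (with the analogous statement needed on the $\cC$-side to justify that maximalist R-steps produce exactly the same block arithmetic). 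Once Lemma~\ref{lem-equiv} and its $\cC$-analogue are in place, everything else reduces to a routine induction on~$r$.
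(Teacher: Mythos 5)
Your proposal is correct, but note that the paper itself does not prove this statement: Fact~\ref{fac-psi} is imported wholesale from~\cite{jel}, with only the closing remark that ``the properties of $\psi$ listed above in fact determine $\psi$ uniquely.'' Your construction is precisely the content behind that remark --- it turns the uniqueness observation into an existence proof by building $N$ in lockstep with $M$, maintaining equality of the left-to-right block-size sequences of all prefixes, using Fact~\ref{fac-jel} to force minimalist steps on the $M$-side and to certify $\cC$-avoidance of the maximalist-step output, and recovering injectivity/surjectivity by running the same invariant backwards. This buys a self-contained proof (modulo Fact~\ref{fac-jel}) where the paper offers a citation, and it is essentially the generating-tree argument underlying~\cite{jel}. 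One correction to your final paragraph: you overstate the dependence on Lemma~\ref{lem-equiv}. The block-merging rule --- that an R-step selecting a stub $s$ from $B_j$ leaves $B_1,\dotsc,B_{j-1}$ intact and replaces $B_j,\dotsc,B_b$ by the single (possibly empty) block $(B_j\setminus\{s\})\cup B_{j+1}\cup\dotsb\cup B_b$, so that the new sizes depend only on $j$ and the old sizes --- is stated in the paper for \emph{arbitrary} matchings, with no avoidance hypothesis, and follows directly from the chain definition of stub equivalence. Lemma~\ref{lem-equiv}, whose single-edge characterization of blocks is special to $\mcab$-avoiding matchings, is not needed here, and in particular no ``$\cC$-analogue'' of it is required to justify the arithmetic on the $N$-side. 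Two small checks worth making explicit in your induction: the invariant gives $|B'_j|=|B_j|\ge 1$, so $\max B'_j$ exists at every R-step; and since $M[2n]$ has no stubs, the invariant forces $N[2n]$ to have none either, so the construction really terminates in a perfect matching with $\eL(N)=\eL(M)$.
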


The properties of $\psi$ listed above in fact determine $\psi$
uniquely.

Finally, we are ready to present the bijection between
$\cMk(D,312,21)$ and $\cMk(D,231,12)$. Recall that the matchings from
these two sets have the same set of
left-vertices $X(D)$ and the same set of right-vertices
$Y(D)=[2n]\setminus X(D)$. Let us write $X(D)=\{x_1<x_2<\dotsc<x_n\}$
and $Y(D)=\{y_1>y_2>\dotsc >y_n\}$. Recall also that by assumption,
the rightmost $k$ right-vertices $y_1,\dotsc,y_k$ are to the right of
any left-vertex.

The bijection we present is a composition of several steps, with the
correctness of each step proved separately. An example is shown in 
Figure~\ref{fig-biject}.

\noindent\textbf{Step 1: apply $\psi$.} Use the bijection~$\psi$ to
map the set $\cMk(D,312,21)$ bijectively to the set $S_1=\{\psi(M);\,
M\in\cMk(D,312,21)\}$. As shown in Lemma~\ref{lem-step1} below, $S_1$
is precisely the set of all the matchings $N$ satisfying the following
properties:
\begin{itemize}
\item[(P1)] $N$ avoids $\cC$.
\item[(P2)] $\eL(N)=X(D)$.
\item[(P3)] In the prefix $N[2n-k]$ of $N$, each block has a single stub.
\item[(P4)] The edges of $N$ incident to $y_1,\dotsc,y_k$ form a
  $k$-nesting.
\end{itemize}

\noindent\textbf{Step 2: add edge.} For a matching $M\in S_1$, let
$M^+$ be the matching obtained from $M$ by adding two new vertices
$x_\new$ and $y_\new$ and a new edge $e_\new=x_\new y_\new$, such that
the edge $e_\new$ covers precisely the vertices $y_1,\dotsc, y_k$. We
relabel the $2n+2$ vertices of $M^+$, without altering their
left-to-right order, so that their labels correspond to the integers
$1,\dotsc, 2n+2$ in their usual order. With this labeling, we have
$x_\new=2n-k+1$ and $y_\new=2n+2$.

Let $S_2$ be the set $\{M^+;\; M\in S_1\}$. Clearly, all the matchings
in $S_2$ share the same set of left-vertices and the same set of right-vertices.
We call these sets $X^+$ and $Y^+$, respectively. It is
also clear that the mapping $M\mapsto M^+$ is a bijection between
$S_1$ and $S_2$. In Lemma~\ref{lem-step2}, we will show that $S_2$ is
precisely the set of all the matchings $N$ that satisfy the following
conditions:
\begin{itemize}
\item[(R1)] $N$ avoids $\cC$.
\item[(R2)] $\eL(N)=X^+$.
\item[(R3)] $N$ contains the edge $e_\new=\{2n-k+1,2n+2\}$.
\end{itemize}

\noindent\textbf{Step 3: reverse.} Recall that $\rev M$ denotes the
reversal of a matching~$M$. Let $S_3$ be the set $\{\rev M;\; M\in
S_2\}$. All the matchings in $S_3$ have the same set of left-vertices
and right-vertices, denoted by $\rev{X^+}$ and $\rev{Y^+}$,
respectively. From the previously stated properties of $S_2$ it
follows that $S_3$ contains precisely the matchings $N$ satisfying
these conditions:
\begin{itemize}
\item[($\rev{\text{R1}}$)] $N$ avoids $\cC$.
\item[($\rev{\text{R2}}$)] $\eL(N)=\rev{X^+}$.
\item[($\rev{\text{R3}}$)] $N$ contains the edge $\{1,k+2\}$.
\end{itemize}

\noindent\textbf{Step 4: apply $\psi^{-1}$.} Let $S_4$ be the set
$\{\psi^{-1}(M);\; M\in S_3\}$. In Lemma~\ref{lem-step4}, we show that
$S_4$ contains precisely the matchings $N$ satisfying these three
conditions:
\begin{itemize}
\item[(S1)] $N$ avoids $\mcab$.
\item[(S2)] $\eL(N)=\rev{X^+}$.
\item[(S3)] $N$ contains the edge $\{1,k+2\}$.
\end{itemize}

\noindent\textbf{Step 5: remove edge.} For a matching $M\in S_4$, let
$M^-$ denote the matching obtained from $M$ by removing the edge
$\{1,k+2\}$ together with its endpoints. Relabel the vertices of $M^-$
by integers $1,2,\dotsc,2n$, in their usual order. Let $S_5$ be the
set $\{M^-;\; M\in S_4\}$. All the matchings in $S_5$ have the same
set of left-vertices, denoted by~$\rev X$. We show in
Lemma~\ref{lem-step5} that $S_5$ contains precisely the following
matchings $N$:
\begin{itemize}
\item[(S1$^-$)] $N$ avoids $\mcab$.
\item[(S2$^-$)] $\eL(N)=\rev{X}$.
\item[(S3$^-$)] The edges incident to the leftmost $k$ vertices of $N$
  form a $k$-crossing.
\end{itemize}

\noindent\textbf{Step 6: reverse back.} The properties of $S_5$ stated
above imply that the matchings in $S_5$ are exactly the reversals of
the matchings in $\cMk(D,231,12)$. Thus, applying reversal to the
elements of $S_5$ we complete the bijection from $\cMk(D,312,21)$ to
$\cMk(D,231,12)$.

\begin{figure}
\includegraphics[height=0.9\textheight]{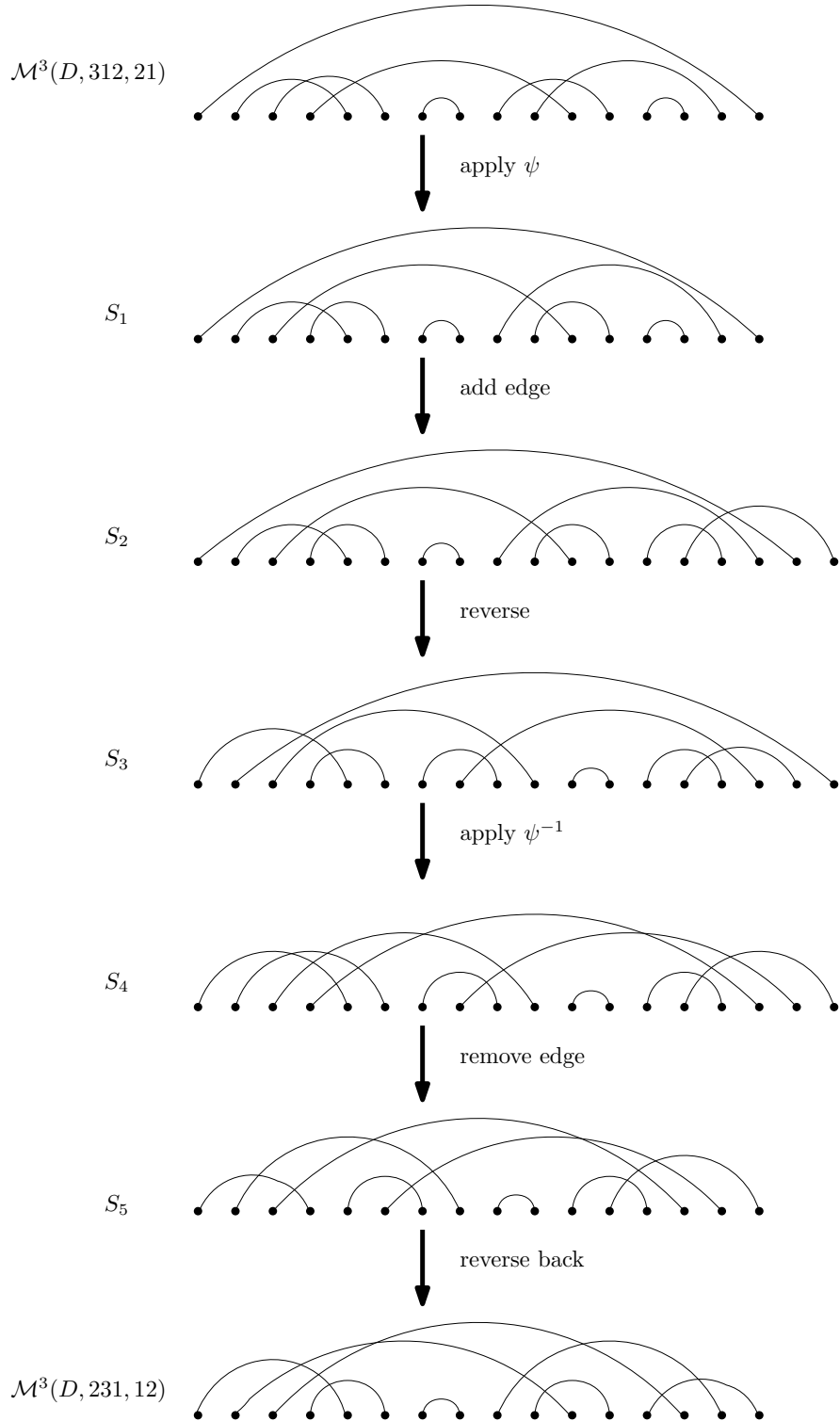}
\caption{The six steps of a bijection from $\cMk(D,312,21)$ to $\cMk(D,231,12)$ (with 
$k=3$).}\label{fig-biject}
\end{figure}

Next, we will prove the correctness of the individual steps. The proofs are 
mostly routine.

\begin{lemma}\label{lem-step1}
  The set $S_1$ contains precisely those matchings that satisfy the four
  properties (P1)--(P4).
\end{lemma}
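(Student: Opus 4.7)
The proof proceeds by verifying both inclusions. Properties (P1) and (P2) are immediate from Fact~\ref{fac-psi}: $\psi$ bijects $\mcab$-avoiders with $\cC$-avoiders, and preserves the set of left-vertices. The substantive work is to show that the right-end $k$-nesting condition defining $\cMk(D,312,21)$ corresponds, under $\psi$, precisely to the conjunction of (P3) and (P4) on $N=\psi(M)$.

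The key observation concerns the last $k$ R-steps. In $M$, the right-vertices $v_1<v_2<\dotsb<v_k$ at positions $2n-k+1,\dotsc,2n$ are attached to left-endpoints $s^M_1,\dotsc,s^M_k$, and a $k$-nesting forces $s^M_1>s^M_2>\dotsb>s^M_k$. Since $M$ is $\mcab$-avoiding, each $s^M_i=\min B_{j_i}$, where $B_{j_i}$ is the block of $M[2n-k+i-1]$ selected at step $i$. Because the blocks in any prefix are arranged left-to-right by their stubs, and an R-step merges the selected block only with blocks to its right, every block to the right of $B_{j_i}$ at step $i$ has minimum strictly greater than $s^M_i$. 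A strictly decreasing sequence of selected stubs therefore forces $j_1>j_2>\dotsb>j_k$, which combined with the fact that $M[2n-k]$ has exactly $k$ stubs distributed among $b$ nonempty blocks yields $b=k$, each block a singleton, and $j_i=k-i+1$. Transferring this via Fact~\ref{fac-psi} (block-size preservation plus step correspondence), $N[2n-k]$ has $k$ singleton blocks, proving (P3); and the maximalist R-steps of $N$ pick these blocks in the same right-to-left order, so the $s^N_i$ are strictly decreasing and (P4) holds.

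The reverse inclusion mirrors this. Given $N$ satisfying (P1)--(P4), set $M=\psi^{-1}(N)$. Then (P1) and (P2) ensure $M$ avoids $\mcab$ with $\eL(M)=X(D)$, and (P3) together with the block-size preservation of $\psi$ yields $k$ singleton blocks in $M[2n-k]$. Property (P4), given the singleton structure of $N[2n-k]$, uniquely forces $N$'s R-step selection order through the blocks to be right-to-left; by Fact~\ref{fac-psi} the same order is used in $M$, and since $M$'s blocks are singletons the minima coincide with the only stubs, so $M$'s selected stubs are strictly decreasing and produce a $k$-nesting at the right end. The main obstacle is to keep the two regimes straight---minimalist in $M$, maximalist in $N$---and to recognise that (P3) and (P4) are genuinely independent: a single non-singleton block of size two in $N[2n-k]$ can still produce a $2$-nesting at the right end of $N$ by selecting its larger stub first, yet the corresponding $M$ would produce a crossing, so (P3) is essential for excluding such $N$ from $S_1$.
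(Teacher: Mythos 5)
Your proof is correct, and its overall architecture matches the paper's: (P1) and (P2) come from Fact~\ref{fac-psi}, block sizes are transferred between $M$ and $N=\psi(M)$, and the right-end $k$-nesting is recognized as equivalent to selecting the rightmost stub at each of the last $k$ R-steps. The one place where you genuinely diverge is the proof of (P3) for $M$ in the forward direction. The paper argues by contradiction via Lemma~\ref{lem-equiv}: if two stubs $s<s'$ of $M[2n-k]$ lay in a common block, they would be covered by a single edge $e$, and since the edges at $s$ and $s'$ end among $y_1,\dotsc,y_k$ and are therefore nested, $e$ together with these two edges would form a copy of $\mcab$. You never invoke Lemma~\ref{lem-equiv}; instead you combine the minimalist-step characterization (Fact~\ref{fac-jel}) with the strictly decreasing selected stubs $s^M_1>\dotsb>s^M_k$ to force the selected blocks to move strictly leftward (a merged block always has minimum exceeding the previously selected stub, so later selections must come from original blocks further left), and then count: $k$ stubs in $b$ nonempty blocks with $b\ge k$ distinct selected indices gives $b=k$ and all blocks singletons. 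Both arguments are sound; the paper's is shorter because Lemma~\ref{lem-equiv} is already in hand, while yours has the advantage of simultaneously pinning down the selection order $j_i=k-i+1$, which is precisely what the proof of (P4) then consumes, so (P3) and (P4) emerge from a single analysis of the last $k$ steps. Your reverse direction coincides with the paper's. Your closing remark is also correct and worth keeping: a size-two block in $N[2n-k]$ can satisfy (P4) under maximalist steps (select the larger stub first, producing a nesting) while its $\psi$-preimage, being minimalist, produces a crossing; this shows (P4) does not imply (P3) on the $\cC$-avoiding side, and explains the asymmetry whereby on the $\mcab$-avoiding side the $k$-nesting forces singleton blocks automatically.
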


\begin{proof}
  Let $N$ be a matching from the set $S_1$. Let $M\in\cMk(D,312,21)$ be
  the preimage of $N$ under~$\psi$. The properties of $\psi$ stated in
  Fact~\ref{fac-psi} directly show that $N$ satisfies (P1) and (P2).
  
  We now show that $N$ satisfies (P3). Fact~\ref{fac-psi} shows that
  $N$ satisfies (P3) if and only if $M$ satisfies (P3). It is thus
  enough to prove (P3) for~$M$.

  In the matching $M$, the $k$ edges incident to $y_1,\dotsc, y_k$
  form a $k$-nesting, by the definition of $\cMk(D,312,21)$. Assume
  that $M$ does not satisfy (P3), i.e., in the prefix
  $M[2n-k]$, there are two stubs $s<s'$ belonging to the same
  block. By Lemma~\ref{lem-equiv}, this means that $s$ and $s'$ are
  both covered by a single edge $e\in M[2n-k]$. Let $f$ and $f'$ be
  the edges of $M$ incident to $s$ and $s'$, respectively. The right
  endpoints of $f$ and $f'$ must belong to $\{y_1,\dotsc,y_k\}$, which
  means that $f$ and $f'$ are nested. This means that $e$, $f$, and
  $f'$ form a copy of $\mcab$ in $M$, which is impossible.

  Next, we show that $N$ satisfies (P4), i.e., that the edges of $N$
  incident to $y_1,\dotsc,y_k$ form a $k$-nesting. This is equivalent
  to saying that for every $i\in [k]$, the prefix
  $N[2n-i+1]$ is obtained from $N[2n-i]$ by adding the right-vertex
  $y_i$ and connecting it to the rightmost stub of $N[2n-i]$. We know
  that the edges of $M$ incident to $\{y_1,\dotsc,y_k\}$ form a
  $k$-nesting. Hence, for each $i\in [k]$, $M[2n-i+1]$ is obtained from
  $M[2n-i]$ by an R-step in which the rightmost stub in $M[2n-i]$ is
  selected. By the properties of $\psi$, we also select the rightmost
  stub whenever we create $N[2n-i+1]$ from $N[2n-i]$. This shows that
  $N$ has property~(P4).

  We now show that every matching satisfying the four
  properties (P1)--(P4) belongs to $S_1$. Let $N$ be a matching
  satisfying (P1)--(P4). Since $N$ is $\cC$-avoiding, we may define
  $M=\psi^{-1}(N)$. To show that $N$ belongs to $S_1$, we need to
  prove that $M$ belongs to $\cMk(D,312,21)$. The properties of $\psi$
  guarantee that $M$ is $\mcab$-avoiding and that $\eL(M)=X(D)$. It
  remains to show that rightmost $k$ vertices of $M$ are incident to a
  $k$-nesting.

  Since $N$ satisfies (P3), so does $M$. Moreover, since $N$ satisfies
  (P4), we know that for each $i\le k$, the prefix
  $N[2n-i+1]$ is obtained from $N[2n-i]$ by adding the vertex $y_i$
  and connecting it to the rightmost stub of $N[2n-i]$. From this
  description, we easily notice that for each $i\le k$, each block in
  the matching $N[2n-i]$ is a singleton. By the properties of $\psi$,
  each block of $M[2n-i]$ is also a singleton, and $M[2n-i+1]$ is
  created form $M[2n-i]$ by connecting $y_i$ to the rightmost stub of
  $M[2n-i]$. This shows that in $M$, the vertices $y_1,\dotsc, y_k$
  are indeed incident to a $k$-nesting, and hence $M\in
  \cMk(D,312,21)$.
\end{proof}

\begin{lemma}\label{lem-step2} 
  A matching $N$ belongs to $S_2$ if and only if it satisfies
  (R1)--(R3).
\end{lemma}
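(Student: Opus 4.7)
The plan is to prove both implications separately. For the forward direction, I assume $N=M^+$ for some $M\in S_1$ and verify (R1)--(R3) directly. Conditions (R2) and (R3) are immediate from the construction of $M^+$. For (R1), the key observation is that the new edge $e_\new$ covers exactly the $k$ vertices $y_1,\dotsc,y_k$ and its endpoints are placed so that $e_\new$ does not cross any edge of $M$. Since by (P4) the edges of $M$ incident to $y_1,\dotsc,y_k$ form a $k$-nesting, $e_\new$ simply nests over a $k$-nesting of pairwise non-crossing edges. Any hypothetical cyclic chain in $M^+$ either (i) does not use $e_\new$, contradicting (P1) for $M$, or (ii) uses $e_\new$ in a middle or chain position, which is impossible since $e_\new$ does not cross any edge of $M$, or (iii) uses $e_\new$ as the outer edge $f$, forcing at least two mutually crossing edges to lie among the edges incident to $y_1,\dotsc,y_k$, contradicting the $k$-nesting.

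For the reverse direction, I start from an $N$ satisfying (R1)--(R3) and define $M$ by deleting the edge $e_\new$ together with its endpoints and relabeling the remaining $2n$ vertices by $1,\dotsc,2n$ in order. I must show $M\in S_1$, i.e., that $M$ satisfies (P1)--(P4). Properties (P1) and (P2) are clear: $M$ is an induced subgraph of $N$, so it inherits the $\cC$-avoidance, and the left-vertex set is exactly $X^+\setminus\{x_\new\}=X(D)$ after relabeling. For (P4), I argue that every edge of $N$ whose right endpoint lies in $\{2n-k+2,\dotsc,2n+1\}$ crosses $e_\new$ from the left; if two such edges crossed each other, then together with $e_\new$ they would form a $3$-crossing $C_3\in\cC$, contradicting (R1). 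Hence these $k$ edges are pairwise nested, which translates via the relabeling into (P4) for $M$.

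The main obstacle, and the heart of the argument, is (P3): the claim that every block of $N[2n-k]$ is a singleton. I plan to argue by contradiction. Assume two stubs $s_i<s_j$ of $N[2n-k]$ lie in the same block, and choose a witnessing proper chain $(f_1,\dotsc,f_p)$ of minimum length, with $s_i$ nested below $f_1$ and $s_j$ nested below $f_p$. Since every stub of $N[2n-k]$ is the left endpoint of an edge with right endpoint in $\{2n-k+2,\dotsc,2n+1\}$ (by (R3), the vertex $y_\new=2n+2$ is already paired), we obtain edges $e_i,e_j$ of $N$ joining $s_i,s_j$ to distinct right endpoints, and by (P4) these edges are nested with $e_j$ inside $e_i$. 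Minimality of the chain forces $a_k>s_i$ for $k\geq 2$ and $b_k<s_j$ for $k\leq p-1$, where $f_k=(a_k,b_k)$. Combining with $b_k\leq 2n-k$ and the position of $e_\new=(2n-k+1,2n+2)$, I will verify that
\[
(e_i;\ f_1,f_2,\dotsc,f_p,e_j,e_\new)
\]
is a cyclic chain of order $p+3$, with $e_i$ playing the role of the outer edge $f$: the sequence $(f_1,\dotsc,f_p,e_j,e_\new)$ is a proper chain (only consecutive members cross, since $b_k<s_j$ separates earlier $f_k$'s from $e_j$, and $b_k\leq 2n-k$ separates all $f_k$'s from $e_\new$); $e_i$ crosses $f_1$ from the right and $e_\new$ from the left; and each of $f_2,\dotsc,f_p,e_j$ lies strictly inside $(s_i,r_i)$. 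This produces a copy of $C_{p+3}\in\cC$ in $N$, contradicting (R1).

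Finally, it remains to check that the construction $N\mapsto M$ is inverse to $M\mapsto M^+$, so that $N=M^+$. This is straightforward once (P1)--(P4) are established, because the edge $e_\new$ and its endpoints are uniquely determined by (R3), and the block structure around position $2n-k$ in $N$ exactly matches the configuration produced by the addition step. The delicate step in the whole argument is the cyclic-chain construction for (P3); the rest of the verification is bookkeeping about positions and relabeling.
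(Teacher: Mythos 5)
Your backward direction is essentially sound, and in fact your explicit cyclic-chain construction for (P3) is more detailed than what the paper itself writes there (the paper merely asserts: ``If $M$ satisfies (P4) but fails (P3), then $N$ contains a cyclic chain of length at least four''). The genuine gap is in your forward direction, and it stems from a false ``key observation'': it is \emph{not} true that $e_\new$ crosses no edge of $M$. By construction $e_\new=\{2n-k+1,2n+2\}$ covers precisely $y_1,\dotsc,y_k$, while the $k$ edges of $M$ incident to these vertices have their left endpoints among the first $2n-k$ positions, i.e.\ to the left of $x_\new=2n-k+1$; hence every one of these $k$ edges crosses $e_\new$ from the left. Consequently your case (ii) is not vacuous: $e_\new$ can perfectly well occur as the \emph{last} chain edge $e_p$ of a putative cyclic chain $(f,e_1,\dotsc,e_p)$, since the cyclic-chain conditions only require $f$ and $e_{p-1}$ to cross $e_p$ from the left, and any of the $k$ nesting edges does exactly that. (Your case (iii), $e_\new=f$, is indeed impossible, but for the simpler positional reason that $f$ must cross $e_p$ from the left, which would require an edge ending to the right of $y_\new=2n+2$; the ``two mutually crossing nesting edges'' justification you give is not the right reason.)

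The case $e_\new=e_p$ is precisely the hard case, and it cannot be dismissed using only (P1), (P2) and (P4): your own backward argument for (P3) exhibits configurations satisfying those three properties whose extension contains a cyclic chain running through $e_\new=e_p$, so any forward proof that never invokes (P3) -- as yours does not -- would prove a false statement. The paper's proof handles this case via (P3): if $e_\new=e_p$, then $f$ and $e_{p-1}$ both cross $e_\new$ from the left, hence both belong to the $k$-nesting and are nested with each other; this forces $p\ge 3$ (a chain of at least four edges), and then the left endpoints $s,s'$ of $f$ and $e_{p-1}$ are stubs of $M[2n-k]$, covered by $e_1$ and $e_{p-2}$ respectively, so the proper chain $e_1,\dotsc,e_{p-2}$ (which lies entirely in $M[2n-k]$, since by properness none of its edges may cross $e_p$) places $s$ and $s'$ in the same block of $M[2n-k]$, contradicting (P3). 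Inserting this argument repairs your forward direction; the remainder of your write-up, including the (P4)-via-$C_3$ step and the cyclic chain of order $p+3$ for (P3), goes through.
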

\begin{proof}
  Suppose $N\in S_2$. Let $M$ be a matching from $S_1$ such that
  $N=M^+$. By construction, $N$ satisfies (R2) and (R3). We need to
  show that $N$ also satisfies (R1), i.e., that it is
  $\cC$-avoiding. For contradiction, assume that $N$ contains a copy
  $C$ of a cyclic chain formed by $p+1$ edges
  $(f,e_1,\dotsc,e_p)$. Recall that in a cyclic chain, the sequence
  $(e_1,\dotsc,e_p)$ is a proper chain, and $f$ crosses $e_1$ from the
  right and $e_p$ from the left, while the edges $e_2,\dotsc,e_{p-1}$
  are nested below~$f$.

  Since $M$ is $\cC$-avoiding, $C$ must contain the new edge
  $\{2n-k+1, 2n+2\}$. Necessarily, the new edge is the edge $e_p$,
  which is incident to the rightmost vertex of $C$. In the matching
  $M$, the edges incident to $y_1,\dotsc,y_k$ form a $k$-nesting. In
  particular, the edges $f$ and $e_{p-1}$ are nested, and hence $C$
  has at least four edges. Let $s$ and $s'$ be the left endpoints of
  the edges $f$ and $e_{p-1}$, respectively. Consider now the prefix
  $M[2n-k]$. This prefix contains the nonempty
  proper chain $e_1,\dotsc,e_{p-2}$ (possibly consisting of a single
  edge), and the two vertices $s$ and $s'$ are stubs of
  $M[2n-k]$. Since $s$ is covered by $e_1$ and $s'$ is covered by
  $e_{p-2}$, the two stubs belong to the same block of $M[2n-k]$,
  which is impossible, since the matching $M\in S_1$ must satisfy
  (P3).

  We conclude that every matching from $S_2$ satisfies (R1)--(R3).

  To prove the converse, assume that $N$ is a matching satisfying
  (R1)--(R3). By (R3), there is a matching $M$ such that $M^+=N$. We
  need to show that $M$ belongs to $S_1$, i.e., that it satisfies
  (P1)--(P4). It is clear that $M$ satisfies (P1) and (P2). If $M$
  fails (P4), then $N$ must contain a 3-crossing, which is impossible,
  since a 3-crossing is a special case of a cyclic chain. If $M$
  satisfies (P4) but fails (P3), then $N$ contains a cyclic chain of
  length at least four, which is also impossible. Thus $M$ belongs to
  $S_1$, and hence $N$ belongs to $S_2$, as claimed.
\end{proof}

\begin{lemma}\label{lem-step4} 
  A matching $N$ belongs to $S_4$ if and only if it satisfies
  (S1)--(S3).
\end{lemma}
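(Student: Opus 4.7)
The plan is to verify (S1)--(S3) for any $N \in S_4$ and, conversely, to check that every matching satisfying (S1)--(S3) lies in $S_4$. Properties (S1) and (S2) come for free from Fact~\ref{fac-psi}: the bijection $\psi$ sends $\mcab$-avoiding matchings to $\cC$-avoiding matchings and preserves the left-vertex set, so $\psi^{-1}$ does the same in reverse. Thus $(\rev{\text{R1}})\Leftrightarrow(\text{S1})$ and $(\rev{\text{R2}})\Leftrightarrow(\text{S2})$. All that remains is to show that $\psi^{-1}$ (and symmetrically $\psi$) preserves the presence of the single distinguished edge $\{1,k+2\}$.

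The key observation is that vertex~$1$ is the leftmost vertex of any matching and therefore cannot be covered by any edge. Consequently, whenever vertex~$1$ is a stub of some prefix, it lies alone in its block, and since it is also the leftmost stub, that block is the leftmost one. In particular, for any $\cC$-avoiding matching $M$ and any prefix $M[r]$ in which vertex~$1$ is a stub, the leftmost block $B_1$ of $M[r]$ equals $\{1\}$; the analogous statement holds for $\mcab$-avoiding matchings.

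The main step is to show that vertex~$1$ is matched at the same step in $M$ as in $N := \psi^{-1}(M)$. Suppose not, and say vertex~$1$ is matched at step $r_N$ in $N$ and at a strictly later step $r_M$ in $M$ (the opposite inequality is symmetric). The R-step at position $r_N$ in $N$ uses the minimalist rule and selects vertex~$1$, so the block selected is $B'_1$. Fact~\ref{fac-psi} then tells us that in $M$ at step $r_N$ the block of the same index is selected, namely the leftmost block $B_1$ of $M[r_N-1]$. Since vertex~$1$ is still a stub of $M[r_N-1]$, the key observation gives $B_1=\{1\}$, whence the maximalist rule in $M$ selects vertex~$1$ at step $r_N$, forcing $r_M=r_N$, a contradiction.

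Therefore $M$ contains the edge $\{1,k+2\}$ if and only if $N$ does, so $(\text{S3})\Leftrightarrow(\rev{\text{R3}})$. Combined with the equivalences for (S1) and (S2), this proves both directions of the lemma. The only non-routine ingredient is the singleton-block property of vertex~$1$; everything else reduces to direct application of Fact~\ref{fac-psi}.
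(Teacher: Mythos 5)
Your proof is correct, and it reaches the key step---preservation of the edge $\{1,k+2\}$ under $\psi^{-1}$---by a genuinely different mechanism than the paper, although both rest on the block-correspondence property of Fact~\ref{fac-psi}. The paper argues locally at the single step from $M[k+1]$ to $M[k+2]$: since the first $k+1$ vertices of any matching in $S_3$ are left-vertices (part of ($\rev{\text{R2}}$)), the prefix $M[k+1]$ consists of $k+1$ stubs and contains no edges at all, so \emph{all} its blocks are trivially singletons; the edge $\{1,k+2\}$ then says this step selects the leftmost singleton block, and Fact~\ref{fac-psi} transfers that verbatim to $N$. You instead prove the stronger invariant that $\psi$ preserves the step at which vertex~$1$ is matched for an \emph{arbitrary} $\mcab$-avoiding/$\cC$-avoiding pair, using the observation that vertex~$1$ can never be nested below an edge and hence, whenever it is a stub, forms the leftmost block $\{1\}$ on its own---so the minimalist and maximalist rules coincide there. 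Your contradiction argument comparing $r_N$ and $r_M$ is sound; the one implicit point worth making explicit is that you apply the block-correspondence of Fact~\ref{fac-psi} in both directions (from the $\mcab$-avoiding side to the $\cC$-avoiding side and back), which is legitimate because each R-step selects a stub from exactly one block, so the stated implication is in fact an equivalence. The trade-off: the paper's computation is shorter and avoids any contradiction, but leans on the specific structure of $\rev{X^+}$; your argument never uses (S2)/($\rev{\text{R2}}$) for this step and yields the more general fact that $\psi$ preserves the edge incident to vertex~$1$, which would apply equally to an edge $\{1,t\}$ for any $t$.
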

\begin{proof}
  Choose $N\in S_4$, and set $M=\psi(N)$. By definition, $M$ belongs
  to $S_3$, so it satisfies
  ($\rev{\text{R1}}$)--($\rev{\text{R3}}$). It follows directly that
  $N$ satisfies (S1) and (S2).

  We know that $M$ satisfies ($\rev{\text{R3}}$). Consider the R-step from
  $M[k+1]$ to $M[k+2]$. Since $M[k+1]$ consists of $k+1$ stubs, all
  its blocks are singletons. Since $M$ has the edge $\{1,k+2\}$, the
  prefix $M[k+2]$ has been constructed from $M[k+1]$ by
  selecting the leftmost stub of $M[k+1]$ and connecting it to the
  vertex~$k+2$. By the properties of $\psi$, this means that $N[k+2]$
  was obtained from $N[k+1]$ in the same way, and in particular, $N$
  contains the edge $\{1,k+2\}$. We conclude that $N$ satisfies
  (S1)--(S3).

  The same argument shows that every matching satisfying (S1)--(S3)
  belongs to~$S_4$.
\end{proof}

\begin{lemma}\label{lem-step5}
  A matching $N$ belongs to $S_5$ if and only if it satisfies
  (S1$^-$)--(S3$^-$).
\end{lemma}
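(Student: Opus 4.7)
The plan is to prove both directions of the equivalence by tracking how removing (resp.\ adjoining) the edge $\{1,k+2\}$ interacts with $\mcab$-avoidance.

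For the forward direction I will start with $M\in S_4$ and set $N=M^-$. Condition (S1$^-$) is immediate since deleting an edge preserves $\mcab$-avoidance. For (S2$^-$) I will unwind the explicit form of $\rev{X^+}$ obtained in Steps~2 and~3 to see that $\{1,2,\dots,k+1\}\subseteq\rev{X^+}$; so vertices $1,\dots,k+1$ are all left-vertices of $M$, and removing vertex~$1$ together with its partner~$k+2$ and renumbering converts $\rev{X^+}$ into~$\rev{X}$. For (S3$^-$), let $w_2,\dots,w_{k+1}$ be the partners in $M$ of the left-vertices $2,\dots,k+1$. Each $w_i>k+2$ because positions $\le k+2$ are already occupied, and I will show $w_2<\dots<w_{k+1}$: if instead some $i<j$ satisfy $w_i>w_j$, then the three edges $\{1,k+2\},\{i,w_i\},\{j,w_j\}$ arrange their endpoints in the order $1<i<j<k+2<w_j<w_i$, which is precisely an $\mcab$-pattern, contradicting (S1). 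After relabeling, the crossings among $\{i,w_i\}_{i=2}^{k+1}$ become the required $k$-crossing on the leftmost $k$ vertices of~$N$.

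For the converse, given $N$ satisfying (S1$^-$)--(S3$^-$), I will construct $M$ by inserting a new left-vertex at position~$1$ and a new right-vertex at position~$k+2$ (shifting $N$'s vertices $1,\dots,k$ up by $1$ and vertices $\ge k+1$ up by $2$), and adjoining the edge $\{1,k+2\}$. By construction $M^-=N$, and both (S2) and (S3) hold; the only substantive point is (S1). Any occurrence of $\mcab$ in $M$ must use the new edge (otherwise it would lie entirely in $N$), and since the new edge has the leftmost left-endpoint it must play the role of the $\{1,4\}$-edge in $\mcab$. The other two edges of the pattern would then form a nested pair with left endpoints $i<j$ in $\{2,\dots,k+1\}$ and right endpoints $w_j<w_i$ greater than $k+2$. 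But (S3$^-$) forces the edges incident to the leftmost $k$ vertices of $N$ into a $k$-crossing, which excludes any such nested pair, yielding the required contradiction.

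The main obstacle is the symmetry between (S3) and (S3$^-$): the edge $\{1,k+2\}$ is precisely what converts a nesting among the inner edges into an $\mcab$-pattern, so the inner edges must cross rather than nest. The label-bookkeeping needed for (S2) and (S2$^-$) is routine once the relation between $\rev{X}$ and $\rev{X^+}$ (with the appropriate shift) has been set up.
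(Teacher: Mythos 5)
Your proof is correct and follows essentially the same route as the paper: the forward direction rests on the same observation that a nested pair among the edges at the leftmost $k$ vertices, together with the edge $\{1,k+2\}$ crossing both from the left, yields the forbidden pattern $\mcab$, and the converse is the same argument run backwards. The only difference is that you spell out the label bookkeeping and the converse in full, where the paper dismisses the latter as ``easy to see''; your details (in particular, that the new edge must play the role of the $\{1,4\}$-edge of $\mcab$ because vertex $1$ is the global minimum) check out.
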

\begin{proof} 
  Choose $N\in S_5$, and let $M$ be the matching from $S_4$ such that
  $N=M^-$. Clearly, $N$ satisfies (S1$^-$) and (S2$^-$). Assume for
  contradiction that $N$ fails (S3$^-$). In such case, $N$ has two
  nested edges $e_1$ and $e_2$, whose left endpoints are among the
  leftmost $k$ vertices of~$N$. Since the leftmost $k$ vertices of $N$
  are all left-vertices, we know that in the matching $M$, the two
  edges $e_1$ and $e_2$ are both crossed from the left by the edge
  $\{1,k+2\}$, forming the pattern $\mcab$ forbidden by~(S1).

  It is easy to see that any matching satisfying (S1$^-$)--(S3$^-$) belongs to~$S_5$.
\end{proof}

This completes the proof of Lemma~\ref{lem-key312}, and hence also of
Theorem~\ref{thm-312}.

\section{The $k$-Wilf-equivalence of patterns of length $k+2$}
\label{sec-bax}

We will now consider the structure of pattern-avoiding partial
permutations in which the number of holes is close to the length of
the forbidden pattern.
                                               
Let us begin by an easy observation.

\begin{observation}\label{obs-l}
Let $p$ be a pattern of length $\ell$. Obviously any partial permutation with 
at least $\ell$ holes contains~$p$. Almost as obviously, a 
partial permutation with $\ell-1$ holes and of length at least 
$\ell$, contains $p$ as well. In particular, 
$s_n^k(p) = 0$ for every $k\ge \ell-1$ and $n\ge\ell$, and all patterns of length $\ell$ are 
$k$-Wilf-equivalent.                                  
\end{observation}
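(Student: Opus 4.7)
The plan is to prove both claims by exhibiting, for each relevant partial permutation $\pi$, an explicit extension $\sigma$ that contains an occurrence of the pattern $p$. The main leverage is the following simple observation about extensions: given $\pi\in\sym_n^k$, an extension is determined by choosing a $k$-subset $V\subseteq[n]$ of values to place at the hole positions and a bijection from the hole positions to $V$; once these choices are made, the values at the non-hole positions are forced so as to have the same relative order as $\pi$.

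For the first claim ($k\ge\ell$), I would pick any $\ell$ hole positions $h_{j_1}<\dotsb<h_{j_\ell}$, choose any $\ell$ values $b_1<\dotsb<b_\ell$ in $[n]$, extend them to a $k$-subset $V$ by adding $k-\ell$ arbitrary values, and then define the bijection so that $h_{j_i}$ receives the value $b_{p_i}$ for each $i$, with the remaining holes receiving the remaining values of $V$ arbitrarily. By construction, positions $h_{j_1},\dotsc,h_{j_\ell}$ host an occurrence of $p$ in the resulting extension.

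For the second claim ($k=\ell-1$ and $n\ge\ell$), exactly one non-hole position must participate in the occurrence of $p$. I would pick any non-hole position $i^*$ with $\pi_{i^*}=v$; let $q_1<\dotsb<q_\ell$ be the sorted list of the $k$ holes together with $i^*$, and let $t$ be the index for which $q_t=i^*$. Set $c=v+p_t-1$, which lies in $[n]$ because $1\le v\le n-k$ and $1\le p_t\le\ell=k+1$. Now choose $V$ to consist of any $p_t-1$ elements of $\{1,\dotsc,c-1\}$ together with any $\ell-p_t$ elements of $\{c+1,\dotsc,n\}$; the count of available values on either side of $c$ is sufficient, again by the inequalities on $v$ and $p_t$. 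With this choice, $c$ is automatically the $v$-th smallest element of $[n]\setminus V$ (so every extension with hole-values $V$ satisfies $\sigma_{i^*}=c$) and simultaneously the $p_t$-th smallest of $\{c\}\cup V$. Defining the bijection from the $\ell-1$ holes to $V$ so that the hole $q_j$ (for $j\ne t$) receives the $p_j$-th smallest element of $\{c\}\cup V$ produces an extension whose values at $q_1,\dotsc,q_\ell$ realize the pattern $p$.

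The conclusions then follow at once: $s_n^k(p)=0$ for every pattern $p$ of length $\ell$ whenever $k\ge\ell-1$ and $n\ge\ell$, and for $n<\ell$ the quantity $s_n^k(p)$ equals $|\sym_n^k|$ independently of $p$, so all patterns of length $\ell$ are $k$-Wilf-equivalent in this regime as well. I do not anticipate any genuine obstacle; the only real bookkeeping is checking the inequalities that justify the choice of $c$ and the existence of the required elements of $V$ in the second claim.
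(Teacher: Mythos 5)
Your proposal is correct: both constructions work, the key fact that an extension is determined by the value set $V$ at the holes plus a bijection onto it is exactly right, and the arithmetic justifying $c=v+p_t-1\in[n]$ and the availability of $p_t-1$ values below $c$ and $\ell-p_t$ values above it checks out (using $v\ge 1$ and $v\le n-k$, respectively). The paper states this observation without proof, declaring both facts obvious, and your explicit extensions are precisely the straightforward argument the authors have in mind, so you have simply supplied the omitted details, including the correct remark that for $n<\ell$ all patterns give $s_n^k(p)=|\sym_n^k|$, which completes the $k$-Wilf-equivalence claim.
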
  

In the rest of this section, we will deal with $k$-Wilf-equivalence of
patterns of length $\ell=k+2$. As we will see, an important part 
will be played by Baxter permutations, which we now define.

\begin{definition} 
  A permutation $p\in \sym_\ell$ is called a \emph{Baxter permutation},
  if there is no four-tuple of indices $a<b<c<d\in [\ell]$ such that
  \begin{itemize}
  \item $c=b+1$, and
  \item the subpermutation $p_a,p_b,p_c,p_d$ is order-isomorphic to $2413$ or to $3142$.
  \end{itemize}
  In the terminology of Babson and Steingr\'{\i}msson~\cite{BS},
  Baxter permutations are exactly the permutations avoiding simultaneously the 
  two patterns 2-41-3 and 3-14-2.
\end{definition}

Baxter permutations were originally introduced by G. Baxter~\cite{bax} in
1964, in the study of common fixed
points of commuting continuous functions~\cite{bax,boy}. Later, it has
been discovered that Baxter permutations are also closely related to
other combinatorial structures, such as plane bipolar
orientations~\cite{bon}, noncrossing triples of lattice
paths~\cite{fel}, and standard Young tableaux~\cite{dul}. An explicit
formula for the number of Baxter permutations has been found by Chung
et al.~\cite{chu}, with several later refinements~\cite{mal,vie,dul2}.

To deal with $k$-Wilf equivalence of patterns of length $k+2$, we first need 
to introduce more notation. Let $\pi\in \sym_n^H$ be a partial permutation, with 
$|H|=k$. Let $h_1<h_2<\dotsb<h_k$ be the elements of~$H$.  Let $I$ denote the 
set $[n]\setminus H$, i.e., $I$ is the set of indices of the non-holes 
of $\pi$. We may decompose the set $I$ into $k+1$ (possibly empty) intervals 
$I_1, I_2,\dotsc, I_{k+1}$, by defining $I_1=\{i\in I;\; i<h_1\}$, 
$I_{k+1}=\{i\in I;\; i>h_k\}$, and for each $a\in\{2,\dotsc,k\}$, $I_a=\{i\in 
I;\; h_{a-1}<i<h_a\}$.

\begin{lemma}\label{lem-bax1} 
  Let $\ell$ and $n$ be integers, let $k=\ell-2$. Let $p=p_1\dotsb
  p_\ell$ be a permutation and let $\pi=\pi_1\dotsb\pi_n$ be a
  partial permutation with $k$ holes. Assume that $H, I,
  I_1,\dotsc, I_{k+1}$ are as above. The partial permutation $\pi$ avoids the
  pattern $p$ if and only if for each two distinct indices $i\in I_a$
  and $j\in I_b$ such that $i<j$, the relative order of $\pi_i$ and
  $\pi_j$ is different from the relative order of $p_a$ and~$p_{b+1}$ 
  (i.e., $\pi_i<\pi_j\iff    p_a>p_{b+1}$).

  Consequently, for each such $p$, $n$ and $H$, we have $s_n^H(p)\le
  1$.
\end{lemma}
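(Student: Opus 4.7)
The plan is to prove the two directions of the biconditional separately, and then to derive $s_n^H(p) \le 1$ from the forward direction.

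For the implication ``condition holds implies $\pi$ avoids $p$'', I would argue by contradiction. Assume some extension $\sigma$ of $\pi$ has an occurrence of $p$ at positions $q_1 < \dots < q_\ell$. Since $\pi$ has only $k = \ell - 2$ holes, the occurrence uses at least two non-hole positions $i_1 < \dots < i_m$ (with $m \ge 2$), appearing at pattern positions $u_1 < \dots < u_m$. Writing $U'_r$ for the number of holes of $\pi$ strictly before $i_r$ that are \emph{not} used by the occurrence, and $i_r \in I_{a_r}$, a direct count gives $a_r = u_r - r + U'_r + 1$. Set $f(r) = r - U'_r$, so that $f(r) = 1$ is equivalent to $a_r = u_r$ and $f(r) = 2$ is equivalent to $a_r = u_r - 1$. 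The core combinatorial observation is that there must exist an index $r_0$ with $f(r_0) = 1$ and $f(r_0+1) = 2$: we have $f(1) \le 1$, $f(m) \ge 2$ (the latter because there are only $m-2$ unused holes in total), and $f(r+1) \le f(r) + 1$ in general. Taking $r_0$ to be the largest $r$ with $f(r) \le 1$ then forces $f(r_0) = 1$, and, since $f(r_0 + 1) \le f(r_0) + 1 = 2$ while $f(r_0 + 1) > 1$, also $f(r_0 + 1) = 2$. Applying the hypothesized condition to the pair $(i_{r_0}, i_{r_0+1})$ with $a = a_{r_0} = u_{r_0}$ and $b = a_{r_0+1} = u_{r_0+1} - 1$ yields $\pi_{i_{r_0}} < \pi_{i_{r_0+1}} \iff p_{u_{r_0}} > p_{u_{r_0+1}}$; on the other hand, the fact that $\sigma$ is an extension together with the fact that the occurrence standardizes to $p$ gives the opposite equivalence, which is the desired contradiction.

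For the implication ``condition fails implies $\pi$ contains $p$'', I would construct an extension containing $p$ explicitly. Let $i \in I_a$ and $j \in I_b$ with $i < j$ violate the condition, so the relative order of $\pi_i$ and $\pi_j$ matches that of $p_a$ and $p_{b+1}$. The tuple of positions
\[
h_1 < h_2 < \dots < h_{a-1} < i < h_a < \dots < h_{b-1} < j < h_b < \dots < h_k
\]
is in strict increasing order by the definitions of $I_a$ and $I_b$, and consists of exactly $\ell$ positions: $a-1$ holes, then $i$, then $b - a$ holes, then $j$, then $k - b + 1$ holes. Reading the pattern values $p_1, \dots, p_\ell$ along this tuple places $p_a$ at position $i$ and $p_{b+1}$ at position $j$. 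Because any extension $\sigma$ has $\sigma_i$ and $\sigma_j$ in the order prescribed by $\pi$, and because the $k$ hole values of an extension can be any $k$-subset of $[n]$ assigned in any order to the holes, I can choose the hole values so that the restriction of $\sigma$ to this tuple of positions is order-isomorphic to $p$. The assertion $s_n^H(p) \le 1$ then follows from the forward direction alone: two elements of $\sym_n^H$ that both avoid $p$ would both satisfy the condition, which prescribes the relative order of every pair of non-hole entries and hence uniquely determines the permutation of $[n-k]$ formed by the non-hole values.

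The main obstacle will be the counting step in the first paragraph. A naive choice of two non-holes in the occurrence, such as the first and last, fails in general to satisfy $a_r = u_r$ and $a_s = u_s - 1$, because unused holes shift the interval index of every subsequent non-hole. The function $f$ and the observation that it must take the values $1$ and $2$ at consecutive indices is what makes the contradiction work when the occurrence uses more than two non-holes; the remaining arithmetic is routine bookkeeping.
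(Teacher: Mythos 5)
Your proposal is correct, and it diverges from the paper's proof precisely at the one step the paper leaves informal. The paper proves the same two directions: for the ``condition fails implies containment'' direction it gives exactly your construction (the tuple $h_1<\dots<h_{a-1}<i<h_a<\dots<h_{b-1}<j<h_b<\dots<h_k$ of $\ell=k+2$ positions, with $\pi_i$ in slot $a$ and $\pi_j$ in slot $b+1$), and it derives $s_n^H(p)\le 1$ just as you do, from the fact that the condition fixes the relative order of every pair of non-hole entries (note that $i<j$ forces $a\le b<b+1$, so $p_a\ne p_{b+1}$ and the prescribed order is well defined). For the other direction, however, the paper argues from an occurrence of $p$ in $\pi$ and asserts, with only ``it is not hard to see,'' that one may always modify the occurrence into one using all $k$ holes; once all holes are used, the two remaining non-hole indices $i\in I_a$, $j\in I_b$ automatically sit at pattern positions $a$ and $b+1$, violating the condition. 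You replace this unproved modification step with the bookkeeping function $f(r)=r-U'_r$: the count $a_r=u_r-r+U'_r+1$ is correct (among the $u_r-1$ occurrence positions before $i_r$, exactly $u_r-r$ are holes, and adding the $U'_r$ unused holes gives $a_r-1$), and since $f(1)\le 1$, $f(m)\ge 2$ (only $m-2$ holes are unused in total), and $f$ increases by at most $1$ at each step, your maximality argument for $r_0$ is sound --- including the degenerate possibility $f(r_0)\le 0$, which would force $f(r_0+1)\le 1$ and contradict maximality. The resulting pair $i_{r_0},i_{r_0+1}$ with $a_{r_0}=u_{r_0}$ and $a_{r_0+1}=u_{r_0+1}-1$ then contradicts the hypothesis directly, with no need to modify the occurrence. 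What each approach buys: the paper's route is shorter on the page but rests on an unverified claim, while yours is fully rigorous and in fact proves that claim as a byproduct --- taking $i_{r_0}$ and $i_{r_0+1}$ as the two non-holes and rebuilding the occurrence via your explicit tuple construction yields a copy of $p$ that uses all $k$ holes, which is exactly what the paper's ``it is not hard to see'' asserts.
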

\begin{proof} 
  Assume that $i<j$ are distinct indices from the set $I$, with $i\in
  I_a$ and $j\in I_b$. Necessarily, $a\le b$. Note that in $\pi$ there
  are $a-1$ holes to the left of $\pi_i$, there are $b-a$
  holes between $\pi_i$ and $\pi_j$, and there are $k-b+1$
  holes to the right of~$\pi_j$.

  Assume that for some $i\in I_a$ and $j\in I_b$, with $i<j$, the
  symbols $\pi_i$ and $\pi_j$ have the same relative order as $p_a$
  and $p_{b+1}$. Then $\pi$ contains an occurrence of $p$, in which
  $\pi_i$ corresponds to $p_a$, $\pi_j$ corresponds to $p_{b+1}$, and
  the $k$ holes correspond to the remaining $k$ symbols
  of~$p$.

  Conversely, assume that $\pi$ contains an occurrence of $p$. This
  means that there is an $\ell$-tuple $P$ of indices, such that the
  subsequence $(\pi_h;\; h\in P)$ is a copy of~$p$. It is not hard to
  see that in such case we may always find a copy of $p$ that uses all
  the $k$ holes of~$\pi$. In other words, we may assume that $H$ is a
  subset of~$P$. Let $i$ and $j$ be the two indices of $P$ not
  belonging to~$H$, with $i<j$. Fix $a$ and $b$ such that $i\in I_a$
  and~$j\in I_b$. In the $\ell$-tuple $(\pi_h;\; h\in P)$, the element
  $\pi_i$ is the $a$-th element, since it has $a-1$ holes to
  the left of it, while $j$ is $(b+1)$-th element, since it has $b-1$
  holes and the symbol $\pi_i$ to the left of it. Since
  $(\pi_h;\; h\in P)$ is assumed to be a copy of~$p$, we conclude that
  $\pi_i$ and $\pi_j$ have the same relative order as $p_a$
  and~$p_{b+1}$.

  This shows that $\pi$ avoids $p$ if and only if for each two
  distinct indices $i<j$ with $i\in I_a$ and $j\in I_b$, the relative
  order of $\pi_i$ and $\pi_j$ differs from the relative order of
  $p_a$ and~$p_{b+1}$.

  For a fixed $p\in \sym_\ell$, for each $n$ and for each set $H\subseteq
  [n]$ of size $k$, if $\pi$ is a partial permutation from $\sym_n^H(p)$, the
  relative order of every two non-holes of $\pi$ is uniquely
  determined by the relative order of the symbols of~$p$. In
  particular, $\pi$ is uniquely determined by $p$, $n$ and $H$,
  implying that~$s_n^H(p)\le 1$.
\end{proof}

Motivated by Lemma~\ref{lem-bax1}, we introduce the following
notation. Let $p\in \sym_\ell$ be a pattern, let $k=\ell-2$, let $n$ be
an integer, and let $H\subseteq[n]$ be a $k$-element set of
integers. The \emph{order graph $G_n^H(p)$} is a directed graph on the
vertex set $I=[n]\setminus H$, whose edge-set is defined by the
following condition: for every $i<j$, such that $i\in I_a$ and $j\in
I_b$, the graph $G_n^H(p)$ has an edge from $i$ to $j$ if
$p_a>p_{b+1}$, and it has an edge from $j$ to $i$ if $p_a<p_{b+1}$.

Note that $G_n^H(p)$ is a tournament, i.e., for each pair of distinct
vertices $i$ and $j$, the graph $G_n^H(p)$ has an edge from $i$
to $j$ or an edge from $j$ to $i$, but not both.

Let $\pi=\pi_1\dotsb\pi_n$ be a partial permutation from the set $\sym_n^H$.
Using the notion of order graphs, Lemma~\ref{lem-bax1} can be restated
in the following equivalent way: $\pi$ avoids $p$ if and only if, for
each two distinct vertices $i,j$ of $G_n^H(p)$, if the graph
$G_n^H(p)$ has a directed edge from $i$ to $j$ then
$\pi_i<\pi_j$. Notice that in this statement, we no longer need to
assume that $i<j$.

\begin{lemma}\label{lem-graph}
  Let $p\in \sym_\ell$ be a pattern, let $k=\ell-2$, let $n$ be an
  integer, and let $H\subseteq[n]$ be a $k$-element set of
  integers. The following statements are equivalent:
  \begin{enumerate}
  \item $s_n^H(p)=1$.
  \item $G_n^H(p)$ has no directed cycle.
  \item $G_n^H(p)$ has no directed cycle of length 3.
  \end{enumerate}
\end{lemma}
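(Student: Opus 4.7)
The plan is to reduce everything to the classical fact that a tournament is acyclic if and only if it is a linear order, together with the equally classical fact that a tournament contains a directed cycle if and only if it contains a directed 3-cycle.

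First I would establish (1)$\Leftrightarrow$(2). By the reformulation of Lemma~\ref{lem-bax1} stated just before the present lemma, a partial permutation $\pi\in\sym_n^H$ avoids $p$ if and only if the bijection $\pi\colon I\to [n-k]$ (viewed as an integer labeling of the vertices of $G_n^H(p)$) satisfies $\pi_i<\pi_j$ whenever $G_n^H(p)$ has a directed edge from $i$ to $j$. Since $G_n^H(p)$ is a tournament on $n-k$ vertices and $\pi$ must be a bijection onto $[n-k]$, the existence of such a $\pi$ is equivalent to the existence of a topological ordering of $G_n^H(p)$, which in turn is equivalent to $G_n^H(p)$ being acyclic. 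Together with $s_n^H(p)\le 1$ from Lemma~\ref{lem-bax1}, this gives (1)$\Leftrightarrow$(2); note that when a topological ordering exists for a tournament, it is unique, which is consistent with $s_n^H(p)\le 1$.

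The implication (2)$\Rightarrow$(3) is immediate. For the converse (3)$\Rightarrow$(2), I would invoke (or prove in one line) the standard fact that every tournament that contains a directed cycle contains a directed cycle of length exactly three. The standard argument: given a shortest directed cycle $v_1\to v_2\to\dotsb\to v_r\to v_1$ of length $r\ge 4$, look at the edge between $v_1$ and $v_3$ in the tournament; whichever direction it points, it produces a strictly shorter directed cycle (either $v_1\to v_3\to\dotsb\to v_r\to v_1$ or $v_1\to v_2\to v_3\to v_1$), contradicting minimality. Thus any cyclic tournament contains a 3-cycle, so the absence of 3-cycles forces the tournament to be acyclic.

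No step is particularly hard; the main thing to check carefully is that the reformulation of Lemma~\ref{lem-bax1} really does convert the $p$-avoidance condition on partial permutations into a linear-extension problem on the tournament $G_n^H(p)$, and that the unique labeling by $[n-k]$ corresponds exactly to the unique topological ordering of an acyclic tournament.
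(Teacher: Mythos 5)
Your proposal is correct and follows essentially the same route as the paper: the equivalence of (2) and (3) via the tournament property, the implication (1)$\Rightarrow$(2) from the edge-forces-order reformulation of Lemma~\ref{lem-bax1}, and (2)$\Rightarrow$(1) via a topological ordering of $G_n^H(p)$. You merely spell out two points the paper leaves implicit (the shortest-cycle argument for tournaments and the uniqueness of the topological order), which is fine.
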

\begin{proof}
  Since $G_n^H(p)$ is a tournament, the statements 2 and 3 are easily
  seen to be equivalent.
  
  Let us now show that (1) implies (2). Assume that $s_n^H(p)=1$, and
  let $\pi=\pi_1\dotsb \pi_n$ be the partial permutation from $\sym_n^H(p)$. As
  we have pointed out before, if $G_n^H(p)$ has an edge from $i$ to
  $j$, then $\pi_i<\pi_j$. This clearly shows that $G_n^H(p)$ may have
  no directed cycle.

  Conversely, if $G_n^H(p)$ has no directed cycle, we may
  topologically order its vertices, i.e., we can assign to every
  vertex $i$ a value $\pi_i$ in such a way that if the graph has an
  edge from $i$ to $j$, then $\pi_i<\pi_j$. The values $\pi_i$ then
  define a $p$-avoiding partial permutation $\pi\in \sym_n^H(p)$, showing that
  $s_n^H(p)=1$.
\end{proof}

We are now ready to demonstrate the significance of Baxter permutations. 
Note that for any pattern of length $\ell=k+2$, and for any $n$ from the
set $\{k,k+1,k+2\}$, we always have $s_n^k(p)=\binom{n}{k}$. 
Thus, for these small values of $n$, all patterns have the same behavior.
However, for all larger values of $n$, the Baxter patterns are separated from the rest, as
the next proposition shows.

\begin{proposition}\label{pro-baxter} 
  Let $p$ be a permutation pattern of size~$\ell$, and let $k=\ell
  -2$.  The following statements are equivalent.
  \begin{enumerate}
  \item The pattern $p$ is a Baxter permutation.
  \item For each $n\ge k$ and each $k$-element set $H\subseteq [n]$,
    $s_n^H(p)=1$.
  \item For $n=k+3$ and each $k$-element set $H\subseteq [n]$,
    $s_n^H(p)=1$.
  \item There exists $n\ge k+3$ such that for each $k$-element set
    $H\subseteq [n]$, $s_n^H(p)=1$.
  \end{enumerate}
\end{proposition}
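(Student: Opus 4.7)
The plan is to prove the cyclic chain of implications (1)$\Rightarrow$(2)$\Rightarrow$(3)$\Rightarrow$(4)$\Rightarrow$(1), using Lemma~\ref{lem-graph} to translate $s_n^H(p)=1$ into the absence of a directed 3-cycle in the tournament $G_n^H(p)$. The implications (2)$\Rightarrow$(3) and (3)$\Rightarrow$(4) are immediate, since (3) is a specialization of (2) to $n=k+3$, and this same value of $n$ witnesses~(4). The substantive content is a tight correspondence between directed 3-cycles in $G_n^H(p)$ and occurrences of the Babson--Steingr\'\i msson patterns 2-41-3 and 3-14-2 in~$p$.

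For (1)$\Rightarrow$(2), fix three vertices $i_1<i_2<i_3$ of $G_n^H(p)$ lying in intervals $I_{a_1},I_{a_2},I_{a_3}$, so $a_1\le a_2\le a_3$, and suppose they form a directed 3-cycle. The three pairwise edges are governed by the comparisons of $p_{a_1}$ with $p_{a_2+1}$, of $p_{a_2}$ with $p_{a_3+1}$, and of $p_{a_1}$ with $p_{a_3+1}$. A case split on the two possible cyclic orientations yields, respectively, the chains of strict inequalities $p_{a_2+1}<p_{a_1}<p_{a_3+1}<p_{a_2}$ and $p_{a_2}<p_{a_3+1}<p_{a_1}<p_{a_2+1}$. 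Strictness rules out any coincidence $a_i=a_j$, so $a_1<a_2<a_3$, and then the four positions $a_1<a_2<a_2+1\le a_3<a_3+1$ are all distinct with the middle pair $a_2,a_2+1$ consecutive. Reading off the order pattern shows that the first case is a 2-41-3 occurrence and the second a 3-14-2 occurrence in~$p$, contradicting the Baxter hypothesis; hence $G_n^H(p)$ is acyclic and Lemma~\ref{lem-graph} gives $s_n^H(p)=1$.

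For (4)$\Rightarrow$(1), I would argue the contrapositive. If $p$ contains, say, a 2-41-3 pattern at positions $a<b<b+1<d$, then for any $n\ge k+3$ I build an $H\subseteq[n]$ of size $k$ with $I_a$, $I_b$, and $I_{d-1}$ all non-empty, for instance by placing one non-hole in $I_b$, one in $I_{d-1}$, and the remaining $n-k-2\ge 1$ non-holes in $I_a$, which determines the positions of the $k$ holes uniquely. Picking one vertex from each of these three intervals and running the computation of the previous paragraph in reverse shows that they form a directed 3-cycle in $G_n^H(p)$, so $s_n^H(p)=0$ by Lemma~\ref{lem-graph}; this contradicts~(4). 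The 3-14-2 case is symmetric. The only delicate point throughout is the index shift $p_{a_j}$ versus $p_{a_{j+1}+1}$ built into the definition of $G_n^H(p)$: because two consecutive non-hole vertices in different intervals are separated by at least one hole, this shift is exactly what forces the middle pair of a forbidden pattern to be adjacent in $p$, so that the Babson--Steingr\'\i msson characterization of Baxter permutations matches the 3-cycle structure of $G_n^H(p)$ precisely.
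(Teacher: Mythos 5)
Your proposal is correct and follows essentially the same route as the paper's own proof: both reduce $s_n^H(p)=1$ to the absence of directed 3-cycles in the tournament $G_n^H(p)$ via Lemma~\ref{lem-graph}, extract the chains $p_{a_2+1}<p_{a_1}<p_{a_3+1}<p_{a_2}$ (resp.\ its mirror) from the two cycle orientations to produce a 2-41-3 (resp.\ 3-14-2) occurrence, and prove (4)$\Rightarrow$(1) by the contrapositive, choosing $H$ so that the three relevant intervals are nonempty. Your only additions---spelling out that strictness of the inequalities forces $a_1<a_2<a_3$, and the explicit placement of non-holes realizing such an $H$---are minor elaborations of steps the paper leaves implicit.
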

\begin{proof}
  Let us first prove that (1) implies (2). Assume that $p$ is a Baxter
  permutation. Choose $n$ and $H$ as in (2). By Lemma~\ref{lem-graph},
  to show that $s_n^H(p)=1$, it is enough to prove that the order
  graph $G_n^H(p)$ has no directed triangles. For contradiction,
  assume that the order graph contains a triangle induced by three
  vertices $h<i<j$.

  Assume that $G_n^H(p)$ contains the edges from $h$ to $i$, from $i$
  to $j$ and from $j$ to $h$ (if the triangle is oriented in the other
  direction, the argument is analogous). Fix $a$, $b$ and $c$, such
  that $h\in I_a$, $i\in I_b$ and $j\in I_c$. Necessarily, $1\le a\le
  b\le c\le k+1=\ell-1$. Note that the three edges $(hi)$, $(ij)$, and
  $(jh)$ imply, respectively, the three inequalities $p_a>p_{b+1}$,
  $p_b>p_{c+1}$, and $p_a<p_{c+1}$. In other words,
  $p_{b+1}<p_a<p_{c+1}<p_b$. This shows that the four indices
  $a,b,b+1,c+1$ are all distinct, and they induce in $p$ a pattern
  order-isomorphic to 2413, contradicting the assumption that $p$ is a
  Baxter permutation. We conclude that for a Baxter permutation $p$,
  the graph $G_n^H(p)$ has no directed triangle, and hence
  $s_n^H(p)=1$.

  Clearly, (2) implies (3) and (3) implies (4). To complete the proof
  of the proposition, we will show that (4) implies (1). Assume that
  $p$ is not a Baxter permutation, and that it contains a copy of 2413
  induced by the indices $a<b<b+1<c+1$ (the case when $p$ contains
  3142 is analogous). In other words, $p$ satisfies
  $p_{b+1}<p_a<p_{c+1}<p_b$. Let $n\ge k+3$ be given. Select a
  $k$-element set $H\subseteq [n]$ in such a way that the three sets
  $I_a$, $I_b$ and $I_c$ are all nonempty. Choose $h\in I_a$, $i\in
  I_b$ and $j\in I_c$ arbitrarily. Necessarily, we have $h<i<j$, and
  the graph $G_n^H(p)$ contains the three directed edges $hi$, $ij$
  and $jh$. This means that $G_n^H(p)$ has a triangle, and hence
  $s_n^H(p)=0$.
\end{proof}

The following result is a direct consequence of
Proposition~\ref{pro-baxter}.

\begin{theorem}\label{thm-baxter}
  Let $p\in \sym_\ell$ be a permutation pattern. Let $k=\ell-2$. If
  $p$ is a Baxter permutation then $s_n^k(p)=\binom{n}{k}$ for each
  $n\ge k$. If $p$ is not a Baxter permutation, then
  $s_n^k(p)<\binom{n}{k}$ for each $n\ge k+3$.  Moreover, all the
  Baxter permutations are strongly $k$-Wilf equivalent.
\end{theorem}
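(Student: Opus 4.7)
The plan is to deduce Theorem~\ref{thm-baxter} directly from Proposition~\ref{pro-baxter}, exploiting the decomposition
\[
s_n^k(p)=\sum_{\substack{H\subseteq [n]\\ |H|=k}} s_n^H(p),
\]
which is valid because $\sym_n^k$ is partitioned by the sets $\sym_n^H$ as $H$ ranges over $k$-element subsets of $[n]$, and the key bound $s_n^H(p)\le 1$ supplied by Lemma~\ref{lem-bax1}.

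For the first claim, I would fix a Baxter pattern $p$ and an integer $n\ge k$. Proposition~\ref{pro-baxter}, implication $(1)\Rightarrow(2)$, gives $s_n^H(p)=1$ for every $k$-element $H\subseteq [n]$. Summing the above display over the $\binom{n}{k}$ choices of $H$ yields $s_n^k(p)=\binom{n}{k}$, as required.

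For the second claim, let $p$ fail to be Baxter and fix any $n\ge k+3$. Proposition~\ref{pro-baxter}, implication $\lnot(1)\Rightarrow\lnot(4)$, together with the universal bound $s_n^H(p)\le 1$, guarantees at least one $k$-element set $H^\ast\subseteq [n]$ with $s_n^{H^\ast}(p)=0$. Combined with $s_n^H(p)\le 1$ for the remaining $H$, summation yields $s_n^k(p)<\binom{n}{k}$.

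Finally, for the strong $k$-Wilf equivalence of Baxter patterns, I would simply observe that for any two Baxter permutations $p,q\in\sym_\ell$ and any $n$ and $k$-element $H\subseteq [n]$, the equivalence $(1)\Leftrightarrow(2)$ from Proposition~\ref{pro-baxter} gives $s_n^H(p)=1=s_n^H(q)$ whenever $n\ge k$; and when $n<k$ there are no $k$-element subsets of $[n]$, so the equality $s_n^H(p)=s_n^H(q)$ holds vacuously. No step is genuinely hard — all the substantive work has already been carried out in Lemma~\ref{lem-bax1}, Lemma~\ref{lem-graph}, and Proposition~\ref{pro-baxter}; the theorem is essentially a bookkeeping corollary, and the only subtlety is remembering that a partial permutation is determined by its non-hole values so $s_n^H(p)\in\{0,1\}$.
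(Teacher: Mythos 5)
Your proposal is correct and takes essentially the same route as the paper: the paper states Theorem~\ref{thm-baxter} as ``a direct consequence of Proposition~\ref{pro-baxter},'' and the derivation it leaves implicit is precisely the bookkeeping you spell out --- partitioning $\sym_n^k$ into the sets $\sym_n^H$, invoking the bound $s_n^H(p)\le 1$ from Lemma~\ref{lem-bax1}, and applying the implications $(1)\Rightarrow(2)$ and $\lnot(1)\Rightarrow\lnot(4)$ of the proposition. Your handling of the edge cases (the vacuous case $n<k$, and the use of $s_n^H(p)\in\{0,1\}$ to upgrade $s_n^H(p)\ne 1$ to $s_n^H(p)=0$) is sound, so there is no gap.
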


We remark that by a slightly more careful analysis of the proof of
Proposition~\ref{pro-baxter}, we could give a stronger upper bound for
$s_n^k(p)$ when $p$ is not a Baxter permutation. In particular, it is
not hard to see that in such case, $s_n^k(p)$ is eventually a
polynomial in $n$ of degree at most $k-1$, with coefficients depending
on~$k$.

\section{Short patterns}\label{sec:length4}


In the rest of this paper, we focus on explicit formulas for $s_n^k(p)$, 
where $p$ is a pattern of length~$\ell$. We may assume that $k<\ell-1$, and 
$\ell>2$, since for any other values of $(k,\ell)$ the enumeration is trivial 
(see Observation~\ref{obs-l}). We also restrict ourselves to $k\ge 1$, since 
the case $k=0$, which corresponds to classical pattern-avoidance in 
permutations, has already been extensively studied~\cite{bona}.

For a pattern $p$ of length three, the situation is very simple.
Theorem~\ref{thm-baxter} implies that $s_n^1(p)=n$, since all
permutations of length three are Baxter permutations.

Let us now deal with patterns of length four. In Figure~\ref{fig:w4},
we depict the $k$-Wilf equivalence classes, where the four rows, top
to bottom, correspond to the four values $k=0,1,2,3$. Since all the
$k$-Wilf equivalences are closed under complements and reversals (but
not inversions), we represent the 24 patterns of length four by eight
representatives, one from each symmetry class. For instance,
$\{1342,1423\}$ in the second row represents the union of
$\{1342,2431,3124,4213\}$ and $\{1423,2314,3241,4132\}$.

\begin{figure}[htb]
\begin{center}
  \begin{tikzpicture}[join=bevel,
      xscale=0.5, yscale=0.4, font=\small, inner sep=1pt]
    \input{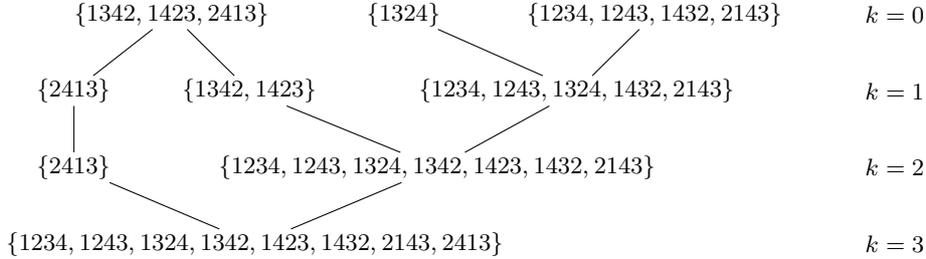}
  \end{tikzpicture}
\end{center}
\caption{The $k$-Wilf-equivalence classes of permutations of size 4.\label{fig:w4}}
\end{figure}

All patterns $p$ of length four except 2413 and 3142 are Baxter permutations, 
and hence they satisfy $s_n^2(p)=\binom{n}{2}$ by Theorem~\ref{thm-baxter}. 

Let us now compute $s_n^2(p)$ for a pattern $p\in\{2413, 3142\}$ and an 
integer~$n$. Since 3142 is the complement of 2413, we know that 
$s_n^2(2413)=s_n^2(2413)$. Let $i$ and $j$ be two indices, with $1\le i<j\le 
n$, and let $H=\{i,j\}$. Let us determine the value of $s_n^H(2413)$. Define 
$I_1$, $I_2$ and $I_3$ as in the previous section, i.e., 
$I_1=\{1,2,\dotsc,i-1\}$, $I_2=\{i+1,\dotsc,j-1\}$ and 
$I_3=\{j+1,\dotsc,n\}$. 

Using the same argument as in the proof of Proposition~\ref{pro-baxter}, we 
deduce that if all the three sets $I_1$, $I_2$, and $I_3$ are nonempty, then 
$s_n^H(2413)=0$. On the other hand, if at least one of the three sets is 
empty, then it is easy to see that the graph $G_n^H(2413)$ is acyclic, and 
hence $s_n^H(2413)=1$ by Lemma~\ref{lem-graph}.

For $n\ge 3$, there are $3n-6$ possibilities to choose $H$ in such a way that 
at least one of the sets $I_1,I_2$ and $I_3$ is empty. We conclude that 
$s_n^2(2413)=s_n^2(3142)=3n-6$.

In the rest of this section, we deal with 1-Wilf equivalence of patterns
of length four, and with the enumeration of the corresponding
avoidance classes.  Theorem~\ref{thm-diag} and symmetry arguments
imply that all the
patterns $1234$, $1243$, $1432$ and $2143$ are strongly
$\star$-Wilf-equivalent, and Theorem~\ref{thm-312} with appropriate
symmetry arguments shows that $1342$
and $1423$ are strongly $\star$-Wilf-equivalent as well. The only case
not covered by these general theorems is the 1-Wilf equivalence of
$1324$ and $1234$, which is handled separately by the next
proposition.

\begin{proposition}\label{pro-1324}
  The patterns 1234 and 1324 are strongly 1-Wilf-equivalent.
\end{proposition}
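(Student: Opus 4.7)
The plan is to reduce the identity $s_n^{\{h\}}(1234)=s_n^{\{h\}}(1324)$ to two familiar Wilf-type bijections on ordinary permutations by ``peeling off'' the hole. Let $\sigma\in\sym_{n-1}$ be the permutation obtained from $\pi$ by deleting the hole and standardizing the remaining entries, and split $\sigma=\sigma_L\sigma_R$ at the hole. A case analysis on which of the four ranks of a putative $1234$- or $1324$-occurrence the hole would fill shows that: $\pi$ avoids $1234$ iff $\sigma$ avoids $123$ (each single-rank deletion of $1234$ still gives a $123$, so any $123$ in $\sigma$ can be completed through the hole), and hence $s_n^{\{h\}}(1234)=C_{n-1}$; while $\pi$ avoids $1324$ iff $\sigma_L$ avoids $132$ (rank-$4$ deletion), $\sigma_R$ avoids $213$ (rank-$1$ deletion), and no $123$-pattern in $\sigma$ straddles the hole with either one entry on the left and two on the right (rank-$3$ deletion) or two on the left and one on the right (rank-$2$ deletion).

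Fixing the value set $V_L$ of $\sigma_L$ and putting $V_R:=[n-1]\setminus V_L$, I would next show that the two split-$123$ conditions are equivalent to two purely local conditions: the elements of $V_L^-:=V_L\cap[1,\max V_R)$ must appear in $\sigma_L$ in decreasing order, and the elements of $V_R^+:=V_R\cap(\min V_L,\,n-1]$ must appear in $\sigma_R$ in decreasing order. The count of $1324$-avoiders (resp.\ $1234$-avoiders) with prescribed $V_L$ therefore factors as a product of a ``left'' count over permutations of $V_L$ and a ``right'' count over permutations of $V_R$, and the proposition reduces to two equalities: (i)~the number of $123$-avoiders of $V_L$ with $V_L^-$ decreasing equals the number of $132$-avoiders of $V_L$ with $V_L^-$ decreasing; and (ii)~the analogous identity with $V_R$ in place of $V_L$, $213$ in place of $132$, and $V_R^+$ in place of $V_L^-$.

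For~(i), every element of $V_L^-$ is strictly smaller than every element of $V_L\setminus V_L^-$ and appears in decreasing order, so it is automatically a left-to-right minimum in any admissible $\sigma_L$; the classical Simion--Schmidt bijection between $123$- and $132$-avoiders of any totally ordered set preserves both the positions and the values of the left-to-right minima, and so restricts to the desired bijection. For~(ii), the elements of $V_R^+$ are strictly larger than those of $V_R\setminus V_R^+$ and appear in decreasing order, hence are the right-to-left maxima of $\sigma_R$ in both the $123$- and the $213$-avoiding setting; the required bijection is obtained by conjugating the Simion--Schmidt bijection by the reverse-complement involution on the standardization of permutations of $V_R$, which fixes the $123$-avoiding class, exchanges $213$-avoiders with $132$-avoiders, and trades left-to-right minima for right-to-left maxima.

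The step I expect to be the main technical obstacle is the bookkeeping that translates the split-$123$ conditions into the two local decreasing-subsequence conditions on $\sigma_L$ and $\sigma_R$; once this decoupling is verified, the bijections~(i) and~(ii) are applied independently on the two sides of the hole to produce the required bijection between $\sym_n^{\{h\}}(1324)$ and $\sym_n^{\{h\}}(1234)$.
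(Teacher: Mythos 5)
Your proposal is correct and follows essentially the same route as the paper: your hole-rank case analysis recovers exactly the paper's Observations~\ref{obs-1234} and~\ref{obs-1324}, and your bijections coincide with the paper's use of the Simion--Schmidt bijection on the left part (preserving left-to-right minima in value and position) and its reverse-complement conjugate on the right part (preserving right-to-left maxima). Your explicit verification that the straddling $123$-conditions decouple into the two local decreasing conditions, and that these are equivalent to the relevant entries being left-to-right minima (resp.\ right-to-left maxima), fills in details the paper leaves implicit but does not change the argument.
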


Let $\pi = \pi_1 \pi_2\dotsb \pi_{j-1}\di\pi_{j+1}\dotsb \pi_n$ be a
partial permutation of length $n$ with a single hole,
appearing at position~$j$. The sequence $\pi_1 \pi_2\dotsb \pi_{j-1}$
will be referred to as the \emph{left part} of $\pi$ and
$\pi_{j+1}\dotsb \pi_n$ will be the \emph{right part} of $\pi$. The
smallest element appearing in the left part of $\pi$ will be called
the \emph{left minimum} of~$\pi$. Left maximum, right minimum and
right maximum are defined analogously.

The following two observations, which follow directly from the
definitions, characterize the avoidance of 1234 and 1324 in partial
permutations with a single hole.

\begin{observation}\label{obs-1234}
  A partial permutation $\pi= \pi_1 \pi_2\dotsb
  \pi_{j-1}\di\pi_{j+1}\dotsb \pi_n$ avoids the pattern 1234 if and
  only if it satisfies the following conditions:
  \begin{enumerate}
  \item The left part of $\pi$ avoids 123.
  \item The elements of the left part that are smaller than the right
    maximum form a decreasing sequence.
  \item The right part of $\pi$ avoids 123.
  \item The elements of the right part that are larger than the left
    minimum form a decreasing sequence.
\end{enumerate}
\end{observation}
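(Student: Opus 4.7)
The plan is to prove the biconditional by a case analysis on how the hole position~$j$ interacts with a potential occurrence of $1234$ in an extension of~$\pi$. The key observation underlying both directions is that every extension $\sigma$ of $\pi$ is obtained by choosing a rank $v\in\{1,\dots,n\}$ for the hole and shifting the non-hole values at or above $v$ up by one; consequently the relative order among non-hole entries is preserved in $\sigma$, while the hole itself can realize any rank in~$\{1,\dots,n\}$ and, in particular, any rank strictly between two specified non-hole values.

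For the necessity direction I would argue the contrapositive, condition by condition. If~(1) fails then the left part contains $123$, and filling the hole with any rank above all three entries produces $1234$. If~(2) fails there is an ascent $\pi_a<\pi_b$ in the left part with both values strictly below some right entry $\pi_c$; inserting the hole with rank strictly between $\pi_b$ and $\pi_c$ creates a $1234$ at the positions $a<b<j<c$. Conditions~(3) and~(4) are symmetric, with a failure of~(4) exhibiting an ascent $\pi_c<\pi_d$ in the right part whose entries both exceed some left entry $\pi_a$, and a hole of rank strictly between $\pi_a$ and $\pi_c$.

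For sufficiency I would fix an arbitrary extension $\sigma$ and suppose for contradiction that $\sigma$ contains a $1234$ at positions $p_1<p_2<p_3<p_4$, then split according to whether $j\in\{p_1,p_2,p_3,p_4\}$ and, if so, which index. If $j\notin\{p_1,\dots,p_4\}$, the occurrence lies entirely among non-hole entries and so is present in $\pi$ as well; classifying by the number of $p_i$'s left of~$j$, the splits $(4,0)$ and $(3,1)$ give a $123$ in the left part, violating~(1); the splits $(0,4)$ and $(1,3)$ give a $123$ in the right part, violating~(3); and the split $(2,2)$ produces an ascent in the left part whose entries both lie below the right maximum, violating~(2). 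If $j=p_1$ or $j=p_4$, the remaining three positions lie entirely in the right or left part and form a $123$ there, contradicting~(3) or~(1). If $j=p_2$ then $p_3<p_4$ lie in the right part, the preserved relative order gives $\pi_{p_3}<\pi_{p_4}$, and both entries exceed $\pi_{p_1}$ and hence the left minimum, contradicting~(4); the case $j=p_3$ is symmetric and contradicts~(2).

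The argument is essentially bookkeeping, and I do not expect a genuine obstacle. The one subtlety worth checking is the claim made in the necessity direction that the required extension actually exists, i.e., that between any two distinct non-hole values a rank is always available for the hole; this is immediate from the definition of extension, since choosing $v\in\{1,\dots,n\}$ together with the subsequent shift realizes every rank in $\{1,\dots,n\}$ as a possible value of the hole in $\sigma$.
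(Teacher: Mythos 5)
Your proof is correct: the case analysis is exhaustive (the key hole-placement claim, that an extension can realize any rank strictly between two non-hole values, is verified correctly, and the six-way split in the sufficiency direction covers all positions of $j$ relative to a putative occurrence of $1234$). The paper itself offers no proof, stating that the observation ``follows directly from the definitions,'' and your argument is precisely the routine verification the authors had in mind, so there is nothing to compare beyond noting that you have supplied the omitted details faithfully.
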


\begin{observation}\label{obs-1324}
  A partial permutation $\pi= \pi_1 \pi_2\dotsb
  \pi_{j-1}\di\pi_{j+1}\dotsb \pi_n$ avoids the pattern 1324 if and
  only if it satisfies the following conditions:
  \begin{enumerate}
  \item The left part of $\pi$ avoids 132.
  \item The elements of the left part that are smaller than the right
    maximum form a decreasing sequence.
  \item The right part of $\pi$ avoids 213.
  \item The elements of the right part that are larger than the left
    minimum form a decreasing sequence.
\end{enumerate}
\end{observation}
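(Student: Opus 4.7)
The plan is a case analysis linking each of the four listed conditions to one of the four possible roles the hole can play inside a hypothetical occurrence of $1324$. Recall that an extension of $\pi$ is obtained by choosing a rank for the hole value among the $n$ values of the extended permutation, so the hole may be made smaller than every non-hole value, larger than every non-hole value, or strictly between any two prescribed non-hole values.

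For the forward direction I would prove the contrapositive: if some condition fails, I exhibit an extension containing $1324$. If condition~(1) fails, a $132$ in the left part together with a hole value larger than its maximum gives a $1324$ with the hole in the role of the final ``$4$''. If condition~(2) fails, an increasing pair in the left part whose larger entry lies below the right maximum, combined with a right-part witness exceeding that larger entry and a hole value squeezed strictly between the two left entries, gives a $1324$ with the hole in the role of ``$2$''. If condition~(3) fails, a $213$ in the right part preceded by a hole value smaller than everything gives a $1324$ with the hole in the role of ``$1$''. Finally, if condition~(4) fails, an increasing pair in the right part both of whose entries exceed the left minimum, together with the left minimum and a hole value strictly between the two right entries, gives a $1324$ with the hole in the role of ``$3$''.

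For the converse, assume all four conditions hold and suppose for contradiction that an extension $\sigma$ contains a $1324$ at positions $p_1<p_2<p_3<p_4$. If one of these positions is the hole position $j$, the analysis of the forward direction identifies which of the four conditions is violated. If none of them is $j$, I split the four positions according to how many lie in the left part. The sub-cases $(4,0)$ and $(3,1)$ contain a $132$ in the left part, violating condition~(1); the sub-cases $(0,4)$ and $(1,3)$ contain a $213$ in the right part, violating condition~(3); and the remaining sub-case $(2,2)$ yields an increasing pair in the left part whose larger entry is bounded above by an entry in the right part, violating condition~(2) (and, symmetrically, an increasing pair in the right part both of whose entries exceed the left minimum, violating condition~(4)).

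The main obstacle is keeping the bookkeeping correct in the forward direction, in particular verifying that in each sub-case the requested rank for the hole value is available. This reduces to checking that the chosen witnesses in the left and right parts leave a gap between them for the hole value, which is straightforward but needs to be tracked separately in each of the four cases. Otherwise, the whole argument is a mechanical enumeration of how a $1324$ can arise, entirely parallel to the justification of Observation~\ref{obs-1234}.
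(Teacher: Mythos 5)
Your proof is correct, and it is essentially the argument the paper intends: the authors give no proof at all, stating only that Observations~\ref{obs-1234} and~\ref{obs-1324} ``follow directly from the definitions'', and your case analysis---matching each of the four conditions to the role (``1'', ``2'', ``3'', or ``4'') that the hole can play in an occurrence of $1324$, plus the prefix/suffix split over the left and right parts when the occurrence misses the hole---is precisely that routine verification. The role assignments, the forced position splits in the converse (left-part positions necessarily precede $j$, which precedes right-part positions), and the availability of an intermediate rank for the hole between any two distinct non-hole values all check out.
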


\setlength{\unitlength}{3mm}
\newcommand{\dotdiam}{0.5}
\begin{figure}
  \begin{center}
    \newcommand{\onepart}{%
      \put(1,6){\line(1,0){5}}
      \put(1,6){\line(0,1){6}}
      \put(1,12){\line(1,0){5}}
      \put(6,6){\line(0,1){6}}
      \multiput(1,10)(1,0){4}{\line(1,1){2}}
      \put(1,11){\line(1,1){1}}
      \put(5,10){\line(1,1){1}}
      \put(2,9){\circle*{\dotdiam}}
      \put(3,8){\circle*{\dotdiam}}
      \put(5,6){\circle*{\dotdiam}}
      \multiput(3.8,7.2)(0.2,-0.2){3}{\line(1,-1){0.1}}
      \put(6.75,7.5){\di}
      \multiput(6,6)(0.6,0){14}{\line(1,0){0.3}}
      \multiput(0,10)(0.6,0){14}{\line(-1,0){0.3}}
      \put(8,4){\line(1,0){5}}
      \put(8,4){\line(0,1){6}}
      \put(8,10){\line(1,0){5}}
      \put(13,4){\line(0,1){6}}
      \multiput(8,4)(1,0){4}{\line(1,1){2}}
      \put(8,5){\line(1,1){1}}
      \put(12,4){\line(1,1){1}}
      \put(9,10){\circle*{\dotdiam}}
      \put(10,8.5){\circle*{\dotdiam}}
      \put(12,6.5){\circle*{\dotdiam}}
      \multiput(10.8,7.7)(0.2,-0.2){3}{\line(1,-1){0.1}}
      \put(1.5,14){\line(1,0){5}}
      \qbezier(1, 13.5)(1, 14)(1.5, 14)
      \qbezier(6.5,14)(7,14)(7,14.5)
      \qbezier(7.5,14)(7,14)(7,14.5)
      \put(7.5,14){\line(1,0){5}}
      \qbezier(12.5,14)(13,14)(13,13.5)
      \put(1.5,2){\line(1,0){1.5}}
      \qbezier(1,2.5)(1,2)(1.5,2)
      \qbezier(3,2)(3.5,2)(3.5,1.5)
      \qbezier(3.5,1.5)(3.5,2)(4,2)
      \put(4,2){\line(1,0){1.5}}
      \qbezier(5.5,2)(6,2)(6,2.5)
      \put(8.5,2){\line(1,0){1.5}}
      \qbezier(8,2.5)(8,2)(8.5,2)
      \qbezier(10,2)(10.5,2)(10.5,1.5)
      \qbezier(10.5,1.5)(10.5,2)(11,2)
      \put(11,2){\line(1,0){1.5}}
      \qbezier(12.5,2)(13,2)(13,2.5)}
    \begin{picture}(14, 16)
      \onepart
      \put(4,15){{\small{avoids $1234$}}}
      \put(1,0){{\small{avoids $123$}}}
      \put(8,0){{\small{avoids $123$}}}
    \end{picture}
    \hskip1cm
    \begin{picture}(14,16)
      \onepart
      \put(4,15){{\small{avoids $1324$}}}
      \put(1,0){{\small{avoids $132$}}}
      \put(8,0){{\small{avoids $213$}}}
    \end{picture}
    \caption{The structures of 1234- and 1324-avoiding partial permutations
      with one hole.\label{figure:avoids1234}}
  \end{center}
\end{figure}
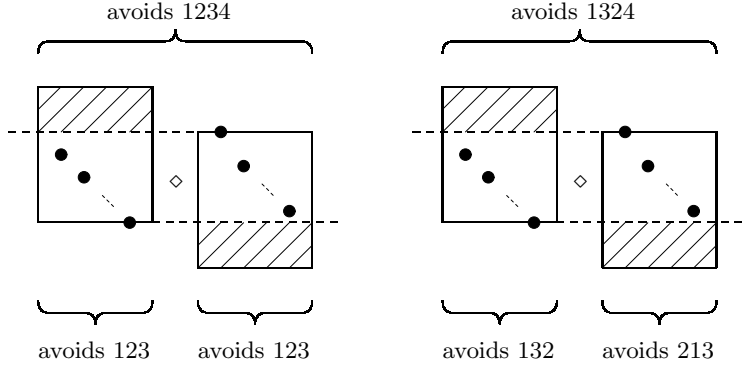

\begin{proof}[Proof of Proposition~\ref{pro-1324}]
  Let us describe a bijection between the sets $\sym_n^1(1234)$ and
  $\sym_n^1(1324)$.  Choose an arbitrary $\pi\in \sym_n^1(1234)$. In
  the first step, we permute the symbols of the left part of $\pi$, so
  that the left part is bijectively transformed from a sequence
  satisfying conditions 1 and 2 of Observation~\ref{obs-1234} to a
  sequence satisfying conditions 1 and 2 of
  Observation~\ref{obs-1324}, while preserving the number of elements
  in the left part that are smaller than the right maximum. Actually,
  we can require a stronger statement, when under the transformation,
  in the left part of $\pi$ the sequence of left-to-right minima will
  be preserved in value and in position, which can be done using the
  Simion-Schmidt bijection; see \cite{SS85} and~\cite[Theorem 1]{CK}.

  In the second step of the bijection, we perform an analogous
  transformation of the right part of the sequence, again using the
  Simion-Schmidt bijection. Indeed, we can achieve a bijective
  transformation between 123- and 213-avoiding permutations
  (corresponding to the right parts) preserving the sequence of
  right-to-left maxima in value and in place: applying reverse and
  complement, it is equivalent to preserving the sequence of
  left-to-right minima in value and in place in a bijection between
  123- and 132-avoiding permutations, which we have by the
  Simion-Schmidt bijection.
\end{proof}

\subsection{Enumeration}\label{sec:enum}

We now focus on explicit enumerations of $s_n^k(p)$ for $p\in\sym_4$ and 
$k=1,2$. In what follows, for two sequences of numbers $\pi_1$ and $\pi_2$
we write $\pi_1<\pi_2$ if each letter of $\pi_1$ is smaller than any
letter of $\pi_2$. Let $C_n$ denote the $n$-th Catalan number $\frac{1}{n+1}\binom{2n}{n}$
and let $C(x)$ be the generating function 
\[
C(x)=\sum_{n\ge 0} C_n x^n=\frac{1-\sqrt{1-4x}}{2x}.
\]

\begin{theorem}\label{enum1} 
For $n\ge 1$, we have $s_n^1(1234)=\binom{2n-2}{n-1}$. 
\end{theorem}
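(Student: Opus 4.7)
My plan is to prove the theorem by exhibiting, for each hole position $j\in[n]$, a bijection
\[
\phi_j : \sym_n^{\{j\}}(1234) \longrightarrow \sym_{n-1}(123),
\]
which immediately gives $s_n^{\{j\}}(1234) = C_{n-1}$, and then summing over $j$ yields
\[
s_n^1(1234) = \sum_{j=1}^n C_{n-1} = n\,C_{n-1} = \binom{2n-2}{n-1}.
\]

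The map itself is the obvious one: given $\pi = L\,\di\,R\in \sym_n^{\{j\}}(1234)$, where $L=\pi_1\dotsb\pi_{j-1}$ and $R=\pi_{j+1}\dotsb\pi_n$ together use each symbol of $[n-1]$ exactly once, simply delete the hole and set $\phi_j(\pi) = LR$. Injectivity is trivial, since the hole position $j$ determines how to split the image, so the work lies in proving that $\phi_j$ is well-defined (the image avoids $123$) and surjective onto $\sym_{n-1}(123)$.

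For well-definedness, I would invoke Observation~\ref{obs-1234} and argue by cases on how a putative $123$-pattern $x<y<z$ could be distributed between $L$ and $R$. The cases ``all three in $L$'' and ``all three in $R$'' are killed by conditions (1) and (3), respectively. If $x,y\in L$ and $z\in R$, then $y < z \le \max R$, so $x$ and $y$ are both elements of $L$ smaller than $\max R$, contradicting condition (2) since they appear in increasing order. The symmetric case $x\in L$, $y,z\in R$ is excluded by condition (4), using $y>x\ge \min L$. For surjectivity, given $\sigma\in \sym_{n-1}(123)$ I split it as $L=\sigma_1\dotsb\sigma_{j-1}$, $R=\sigma_j\dotsb\sigma_{n-1}$, and check the four conditions: (1) and (3) are inherited from $\sigma$, while (2) and (4) follow because a violation would give a $123$-pattern in $\sigma$ — specifically, $(x,y,\max R)$ for (2) and $(\min L,y,z)$ for (4). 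Corner cases $j=1$ or $j=n$ need no separate treatment, since with an empty $L$ or $R$ the relevant conditions are vacuous and $\phi_j$ is plainly a bijection onto $\sym_{n-1}(123)$.

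No step here looks like a genuine obstacle, the whole argument being a short case analysis driven by Observation~\ref{obs-1234}; if anything, the only point requiring a moment of care is the identification of the ``split'' $123$-patterns with the decreasing-sequence conditions (2) and (4), which is exactly where the hypothesis that \emph{every} extension of $\pi$ avoids $1234$ is encoded. Finally, the algebraic identity $n\,C_{n-1} = \binom{2n-2}{n-1}$ is immediate from $C_{n-1} = \tfrac{1}{n}\binom{2n-2}{n-1}$, completing the proof.
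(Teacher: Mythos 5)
Your proposal is correct and is essentially the paper's own proof: the paper derives $s_n^1(1234)=n\,s_{n-1}^0(123)=n\,C_{n-1}=\binom{2n-2}{n-1}$ from formula~\eqref{eq:monotone}, whose content is exactly your hole-deletion bijection (avoidance of $1234$ by $\pi$ is equivalent to avoidance of $123$ by the hole-deleted permutation, with $n$ free choices for the hole position). Your detour through Observation~\ref{obs-1234} just re-verifies, in the case $k=1$, $\ell=4$, the equivalence the paper states directly when establishing~\eqref{eq:monotone}, so the two arguments coincide in substance.
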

\begin{proof}
  From formula~\eqref{eq:monotone} in Subsection~\ref{ssec:mono},
  we get $s_n^1(1234)=n s_{n-1}^0(123)$, and it is well-known (e.g., see~\cite{CK}) that 
  $s_{n-1}^0(123)= C_{n-1}$. This completes the proof. 
\end{proof}
In Section~\ref{sec:bij}, we also provide a bijective proof of Theorem~\ref{enum1},
by mapping $\sym_n^1(1234)$ to lattice
  paths from (0,0) to $(2n-2,0)$ with steps (1,1) and (1,-1).

\begin{theorem}\label{enum2}
  For $n\geq 1$, we have $s_n^1(1342)=\binom{2n-2}{n-1}-\binom{2n-2}{
    n-5}$.
\end{theorem}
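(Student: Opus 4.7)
The plan has two phases: a structural characterization of $\sym_n^1(1342)$ analogous to Observations~\ref{obs-1234} and~\ref{obs-1324}, followed by an enumeration that matches the reflection-principle form of the target answer.

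For the characterization, I write $\pi=\alpha\di\beta$ with $\alpha=\pi_1\cdots\pi_{j-1}$ and $\beta=\pi_{j+1}\cdots\pi_n$. An extension of $\pi$ is obtained by choosing a value $v\in\{1,\dotsc,n\}$ for the hole and bumping the $\pi_i\ge v$ up by one. The condition that $\pi$ avoids $1342$ is equivalent to the statement that, for each of the four possible roles the hole could play in a $1342$-occurrence of some extension, no witness exists. Working through each role yields four conditions:
\begin{itemize}
\item $\alpha$ avoids $123$ (hole cannot play the role of the ``$2$''),
\item $\beta$ avoids $342$ (hole cannot play the role of the ``$1$''),
\item there are no $a<b<j$ and $d>j$ with $\pi_a<\pi_d<\pi_b$ (role of ``$4$''),
\item there are no $a<j<c<d$ with $\pi_a<\pi_d<\pi_c$ (role of ``$3$'').
\end{itemize}
A short case-split over how the four positions of a hypothetical $1342$-occurrence of the non-hole part distribute between $\alpha$ and $\beta$ shows that these conditions also preclude any occurrence that ignores the hole, so they are both necessary and sufficient.

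For the enumeration, I observe that the closed form $\binom{2n-2}{n-1}-\binom{2n-2}{n-5}$ equals, by the reflection principle, the number of $\pm 1$-lattice paths from $(0,0)$ to $(2n-2,0)$ that never reach height~$4$. My plan is to construct an explicit bijection between $\sym_n^1(1342)$ and this family of paths. For each rank $k\in\{1,\dotsc,n-1\}$ I would emit two path steps whose shape is determined by (i) whether $k$ appears in $\alpha$ or $\beta$ and (ii) a Simion-Schmidt-style encoding of the $123$-avoidance of $\alpha$ together with the $342$-avoidance of $\beta$. Conditions (a) and (b) should then become the statement that the emitted path is well-formed, while the two cross-conditions (c) and (d) would together force the path height to stay strictly below~$4$.

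The main obstacle is merging the two local encodings across the hole so that the single height bound captures both cross-conditions simultaneously; this requires aligning the notion of ``how many active values lie strictly between $\pi_a$ and some larger symbol to the right of $a$'' with the running height. If this proves too delicate, a fallback is to sum over the hole position~$j$: for each $j$, count pairs $(\alpha,\beta)$ of lengths $j-1$ and $n-j$ satisfying (a)--(d) via a Catalan-type convolution, then collapse the sum using standard identities of the form $\sum_{p+q=m}\binom{2p}{p}\binom{2q}{q}\cdots$ to recover $\binom{2n-2}{n-1}-\binom{2n-2}{n-5}$.
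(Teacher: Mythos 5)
Your structural characterization is correct, and it coincides exactly with the paper's Observation~\ref{obs-1342}: your conditions for the hole playing the roles of ``2'' and ``1'' are its conditions 1 and 2 (your ``$\beta$ avoids $342$'' is the same as ``avoids $231$,'' since $342$ standardizes to $231$); your role-of-``4'' condition is its condition 4, and your role-of-``3'' condition is equivalent to its condition 3 (the elements of the right part exceeding the left minimum form an increasing sequence). Your case-split showing the four conditions also kill hole-free occurrences is sound, and your arithmetic reading of the answer is right: by reflection, $\binom{2n-2}{n-1}-\binom{2n-2}{n-5}$ counts $\pm1$-paths from $(0,0)$ to $(2n-2,0)$ that never reach height~$4$.

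The gap is in the enumeration phase, which is a plan rather than a proof, and the hard step is precisely the one you flag and defer. The proposed rank-by-rank path encoding is never constructed, and there is no argument that conditions (c) and (d) jointly translate into the single height bound; note that the paper itself lists a combinatorial proof of this formula as open problem 1 in Section~\ref{sec:further}, so this bijection is not a routine verification one may leave to the reader. The fallback is also not viable as stated: conditions (c) and (d) couple the \emph{values} of $\alpha$ and $\beta$ across the hole, so for a fixed hole position $j$ the number of valid pairs $(\alpha,\beta)$ is not the product of a $123$-avoidance count and a $231$-avoidance count, and no convolution of the shape $\sum_{p+q=m}\binom{2p}{p}\binom{2q}{q}$ applies directly; one must stratify by how the values interleave around the left minimum. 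That stratification is exactly what the paper's proof supplies: it refines the four conditions into two explicit shapes --- (i) if the right part is increasing, $\pi=B_1B_2\dotsb B_{k+1}\di a_k\dotsb a_1$ with $B_1>a_1>B_2>\dotsb>a_k>B_{k+1}$ and each $B_i$ avoiding $123$, contributing $xC(x)/(1-xC(x))$; (ii) otherwise a finer decomposition $B_1\dotsb B_kDC\di AaBa_k\dotsb a_1$ contributing $x^2C^3(x)(C(x)-1)/(1-xC(x))$ --- and then simplifies the sum via $xC^2(x)=C(x)-1$ to $(C(x)-1)(C^2(x)-2C(x)+2)$, from which the binomial formula follows. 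To complete your argument you would need either to carry out such a decomposition (rediscovering the paper's cases), or to actually exhibit the height-bounded-path bijection and prove the bound; as written, neither is done.
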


\begin{proof}
  The following observation, coming directly from the definitions,
  characterizes the avoidance of 1342 in partial permutations with a
  single hole.
  \begin{observation}\label{obs-1342}
    A partial permutation $\pi= \pi_1 \pi_2\dotsb \pi_{j-1}\di\pi_{j+1}\dotsb
    \pi_n$ avoids the pattern 1342 if and only if it satisfies the
    following conditions (see also Figure~\ref{figure:avoids1342}):
    \begin{enumerate}
    \item The left part of $\pi$ avoids 123.
    \item The right part of $\pi$ avoids 231.
\item The elements in the right part bigger than the left minimum form an increasing sequence.
\item If $a<b<c$ are three numbers such that $b$ is in the right part of $\pi$ while $a$ and $c$ are in the left part, then $c$ must appear to the left of~$a$ in~$\pi$.
\end{enumerate}
Consequently, the structure of $\pi$ is described by one of the following two cases.
\begin{itemize} 
    \item[(i)] If the right part of $\pi$ is an increasing (possibly empty)
      sequence, then the left part of $\pi$ consists of a decreasing
      sequence of 123-avoiding
      possibly empty blocks as shown on the upper picture in
      Figure~\ref{figure:avoids1342}. 
      Assuming that the right part is of size $k$, $\pi$ can be
      decomposed as $B_1B_2\dotsb B_{k+1}\di a_ka_{k-1}\dotsb a_1$
      with $B_1>a_1>B_2>a_2>\cdots > a_k> B_{k+1}$, where $B_i$ is
      a possibly empty 123-avoiding permutation, for $1\leq i\leq
      k+1$.
    \item[(ii)] 
Suppose the right part of $\pi$ is not an increasing sequence. Let $a$ be the 
smallest symbol in the right part of $\pi$ such that all the symbols in the 
right part greater or equal to $a$ form an increasing sequence (see the lower 
picture in Figure~\ref{figure:avoids1342}). Assuming there are $k$ elements 
greater than $a$ in the right part, $\pi$ can be written as $B_1B_2\dotsb
B_kDC\di AaBa_ka_{k-1}\dotsb a_1$, where $D$, $C$, and all the $B_i$ are 
possibly empty 123-avoiding permutations (we distinguish $D$ and $C$ from the 
$B_i$'s for enumeration arguments below), $A$ and $B$ are 231-avoiding 
permutations with $B$ non-empty, such that
      $$B_1>a_1>B_2>a_2>\cdots >B_k>a_k>D>a>C>B>A.$$
    \end{itemize}
  \end{observation}

  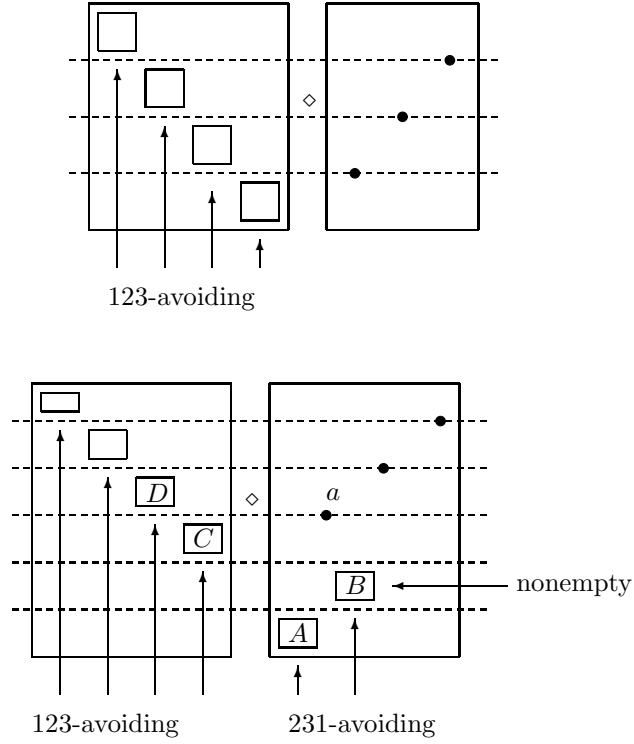
\begin{figure}
    \begin{center}
      \setlength{\unitlength}{2.5mm}
      \begin{picture}(23,16)
        \put(1,4){\line(1,0){10.5}}
        \put(1,4){\line(0,1){12}}
        \put(1,16){\line(1,0){10.5}}
        \put(11.5,4){\line(0,1){12}}
        \put(15,7){\circle*{\dotdiam}}
        \put(17.5,10){\circle*{\dotdiam}}
        \put(20,13){\circle*{\dotdiam}}
        \put(12.25,10.5){\di}
        \multiput(0,7)(0.6,0){38}{\line(1,0){0.3}}
        \multiput(0,10)(0.6,0){38}{\line(1,0){0.3}}
        \multiput(0,13)(0.6,0){38}{\line(1,0){0.3}}
        \put(13.5,4){\line(1,0){8}}
        \put(13.5,4){\line(0,1){12}}
        \put(13.5,16){\line(1,0){8}}
        \put(21.5,4){\line(0,1){12}}
        \newsavebox{\block}
        \savebox{\block}(2,2)[l]{%
          \put(0,0){\line(1,0){2}}
          \put(0,0){\line(0,1){2}}
          \put(0,2){\line(1,0){2}}
          \put(2,0){\line(0,1){2}}}
        \put(1.5,13.5){\usebox{\block}}
        \put(4,10.5){\usebox{\block}}
        \put(6.5,7.5){\usebox{\block}}
        \put(9,4.5){\usebox{\block}}

        \put(2,0){$123$-avoiding}
        \put(2.5,2){\vector(0,1){10.5}}
        \put(5,2){\vector(0,1){7.5}}
        \put(7.5,2){\vector(0,1){4}}
        \put(10,2){\vector(0,1){1.5}}
      \end{picture}
      \vskip10mm
      \begin{picture}(29,18.5)
        \put(1,4){\line(1,0){10.5}}
        \put(1,4){\line(0,1){14.5}}
        \put(1,18.5){\line(1,0){10.5}}
        \put(11.5,4){\line(0,1){14.5}}
        \put(12.25,12){\di}
        \multiput(0,6.5)(0.6,0){42}{\line(1,0){0.3}}
        \multiput(0,9)(0.6,0){42}{\line(1,0){0.3}}
        \multiput(0,11.5)(0.6,0){42}{\line(1,0){0.3}}
        \multiput(0,14)(0.6,0){42}{\line(1,0){0.3}}
        \multiput(0,16.5)(0.6,0){42}{\line(1,0){0.3}}
        \put(13.5,4){\line(1,0){10}}
        \put(13.5,4){\line(0,1){14.5}}
        \put(13.5,18.5){\line(1,0){10}}
        \put(23.5,4){\line(0,1){14.5}}
        \put(16.5,11.5){\circle*{\dotdiam}}
        \put(19.5,14){\circle*{\dotdiam}}
        \put(22.5,16.5){\circle*{\dotdiam}}

        \savebox{\block}(2,1.5)[l]{%
          \put(0,0){\line(1,0){2}}
          \put(0,0){\line(0,1){1.5}}
          \put(0,1.5){\line(1,0){2}}
          \put(2,0){\line(0,1){1.5}}}
        \put(14,4.5){\usebox{\block}}
        \put(17,7){\usebox{\block}}
        \put(9,9.5){\usebox{\block}}
        \put(4,14.5){\usebox{\block}}
        \put(6.5,12){\usebox{\block}}
        \put(1.5,17){\line(1,0){2}}
        \put(1.5,17){\line(0,1){1}}
        \put(1.5,18){\line(1,0){2}}
        \put(3.5,17){\line(0,1){1}}

        \put(14.5,4.8){$A$}\put(17.5,7.3){$B$}\put(9.5,9.8){$C$}
        \put(7,12.2){$D$}\put(16.5,12.3){$a$}
        \put(1,0){$123$-avoiding}
        \put(2.5,2){\vector(0,1){14}}
        \put(5,2){\vector(0,1){11.5}}
        \put(7.5,2){\vector(0,1){9}}
        \put(10,2){\vector(0,1){6.5}}
        \put(14.5,0){$231$-avoiding}
        \put(15,2){\vector(0,1){1.5}}
        \put(18,2){\vector(0,1){4}}
        \put(26.5,7.5){nonempty}
        \put(26,7.75){\vector(-1,0){6}}
      \end{picture}
      \caption{Two possible structures of partial permutations with one hole that
        avoid 1342.\label{figure:avoids1342}}
    \end{center}
  \end{figure}

  Using Observation~\ref{obs-1342}, we will derive a closed-form formula for
  the generating function $\sum_{n\ge 1} s_n^1(1342)x^n$.

It is known that $C(x)$ is the generating function for 
123-avoiding permutations, as well as for 231-avoiding permutations. The partial 
permutations considered in case (i) of
  Observation~\ref{obs-1342} then have the generating function $x\sum_{k\ge 0} x^k 
C^{k+1}(x)=xC(x)/(1-xC(x))$. Note that a factor $x$ in the previous expression 
corresponds to the hole in the partial permutation.     

  On the other hand, the generating function corresponding to case (ii) in
  Observation~\ref{obs-1342} is
  $$\frac{x^2C^3(x)(C(x)-1)}{1-xC(x)}$$ where in the numerator, one
  $x$ corresponds to the hole, the other $x$ corresponds to $a$; $C(x)-1$
  corresponds to the nonempty $B$; $C^3(x)$ corresponds to $A$,
  $C$, and $D$.

  We now sum the two functions and use the well-known relation
  $xC^2(x)=C(x)-1$ to simplify the
  obtained expression:
\begin{align*}
  \frac{x^2C^3(x)(C(x)-1)+xC(x)}{1-xC(x)}&=\frac{x^2C^4(x)(C(x)-1)+xC^2(x)}{C(x)-xC^2(x)}\\
  &=x(C(x)-1)^2C^2(x)+C(x)-1\\
  &=(C(x)-1)(C^2(x)-2C(x)+2).
\end{align*}
From this, we get $s_n^1(1342)=\binom{2n-2}{n-1}-\binom{2n-2}{n-5}$,
corresponding to sequence A026029 in~\cite{sloane} with indices shifted by one.
\end{proof}

\begin{theorem}\label{enum3}
  For $n\geq 1$, we have $s_n^1(2413)=\frac{2}{n+1}\binom{2n}{n}-2^{n-1}$.
\end{theorem}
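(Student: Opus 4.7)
My plan is to follow the same pattern as Theorem~\ref{enum2}: first establish a structural observation characterizing the 2413-avoiding partial permutations $\pi = L\di R \in \sym_n^1$, then derive the enumeration by a generating function argument. A 2413-occurrence in some extension of $\pi$ either uses four non-hole positions (case~(a)) or uses the hole in one of the four roles of 2413 (cases (b1)--(b4)). I would dispatch the four hole cases by direct inspection, using that the hole may take any rank: the hole as the entry~$2$ creates a 2413 iff $R$ contains 312; the hole as the entry~$3$ iff $L$ contains 231; the hole as the entry~$4$ iff some value of $V_L$ lies strictly between an increasing pair (in positional order) of values of $V_R$; and the hole as the entry~$1$ iff some value of $V_R$ lies strictly between an increasing pair of values of $V_L$. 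Case~(a) splits according to how many pattern-positions lie in $L$; the splits $0$+$4$, $1$+$3$, $3$+$1$, and $4$+$0$ are all subsumed by the 231- and 312-avoidance of $L$ and $R$ respectively, while the $2$+$2$ split will be ruled out automatically once the straddling conditions are imposed, since the required quadruple $v_1<v_2<v_3<v_4$ with $v_2,v_4\in V_L$ in positional order and $v_3\in V_R$ would violate one of the straddling conditions.

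Next I would decompose the value sets: write $V_L = B_1 \sqcup \cdots \sqcup B_s$ and $V_R = C_1 \sqcup \cdots \sqcup C_t$ as unions of maximal blocks of consecutive integers, indexed so $B_1 < \cdots < B_s$ and $C_1 < \cdots < C_t$. The straddling conditions force $L$ to read as $L^{(s)} L^{(s-1)} \cdots L^{(1)}$, where each $L^{(k)}$ is a permutation of $B_k$; a short additional analysis using 231-avoidance of $L$ then forces $L^{(k)}$ to be decreasing for $k\ge 2$ and $L^{(1)}$ to be an arbitrary 231-avoiding permutation of $B_1$. Symmetrically, $R = R^{(t)} \cdots R^{(1)}$ with $R^{(k)}$ decreasing for $k<t$ and $R^{(t)}$ an arbitrary 312-avoiding permutation of $C_t$. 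Consequently, for every binary coloring of $[n-1]$ into $V_L \sqcup V_R$, the number of corresponding 2413-avoiding partial permutations is $C_{|B_1|}\cdot C_{|C_t|}$, with the convention $C_0=1$ when one side is empty.

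The enumeration then reduces to a sum over binary strings: introduce $a(x) = \sum_{s \in \{L,R\}^*} C_{\ell(s)} C_{r(s)} x^{|s|}$, where $\ell(s)$ is the length of the first $L$-run of $s$ (or $0$ if $s$ has no $L$) and $r(s)$ the length of the last $R$-run. Decomposing strings into (i) empty, (ii) all-$R$ of length $\ge 1$, (iii) all-$L$ of length $\ge 1$, (iv) of the form $R^\alpha L^\beta$ with $\alpha,\beta\ge 1$, and (v) strings in which the first $L$-run strictly precedes the last $R$-run (the middle factor being either empty or a string beginning with $R$ and ending with $L$, hence with generating function $(1-x)^2/(1-2x)$), I would obtain
\[
a(x) = C(x)^2 + \frac{(C(x)-1)^2}{1-2x}.
\]
Multiplying by $x$ and using the Catalan identity $xC(x)^2 = C(x)-1$ simplifies $xa(x)$ to $2(C(x)-1) - x/(1-2x)$, whose coefficient of $x^n$ is $2C_n - 2^{n-1}$. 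The main obstacle will be the structural characterization---in particular, handling the four hole-as-entry cases cleanly and verifying that the $2$+$2$ split of case~(a) is subsumed by the straddling conditions; the generating function manipulation afterwards is routine.
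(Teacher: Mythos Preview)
Your proof is correct and follows essentially the same route as the paper: your conditions (b1)--(b4) are exactly the paper's conditions (1)--(4) in Observation~\ref{obs-2413}, your block decomposition of $V_L$ and $V_R$ with $L^{(1)}$ (resp.\ $R^{(t)}$) arbitrary $231$- (resp.\ $312$-)avoiding and all other blocks decreasing matches the paper's structural description, and your string cases (iv)/(v) correspond precisely to the paper's cases (i)/(ii). The one presentational difference is that you encode the value-interleaving as a binary string and read off the ``middle'' factor $(1-x)^2/(1-2x)$ directly, whereas the paper writes out the alternating block structure explicitly and sums a geometric series; both computations yield $x\,a(x)=2(C(x)-1)-x/(1-2x)$.
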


\begin{proof}
  The following observation, coming directly from the definitions,
  characterizes the avoidance of 2413 in partial permutations with a
  single hole.
  \begin{observation}\label{obs-2413}
    A partial permutation $\pi= \pi_1 \pi_2\dotsb \pi_{j-1}\di\pi_{j+1}\dotsb
    \pi_n$ avoids the pattern 2413 if and only if it satisfies the
    following conditions (see also Figure~\ref{figure:avoids2413}):
    \begin{enumerate}
    \item The left part of $\pi$ avoids 231.
    \item The right part of $\pi$ avoids 312.
\item If $a<b<c$ are three numbers such that $a$ and $c$ appear in the left part of $\pi$ while $b$ 
is in the right part, then $c$ appears to the left of~$a$.
\item If $a<b<c$ are three numbers such that $a$ and $c$ appear in the right part of $\pi$ while $b$ appears in the left part, then $c$ appears to the left of~$a$.
\end{enumerate}
It follows that if both parts of $\pi$ are nonempty, then $\pi$ has one of the following two forms.
\begin{itemize}
\item[(i)] The left minimum is larger than the right maximum, i.e., $\pi=A\di B$ with $A>B$ (see the right picture in Figure~\ref{figure:avoids2413}).

\item[(ii)] Otherwise, the left and the right parts of $\pi$ must consist of decreasing
      sequences of blocks as shown in Figure~\ref{figure:avoids2413} (the left
      picture) where $A$ and $B$ are nonempty; $A$ (resp. $B$) is an
      arbitrary 231- (resp. 312-)avoiding permutation, and the remaining blocks are nonempty decreasing sequences. Moreover, in
      the places indicated by stars, we have possibly empty decreasing
      permutations. Formally speaking, in this case, $\pi$ can be
      decomposed as $\pi=C_0C_1\ldots C_kA\di BD_1D_2\ldots D_{k+1}$
      for some $k$ so that $$C_0>B>C_1>D_1>C_2>D_2>\cdots
      >C_k>D_k>A>D_{k+1},$$ $C_i$'s and $D_i$'s are decreasing
      sequences, $C_0$ and $D_{k+1}$ may be empty, and $A$ and
      $B$ are as described above.
    \end{itemize}
  \end{observation}

  \begin{figure}
    \begin{center}
      \setlength{\unitlength}{3mm}
      \begin{picture}(18,24)
        \put(2,6){\line(1,0){6}}
        \put(2,6){\line(0,1){12}}
        \put(2,18){\line(1,0){6}}
        \put(8,6){\line(0,1){12}}
        \put(10,6){\line(1,0){6}}
        \put(10,6){\line(0,1){12}}
        \put(10,18){\line(1,0){6}}
        \put(16,6){\line(0,1){12}}

        \newsavebox{\hcara}
        \savebox{\hcara}(18,0.1)[l]{%
          \multiput(0,0)(0.6,0){30}{\line(1,0){0.3}}}
        \put(0,8){\usebox{\hcara}}
        \put(0,10){\usebox{\hcara}}
        \put(0,12){\usebox{\hcara}}
        \put(0,14){\usebox{\hcara}}
        \put(0,16){\usebox{\hcara}}
        \newsavebox{\vcara}
        \savebox{\vcara}(0.1,16)[l]{%
          \multiput(0,0)(0,0.6){28}{\line(0,1){0.3}}}
        \put(4,4){\usebox{\vcara}}
        \put(6,4){\usebox{\vcara}}
        \put(12,4){\usebox{\vcara}}
        \put(14,4){\usebox{\vcara}}

        \qbezier(0,3)(0,2.5)(0.5,2.5)
        \put(0.5,2.5){\line(1,0){3}}
        \qbezier(3.5,2.5)(4,2.5)(4,2)
        \qbezier(4,2)(4,2.5)(4.5,2.5)
        \put(4.5,2.5){\line(1,0){3}}
        \qbezier(7.5,2.5)(8,2.5)(8,3)
        \put(1,0){avoids $231$}
        \qbezier(10,3)(10,2.5)(10.5,2.5)
        \put(10.5,2.5){\line(1,0){3}}
        \qbezier(13.5,2.5)(14,2.5)(14,2)
        \qbezier(14,2)(14,2.5)(14.5,2.5)
        \put(14.5,2.5){\line(1,0){3}}
        \qbezier(17.5,2.5)(18,2.5)(18,3)
        \put(11,0){avoids $312$}
        \qbezier(0,21)(0,21.5)(0.5,21.5)
        \put(0.5,21.5){\line(1,0){8}}
        \qbezier(8.5,21.5)(9,21.5)(9,22)
        \qbezier(9.5,21.5)(9,21.5)(9,22)
        \put(9.5,21.5){\line(1,0){8}}
        \qbezier(17.5,21.5)(18,21.5)(18,21)
        \put(6,23){avoids $2413$}

        \put(8.5,10.5){{\LARGE{\di}}}
        \put(16.5,4){{\LARGE{*}}}
        \put(0.5,18){{\LARGE{*}}}
        \savebox{\block}(1.5,1.5)[l]{%
          \put(0,0){\line(1,0){1.5}}%
          \put(0,0){\line(0,1){1.5}}%
          \put(0,1.5){\line(1,0){1.5}}%
          \put(1.5,0){\line(0,1){1.5}}}
        \put(6.25,6.25){\usebox{\block}}
        \put(6.5,6.5){$A$}
        \put(10.25,16.25){\usebox{\block}}
        \put(10.5,16.5){$B$}
        \savebox{\block}(1.5,1.5)[l]{%
          \put(0,0){\line(1,0){1.5}}%
          \put(0,0){\line(0,1){1.5}}%
          \put(0,1.5){\line(1,0){1.5}}%
          \put(0,1.5){\line(1,-1){1.5}}%
          \put(1.5,0){\line(0,1){1.5}}}
        \put(4.25,10.25){\usebox{\block}}
        \put(2.25,14.25){\usebox{\block}}
        \put(12.25,12.25){\usebox{\block}}
        \put(14.25,8.25){\usebox{\block}}
      \end{picture}
      \hskip15mm
      \begin{picture}(12,24)
        \put(0,7){\line(1,0){5}}
        \put(0,7){\line(0,1){10}}
        \put(0,17){\line(1,0){5}}
        \put(5,7){\line(0,1){10}}
        \put(7,7){\line(1,0){5}}
        \put(7,7){\line(0,1){10}}
        \put(7,17){\line(1,0){5}}
        \put(12,7){\line(0,1){10}}
        \put(5.5,11.5){{\LARGE{\di}}}
        \savebox{\block}(4,4)[l]{%
          \put(0,0){\line(1,0){4}}%
          \put(0,0){\line(0,1){4}}%
          \put(0,4){\line(1,0){4}}%
          \put(4,0){\line(0,1){4}}}
        \put(0.5,12.5){\usebox{\block}}
        \put(7.5,7.5){\usebox{\block}}
        \put(2,14){$A$}
        \put(9,9){$B$}
        \qbezier(0,6)(0,5.5)(0.5,5.5)
        \put(0.5,5.5){\line(1,0){1.5}}
        \qbezier(2,5.5)(2.5,5.5)(2.5,5)
        \qbezier(2.5,5)(2.5,5.5)(3,5.5)
        \put(3,5.5){\line(1,0){1.5}}
        \qbezier(4.5,5.5)(5,5.5)(5,6)
        \put(0,3){avoids $231$}
        \qbezier(7,6)(7,5.5)(7.5,5.5)
        \put(7.5,5.5){\line(1,0){1.5}}
        \qbezier(9,5.5)(9.5,5.5)(9.5,5)
        \qbezier(9.5,5)(9.5,5.5)(10,5.5)
        \put(10,5.5){\line(1,0){1.5}}
        \qbezier(11.5,5.5)(12,5.5)(12,6)
        \put(7,3){avoids $312$}
        \qbezier(0,18)(0,18.5)(0.5,18.5)
        \put(0.5,18.5){\line(1,0){5}}
        \qbezier(5.5,18.5)(6,18.5)(6,19)
        \qbezier(6,19)(6,18.5)(6.5,18.5)
        \put(6.5,18.5){\line(1,0){5}}
        \qbezier(11.5,18.5)(12,18.5)(12,18)
        \put(3,20){avoids $2413$}
      \end{picture}
      \caption{Two possible structures of partial permutations with one hole that
        avoid 2413.\label{figure:avoids2413}}
    \end{center}
  \end{figure}
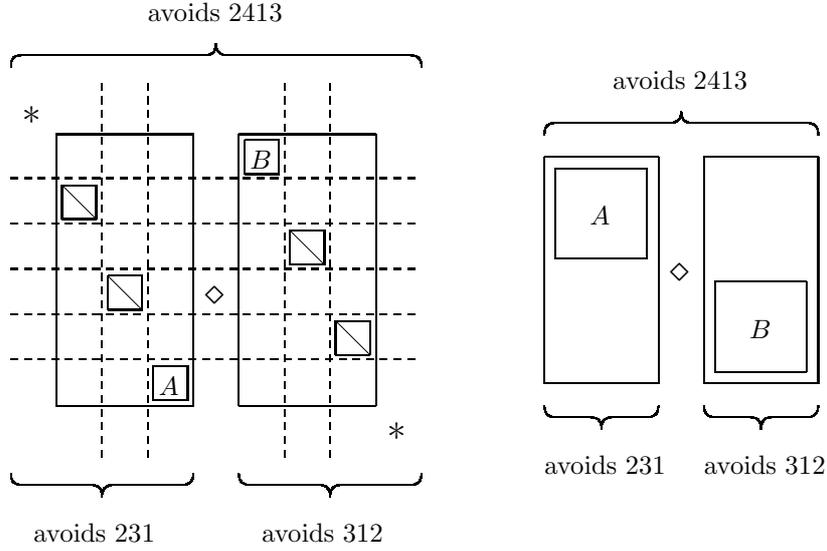

  Let us derive the generating function based on Observation~\ref{obs-2413}.

  If both the left and the right parts of $\pi$ are empty, the
      corresponding generating function is $x$; if exactly one of the parts is empty,
      the generating function is $x(C(x)-1)$. Together, these cases have
      generating function $x(2C(x)-1)$. In what follows, we assume the parts
      are not empty.

The generating function for case (i) in Observation~\ref{obs-2413} is
clearly $x(C(x)-1)^2$.

Consider case (ii) in Observation~\ref{obs-2413}. The generating function for a nonempty
decreasing block is
      $\frac{x}{1-x}$, whereas the generating function for a possibly empty such
      block is $\frac{1}{1-x}$.  Thus, since the number of decreasing
      blocks in the left part is the same as that in the right part
      (not counting the places indicated by the stars), the number of
      partial permutations in this case  has the following generating function (an
      extra $x$ corresponds to the hole):
      $$\frac{x}{(1-x)^2}\frac{(C(x)-1)^2}{1-\left(\frac{x}{1-x}\right)^2}
      =\frac{x(C(x)-1)^2}{1-2x}.
      $$

  Summing the cases above, we see that the generating function for $s_n^1(2413)$ is
  $$x(2C(x)-1)+x( C(x)-1)^2+ \frac{x( C(x)-1)^2}{1-2x}=2C(x)-\frac{x}{1-2x}-2,
  $$ which gives
  $s_n^1(2413)=\frac{2}{n+1} \binom{2n}{n}-2^{n-1}$.
\end{proof}

\subsection{Bijective proof of Theorem~\ref{enum1}}\label{sec:bij} 

Theorem~\ref{enum1} states that $s_n^1(1234)=\binom{2n-2}{n-1}$. We
provide a bijective proof of this fact here.

\begin{theorem} 
  There is a bijection between partial permutations of length $n$ with one
  hole that avoid 1234, and the set of all lattice paths from $(0,0)$ to
  $(2n-2,0)$ with steps $(1,1)$ and $(1,-1)$.
\end{theorem}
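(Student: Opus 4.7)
The plan rests on the identity $\binom{2n-2}{n-1}=n\cdot C_{n-1}$, which suggests viewing a lattice path as a Dyck path of semilength $n-1$ together with an extra $n$-fold choice. The bijection will be constructed in three stages: first reduce $1234$-avoiding partial permutations to pairs (123-avoiding permutation, hole position), then pass to Dyck paths via a classical bijection, and finally convert (Dyck path, index in $[n]$) to arbitrary lattice paths via a cycle-lemma argument.

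For the first stage, I would use the observation underlying equation~\eqref{eq:monotone}: every $\pi \in \sym_n^1(1234)$ is uniquely determined by the position $j\in\{1,\dotsc,n\}$ of its hole together with the permutation $\sigma\in\sym_{n-1}$ obtained by deleting the hole, and $\pi$ avoids $1234$ iff $\sigma$ avoids $123$. This gives a bijection
\[
\sym_n^1(1234)\longleftrightarrow \sym_{n-1}(123)\times\{1,\dotsc,n\}.
\]
For the second stage, I would invoke any of the standard bijections $\phi\colon \sym_{n-1}(123)\to\mathcal{D}_{n-1}$ between $123$-avoiding permutations and Dyck paths of semilength $n-1$ (both counted by $C_{n-1}$).

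The heart of the proof is the third stage: a bijection
\[
\mathcal{D}_{n-1}\times\{1,\dotsc,n\}\longleftrightarrow \mathcal{L}_{n-1},
\]
where $\mathcal{L}_{n-1}$ is the set of all lattice paths from $(0,0)$ to $(2n-2,0)$ with unit steps $(1,1)$ and $(1,-1)$. Given $(D,j)$, form the strictly positive path $Q=U\cdot D$ of length $2n-1$, which has $n$ up-steps; cyclically rotate $Q$ so that the $j$-th up-step (from the left) moves to the final position, then drop this final up-step to obtain $P\in\mathcal{L}_{n-1}$. Conversely, given any $P\in\mathcal{L}_{n-1}$, append an up-step to form $P^+$ of length $2n-1$ ending at height $+1$; by the Dvoretzky--Motzkin cycle lemma, exactly one of the $2n-1$ cyclic rotations of $P^+$ is strictly positive, and that rotation has the form $U\cdot D$ for a unique $D\in\mathcal{D}_{n-1}$, with $j$ recording which of the $n$ up-steps of $U\cdot D$ is the appended one.

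The main obstacle is verifying that Stage~3 really is a bijection. The naive attempt ``rotate the lattice path to put its minimum at the end'' does \emph{not} work, because the fibers over a Dyck path then depend on how many times the Dyck path touches the $x$-axis. The cycle-lemma trick of appending a single up-step sidesteps this by replacing a net-zero sequence with a net-$+1$ sequence, for which the cycle lemma guarantees a unique strictly positive rotation; the $n$-to-one correspondence then arises uniformly from the $n$ up-steps of $U\cdot D$, each of which can play the role of the appended step. The arithmetic check $n\cdot C_{n-1}=\binom{2n-2}{n-1}$ confirms the total counts match.
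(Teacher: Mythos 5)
Your proposal is correct and takes essentially the same route as the paper: the paper likewise deletes the hole to obtain a $123$-avoiding permutation (the observation behind equation~\eqref{eq:monotone}), maps it to a Dyck path of semilength $n-1$, and then performs exactly this append-one-step-and-cyclically-rotate construction, inverting it by locating the leftmost minimum of the augmented path. The only difference is an up--down reflection: the paper appends a down-step, so the hole position indexes the $n$ down-steps and uniqueness of the rotation is argued via the leftmost minimum, whereas you append an up-step and invoke the Dvoretzky--Motzkin cycle lemma --- the same argument in mirror image.
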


\begin{proof} 
  Our proof is based on a known bijective proof of the fact that the
  number of Dyck paths of length $2n$ is given by the $n$-th Catalan
  number $\frac{1}{n+1}\binom{2n}{n}$.

  Let $\pi\in \sym_n^1(1234)$ and let the hole be in position $i$, $1\leq i\leq
  n$.  Remove the hole and map the obtained 123-avoiding permutation of
  length $n-1$, using any of your favorite bijections~\cite{CK} to a
  Dyck path $P$ from $(0,0)$ to $(2n-2,0)$. Now add a down-step at the
  end of $P$. Thus, $P$ has $n$ down-steps and $n-1$ up-steps. Cut $P$
  into two parts: $P_1$ is the (nonempty) part of all steps to the
  left, but {\em not} including the $i$-th down-step, and $P_2$ is the
  remaining, (nonempty) part. Move $P_2$ so that it starts from $(0,0)$
  and append $P_1$ to it. We now have a path from $(0,0)$ to $(2n-1,-1)$
  inducing, in an injective way, the desired path of length $2n-2$
  from $(1,-1)$ to $(2n-1,-1)$.

  The reverse to the procedure above is easy to see: append an extra
  down-step to the left of a given path from $(0,0)$ to $(2n-2,0)$ and
  shift the obtained path to start at $(0,0)$; find the leftmost minimum
  of the new path, cutting it into two parts and reassembling. Thus we
  get a bijection. In Figure \ref{figure:bijection} we provide an
  example using Krattenthaler's bijection from 123-avoiding
  permutations to Dyck paths (see~\cite{CK}).
\end{proof}

\setlength{\unitlength}{2mm}
\newcommand{\vertexdiam}{0.4}
\newcounter{cx}
\setcounter{cx}{0}
\newcounter{cy}
\setcounter{cy}{0}
\newcommand{\lineup}{%
  \put(\thecx,\thecy){\circle*{\vertexdiam}}%
  \put(\thecx,\thecy){\line(1,1){1}}%
  \addtocounter{cx}{1}\addtocounter{cy}{1}}
\newcommand{\linedown}{%
  \put(\thecx,\thecy){\circle*{\vertexdiam}}%
  \put(\thecx,\thecy){\line(1,-1){1}}%
  \addtocounter{cx}{1}\addtocounter{cy}{-1}}
\newcommand{\setcxyto}[2]{\setcounter{cx}{#1}\setcounter{cy}{#2}}
\newcommand{\finalpt}{\put(\thecx,\thecy){\circle*{\vertexdiam}}}
\begin{figure}
\begin{center}
\begin{picture}(52, 29)
  \put(4, 22){5 4 2 \di{} 8 7 6 1 3}
  \put(24,22){\vector(1,0){4}}
  \put(24,5){\vector(1,0){4}}
  \put(28,16){\vector(-1,-1){4}}
  \put(31, 22){\vector(1, 0){21}}
  \put(32, 17){\vector(0, 1){12}}
  \setcxyto{32}{22}
  \lineup \lineup \lineup \lineup \linedown \lineup
  \linedown \lineup \linedown \linedown \linedown
  \lineup \lineup \linedown \linedown \linedown \linedown
  \finalpt
  \multiput(41,17.5)(0,1){11}{\line(0,1){0.5}}

  \put(0, 5){\vector(1, 0){21}}
  \put(1, 0){\vector(0, 1){12}}
  \setcxyto{1}{5}
  \linedown \linedown \lineup \lineup \linedown \linedown
  \linedown \linedown \lineup \lineup \lineup \lineup
  \linedown \lineup \linedown \lineup \linedown
  \finalpt

  \put(31, 5){\vector(1, 0){21}}
  \put(32, 0){\vector(0, 1){12}}
  \setcxyto{32}{5}
  \linedown \lineup \lineup \linedown \linedown \linedown
  \linedown \lineup \lineup \lineup \lineup \linedown
  \lineup \linedown \lineup \linedown
  \finalpt
\end{picture}
\caption{An example of a bijective map of $\pi\in \sym_9^1(1234)$ to a
lattice path from (0,0) to (16,0).\label{figure:bijection}}
\end{center}
\end{figure}
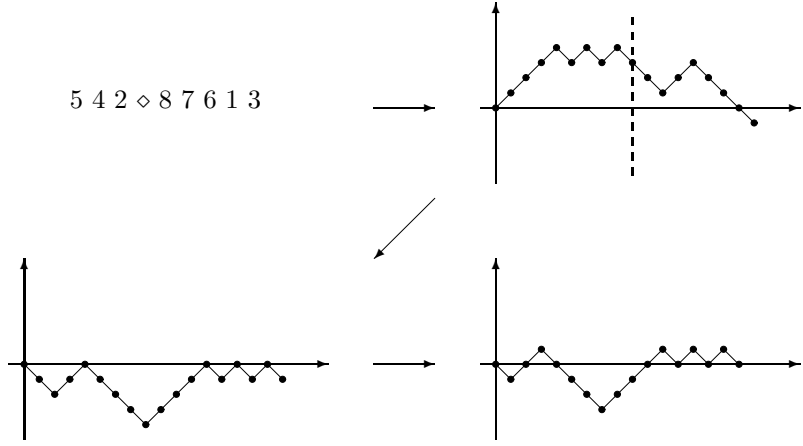

\section{Directions of further research}\label{sec:further}

We have shown that classical Wilf equivalence may be regarded as a special 
case in a hierarchy of $k$-Wilf equivalence relations, and that many 
properties previously established in the context of Wilf equivalence can be 
generalized to all the $k$-Wilf equivalences. In many situations, 
understanding the $k$-Wilf equivalence class of a given pattern $p$ becomes 
easier as $k$ increases. Consider, for example, the identity permutation 
$\id_\ell=123\dotsb\ell$. We know that the $(\ell-1)$-Wilf equivalence class of $\id_\ell$ 
contains every permutation of length $\ell$, and we have shown that the 
$(\ell-2)$-Wilf equivalence class of $\id_\ell$ contains 
exactly the Baxter permutations of length~$\ell$. What is the $(\ell-3)$-Wilf 
equivalence class of~$\id_\ell$? For $\ell=3$ and $\ell=4$ 
that class contains exactly the layered permutations of length~$\ell$. 
Computer enumeration suggests that the same is true for $\ell=5$. We do not know 
whether this behaviour generalizes to larger values of~$\ell$.


Another natural direction of further research is to extend known (general) 
results in permutation patterns theory to the setting of partial 
permutations. For example, it is natural to investigate the growth rate of 
$s_n^k(p)$ for a fixed $k$ and a fixed pattern $p$, with $n\to \infty$. In 
the setting of non-partial permutations, Marcus and Tardos~\cite{MaTa} have 
shown that for each pattern $p$, there is a constant $K_p$ (known as the 
Stanley--Wilf limit of~$p$), such that 
\[
\lim_{n\to\infty} \sqrt[n]{s^0_n(p)}= K_p \text{ or equivalently } s^0_n(p)=K_p^{n+o(n)}.
\]
For a pattern $p$ of length $\ell$, a result of Valtr cited by Kaiser and 
Klazar~\cite{kakl} shows that $K_p\ge \Omega(\ell^2)$, while the best known 
general upper bound, due to Cibulka~\cite{Cibulka}, is of order~$2^{O(\ell\log 
\ell)}$. It is also easy to get a lower bound $K_p\ge \ell-1$
(see~\cite[Page 167, exc. 33]{bona}).

We do not know whether the limit $\lim_{n\to\infty} \sqrt[n]{s_n^k(p)}$ 
exists for every $p$ and~$k$. We can, however, bound the growth of $s_n^k(p)$ 
in terms of the Stanley--Wilf limits of certain subpermutations of~$p$. To 
make this specific, let us introduce the following terminology: for two 
permutations $p\in\sym_n$ and $q\in\sym_m$, we say that $q$ is a 
\emph{consecutive subpattern of $p$} if for some $i\ge 0$ the consecutive 
subsequence $p_{i+1}, p_{i+2},\dotsc, p_{i+m}$ of $p$ is order-isomorphic 
to~$q$. 

\begin{theorem}\label{thm-number}
Let $k\ge0$ be an integer. Let $p$ be a permutation pattern of length $\ell$, 
with $\ell>k$. Let $q$ be the consecutive subpattern of $p$ of length 
$\ell-k$ chosen in such a way that its Stanley--Wilf limit $K_q$ is as large 
as possible. We then have the bounds
\[
K_q^{n+o(n)}\le s_n^k(p) \le (k+1)^n K_q^{n+o(n)}. 
\]
\end{theorem}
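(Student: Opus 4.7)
The plan is to establish the lower and upper bounds separately, each by reducing to a classical Stanley--Wilf estimate on $s_{n-k}^0$ of a carefully chosen pattern of length $\ell-k$.

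For the lower bound, choose $i\in\{0,\dotsc,k\}$ so that $q=p_{i+1}\dotsb p_{i+\ell-k}$, and for each $\tau\in\sym_{n-k}^0(q)$ form the partial permutation $\pi$ by placing $i$ holes before $\tau$ and $k-i$ holes after it. I claim $\pi$ avoids $p$: if some extension of $\pi$ realised $p$ at positions $j_1<\dotsb<j_\ell$, let $s\le i$ and $s'\le k-i$ count how many of those indices lie in the initial and trailing blocks of holes; the remaining $\ell-s-s'$ indices are non-hole positions, so their values in $\tau$ realise the consecutive subpattern $p_{s+1}\dotsb p_{\ell-s'}$ of $p$. Because $\{i+1,\dotsc,i+\ell-k\}\subseteq\{s+1,\dotsc,\ell-s'\}$, the block $q$ appears as a (classical) subpattern of $p_{s+1}\dotsb p_{\ell-s'}$, forcing $\tau$ to contain $q$---a contradiction. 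Distinct $\tau$'s give distinct $\pi$'s, so $s_n^k(p)\ge s_{n-k}^0(q)=K_q^{n+o(n)}$.

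For the upper bound, I would fix the hole set $H=\{h_1<\dotsb<h_k\}$ and, for a $p$-avoiding $\pi\in\sym_n^H$, analyse the block decomposition: the holes split the non-hole positions into $k+1$ blocks $G_0,\dotsc,G_k$ of sizes $n_0,\dotsc,n_k$ with $\sum n_a=n-k$, and the non-hole values of $\pi$ restricted to $G_a$ form a sequence $\sigma^{(a)}$. The key claim is that $\sigma^{(a)}$ avoids the pattern $q^{(a)}:=p_{a+1}\dotsb p_{a+\ell-k}$ for every $a$. Granting this, the number of valid $\pi$'s for the given $H$ is at most the number $\binom{n-k}{n_0,\dotsc,n_k}\le(k+1)^{n-k}$ of ways to distribute $[n-k]$ among the blocks, multiplied by $\prod_{a=0}^{k} s_{n_a}^0(q^{(a)})$; since $K_{q^{(a)}}\le K_q$ by the choice of $q$, this product is at most $K_q^{(n-k)+o(n)}$. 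Summing over the $\binom{n}{k}$ hole sets and collecting exponential factors gives $s_n^k(p)\le(k+1)^n K_q^{n+o(n)}$.

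To prove the key claim, suppose $\sigma^{(a)}$ contained $q^{(a)}$ at positions $r_1<\dotsb<r_{\ell-k}$ inside $G_a$. I would construct an extension of $\pi$ in which the $\ell$ positions $h_1,\dotsc,h_a,r_1,\dotsc,r_{\ell-k},h_{a+1},\dotsc,h_k$ realise $p$, contradicting $\pi\in\sym_n^H(p)$. An extension is parameterised by the $(n-k)$-subset $S\subseteq[n]$ used as non-hole values (which determines the increasing injection $f\colon[n-k]\to S$) together with an assignment of $[n]\setminus S$ to the hole positions. Requiring the $\ell-k$ middle values $f(u_j)$ to occupy the ranks $\{p_{a+1},\dotsc,p_{a+\ell-k}\}$ inside the $\ell$-tuple of extension values reduces to equations of the form $f(u_{(t)})=u_{(t)}+m_t-t$, where $u_{(1)}<\dotsb<u_{(\ell-k)}$ are the sorted non-hole values at the distinguished positions and $m_1<\dotsb<m_{\ell-k}$ are the sorted ranks; since both sequences are strictly increasing in $t$, the system is realised by a valid $S\subseteq[n]$ whenever $n\ge\ell$, and the hole values are then forced into the remaining ranks to complete the $p$-occurrence. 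The main obstacle is precisely this feasibility argument for the rank-matching system; the rest is routine counting using the asymptotic bound $s_m^0(q')\le K_{q'}^{m+o(m)}$, and small $n$ contribute only a polynomial overhead that is absorbed into the $o(n)$ term.
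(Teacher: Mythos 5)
Your proposal is correct and follows essentially the same route as the paper: the lower bound by padding a $q$-avoiding permutation with $i$ initial and $k-i$ trailing holes, and the upper bound by splitting a $p$-avoiding partial permutation at its holes into $k+1$ blocks, showing block $a$ avoids the consecutive subpattern $q^{(a)}=p_{a+1}\dotsb p_{a+\ell-k}$, and counting via the value-to-block assignment (at most $(k+1)^{n-k}$ choices) times Stanley--Wilf bounds, summed over the $\binom{n}{k}$ hole sets. The only difference is that you spell out, via the rank-matching construction $f(u_{(t)})=u_{(t)}+m_t-t$, the extension-feasibility step that the paper dismisses as ``clear,'' and your argument there is sound.
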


\begin{proof}      
Let us first prove the lower bound. Suppose that $p$ has an 
occurrence of $q$ at positions $i+1,i+2,\dotsc,i+\ell-k$, for some $i\ge 0$. 
Choose an $n\ge k$. Consider a partial permutation $\pi\in \sym_n^k$ that 
begins with $i$ holes, followed by a (non-partial) permutation $\pi'$ 
of length $n-k$, followed by $k-i$ holes. It is easy to see that 
$\pi$ avoids $p$ if and only if $\pi'$ avoids $q$, which means that 
$s_n^k(p)\ge s^0_{n-k}(q)$. This implies the desired lower bound.

To prove the upper bound, we fix an arbitrary $\varepsilon>0$, and we will 
show that $s_n^k(p)\le C \binom{n}{k} (k+1)^{n-k} (K_q+\varepsilon)^n$ for some 
$C$ depending on $p$, $k$ and $\varepsilon$, but not on~$n$. From this, the upper 
bound will easily follow.

Choose again an arbitrary $n\ge k$ and fix a set $H\subseteq[n]$ of size $k$. 
Let us estimate the size of $s_n^H(p)$. Let $i_1<i_2<\dotsb<i_k$ be the 
elements of~$H$. Let us also define $i_0=0$ and $i_{k+1}=n+1$. Each partial 
permutation~$\pi\in s_n^H(p)$ can be written as
\[
\pi=\pi^{(1)}\di\pi^{(2)}\di\dotsb\di\pi^{(k)}\di\pi^{(k+1)},
\]
where $\pi^{(j)}$ is a (possibly empty) subsequence of $\pi$ of length 
$n_j=i_j-i_{j-1}-1$ that does not contain any hole. Since $\pi$ avoids 
$p$, it is clear that $\pi^{(j)}$ must avoid the consecutive subpattern 
$q^{(j)}$ of $p$ that appears in $p$ at positions $j,j+1,\dotsc, j+\ell-k-1$. In 
other words, $\pi^{(j)}$ must be order-isomorphic to a $q^{(j)}$-avoiding 
permutation $\sigma^{(j)}$ of the set~$[n_j]$. 

Note that to describe a partial permutation $\pi\in s_n^H(p)$ uniquely, it 
is enough to specify for every $j\in[k+1]$ the $q^{(j)}$-avoiding permutation 
$\sigma^{(j)}$ of size $n_j$, and then, for each number $x\in[n-k]$, to 
specify which of the $k+1$ subsequences $\pi^{(j)}$ contains the value~$x$. 

Since each $q^{(j)}$ has Stanley--Wilf limit at most $K_q$, there is a 
constant $Q$ (depending on $p$, $k$ and $\varepsilon$) such that for every 
$m\in\NN$ and every $j\in[k+1]$, there are at most $Q(K_q+\varepsilon)^m$ 
permutations of $[m]$ that avoid~$q^{(j)}$. Thus, $s^H_n(p)\le 
Q^{k+1}(k+1)^{n-k}(K_q+\varepsilon)^n$. Since there are $\binom{n}{k}$ 
possibilities for $H$, we get the desired bound for~$s_n^k(p)$.
\end{proof}

We remark that for all the pattern-avoiding classes for which we can provide 
an enumeration, the limit $\lim_{n\to\infty} \sqrt[n]{s_n^k(p)}$ exists and is 
equal to the value $K_q$ from Theorem~\ref{thm-number}. This means that the 
lower bound from Theorem~\ref{thm-number} in general cannot be improved.

We close the section by summarizing the main open problems.

\begin{enumerate}
\item 
Find a combinatorial proof for the formulas for  $s_n^1(1342)$ and 
$s_n^1(2413)$ derived in Theorems~\ref{enum2} and~\ref{enum3}.
\item                                                                 
Which permutations are $k$-Wilf equivalent to~$\id_{k+3}$? Are they the 
layered permutations of length $k+3$? 
\item
We know that $s_n^H(\id_{k+3})=C_{n-k}$ for any $n\ge k\ge 0$ and any set 
$H\subseteq[n]$ of size $k$, where $C_m$ is the $m$-th Catalan number. Can we 
have $s_n^H(p)>C_{n-k}$ for some permutation $p$ of length $k+3$, some 
set $H$ of size $k$ and some~$n\ge k$? Can we even have 
$s_n^k(p)>s_n^k(\id_{k+3})$ for some $p\in \sym_{k+3}$?
\item 
Does the limit $\lim_{n\to\infty}\sqrt[n]{s_n^k(p)}$ exist for each $k$ and~$p$? 
Is the limit equal to the value $K_q$ defined in Theorem~\ref{thm-number}? Can the 
upper bound in Theorem~\ref{thm-number} be improved?
\end{enumerate}


\bibliography{papp}
\bibliographystyle{abbrv}

\end{document}